\newcommand{\R}{\mathcal{R}}
\newcommand{\Reals}{\mathbb{R}}
\newcommand{\Z}{\mathbb{Z}}
\newcommand{\Q}{\mathcal{Q}}
\newcommand{\C}{\mathbb{C}}
\newcommand{\D}{\mathcal{D}}
\newcommand{\B}{\mathcal{B}}
\newcommand{\F}{\mathcal{F}}
\newcommand{\K}{\mathcal{K}}
\renewcommand{\H}{\mathcal{H}}
\renewcommand{\P}{\mathcal{P}}
\newcommand{\E}{\mathcal{E}}
\newcommand{\X}{\mathfrak{X}}
\newcommand{\const}{\textnormal{const}}
\newcommand{\id}{\textnormal{id}}
\newcommand{\colim}{\textnormal{colim}}
\newcommand{\into}{\hookrightarrow}
\newcommand{\tr}{\textnormal{tr}}
\newcommand{\onto}{\twoheadrightarrow}
\newcommand{\M}{\mathcal{M}}
\renewcommand{\L}{\mathcal{L}}
\renewcommand{\S}{\mathcal{S}}
\newcommand{\ot}{\leftarrow}
\newcommand{\hocolim}{\textnormal{hocolim}}
\newcommand{\hofib}{\textnormal{hofib}}
\newcommand{\W}{\mathcal{W}}
\newcommand{\simp}{\textnormal{simp}}
\newcommand{\cone}{\textnormal{cone}}
\newcommand{\curlyC}{\mathcal{C}}
\renewcommand{\Re}{\textnormal{Re}}
\renewcommand{\tilde}{\widetilde}
\renewcommand{\bar}{\overline}
\newtheorem{Lemma}{Lemma}[section]
\newtheorem{Theorem}[Lemma]{Theorem}
\newtheorem*{Theorem*}{Theorem}
\newtheorem{Proposition}[Lemma]{Proposition}
\newtheorem{Corollary}[Lemma]{Corollary}
\theoremstyle{definition}
\newtheorem{Definition}[Lemma]{Definition}
\theoremstyle{remark}
\newtheorem{Remark}[Lemma]{Remark}
\newtheorem{Observation}[Lemma]{Observation}
\title{Comparison of Higher Smooth Torsion}
\author{Christopher Ohrt}
\begin{document}

\maketitle

\begin{abstract} By explicitly comparing constructions, we prove that the higher torsion invariants of smooth bundles defined by Igusa and Klein \cite{Igusa2} via Morse theory agree with the higher torsion invariants defined by Badzioch, Dorabiala, Dwyer, Weiss, and Williams using homotopy theoretical methods (\cite{zbMATH05551153} and \cite{zbMATH02115779}).
\end{abstract}  

\tableofcontents

\section{Introduction}

Higher torsion aims to provide an invariant classifying differential structures that can be put on a smooth bundle $F\into E\onto B.$ Many different approaches have been taken to define higher torsion invariants (\cite{Igusa2}, \cite{0793.19002}, \cite{0837.58028}, \cite{zbMATH02115779}, \cite{1071.58025}). In general a torsion invariant will take as argument the bundle $E$ and a finite, fiber-wise local system $\F\to E$ (where the action of $\pi_1 B$ on $H_*(F;\F)$ is sufficiently trivial) and produce a cohomology class $\tau(E;\F)\in H^{2k}(B;\mathbb{R}).$ To compare different torsion invariants, Igusa developed a system of axioms for such objects and used it to classify higher torsion in the case where $\F$ is trivial \cite{Igusa1}. The author expanded this to the more general ``twisted'' case of arbitrary (finite) $\F$ and finite $\pi_1 B$ \cite{Axioms}. Based on these axioms many comparisons were made \cite{zbMATH05848700}, \cite{Goette}.

Two particular constructions of higher torsion invariants were given by Igusa and Klein \cite{Igusa2} and Badzioch, Dorabiala, Dwyer, Weiss, and Williams \cite{zbMATH05551153}.  Given a parametrized, generalized Morse function $f:E\to \mathbb{R}$ the former construct their invariant $\tau^{IK}$ by carefully analyzing the evolution of critical Morse points in the fiber $F_x$ as $x\in B$ varies and explicitly give a map in K-Theory along which they pull back the Borel regulator \cite{MR0387496} to get the homology class $\tau_{IK}(E;\F)\in H^{2k}(B;\mathbb{R}).$ In contrast, the latter use homotopy theory to find a lift of the Becker-Gottlieb transfer $p^!:B\to \Omega^{\infty}\Sigma^\infty(E_+)$ into the fiber of the composition 
		$$\Omega^\infty\Sigma^\infty(E_+)\xrightarrow{\textnormal{assembly}} A(E)\xrightarrow{\textnormal{linearization}_\F}K(\C)$$
	and also pull back the Borel regulator to define their smooth torsion homology class $\tau_{sm}(E;\F)\in H^{2k}(B;\Reals).$ Originally this was done for trivial local systems and then the author extended it to finite local systems \cite{twistedsmooth}. 
	
The explicit nature of Igusa-Klein torsion opens it up for calculations on $S^1$-bundles and many more calculations can be accessed via the axioms \cite{Igusa2}. The natural definition of smooth torsions makes this a very intuitive and universal tool, but is prohibitive to calculations. To the author's knowledge, there are currently no non-trivial results. Using the axioms, Badzioch, Dorabiala, Klein, and Williams showed that smooth torsion is a multiple of Igusa-Klein torsion related by a non-zero factor if restricted to trivial local systems. The exact value of said factor was not determined \cite{zbMATH05848700}. In \cite{twistedsmooth} the author shows that for general (finite) local systems, smooth torsion still satisfies almost all axioms, with the obstruction being the continuity axiom which requires an explicit calculation of smooth torsion for $S^1$-bundles.

In this paper we will prove directly:

\begin{Theorem}\label{mainthm} Igusa-Klein and smooth torsion agree, whenever they are defined:
	$$\tau^{IK}(E;\F)=\tau^{sm}(E;\F).$$
\end{Theorem}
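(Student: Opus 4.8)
\medskip
\noindent\textbf{Proof plan.}
The plan is to exhibit both invariants as the pullback, along a classifying map out of $B$, of one and the same cohomology class on a space built from Waldhausen's $A$--theory of $E$, and then to identify the two classifying maps. Concretely, I would take as common target the homotopy fiber
$$\mathrm{Wh}(E;\F)\ :=\ \hofib\!\left(\Omega^\infty\Sigma^\infty(E_+)\xrightarrow{\textnormal{assembly}}A(E)\xrightarrow{\textnormal{linearization}_\F}K(\C)\right),$$
together with the class $\beta$ obtained from a Borel regulator class $b$ on $K(\C)$ through this fiber sequence; recall that two lifts of a fixed map into such a homotopy fiber differ precisely by a map to the loop space $\Omega K(\C)$ of the target, and that $\beta$ restricts on those loops to the transgression of $b$. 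By construction -- in the form extended to finite $\F$ in \cite{twistedsmooth} -- smooth torsion is $\tau^{sm}(E;\F)=(\varphi^{sm})^{*}\beta$, where $\varphi^{sm}\colon B\to\mathrm{Wh}(E;\F)$ is the lift of the Becker--Gottlieb transfer $p^!$ supplied by the Dwyer--Weiss--Williams smooth structure (\cite{zbMATH05551153}), with the nullhomotopy into $K(\C)$ given by the trivialization of the fiberwise homology $H_*(F;\F)$ that the hypothesis provides. So the whole task is to realize $\tau^{IK}$ in the same terms and then compare.

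\medskip
First I would recast the Igusa--Klein construction homotopy--theoretically. After fiberwise stabilization (which changes neither invariant) a parametrized generalized Morse function $f\colon E\to\Reals$ exists; by Igusa's higher stability and framed function theorems the parametrized space of such functions is, stably, a model for a connective cover of $A(E)$, and under this identification the Morse complex of $f$ -- the fiberwise filtered chain complex over $E$ associated to the critical-point data -- represents the $A$--theory Euler characteristic $\chi(p)$. The explicit $K$--theory map along which Igusa and Klein pull back the Borel regulator (\cite{Igusa2}) is then nothing but $\textnormal{linearization}_\F$ applied to this Morse complex, made acyclic by the same trivialization of $H_*(F;\F)$; this exhibits a map $\varphi^{IK}\colon B\to\mathrm{Wh}(E;\F)$ with $(\varphi^{IK})^{*}\beta=\tau^{IK}(E;\F)$.

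\medskip
It then remains to compare $\varphi^{IK}$ and $\varphi^{sm}$. Both lie over the transfer $p^!$: for $\varphi^{sm}$ by definition, and for $\varphi^{IK}$ because the parametrized Morse function computes $\chi(p)$, which the Dwyer--Weiss--Williams index theorem identifies with the image of $p^!$ under assembly. Both carry, moreover, the nullhomotopy into $K(\C)$ induced by the same trivialization of $H_*(F;\F)$. Hence $\varphi^{IK}$ and $\varphi^{sm}$ differ at most by a map $B\to\Omega K(\C)$; I would pin this difference down -- to zero, or at least into the kernel of the transgressed regulator -- by a normalization check on a generating example such as a linear sphere bundle, where the Morse model is completely explicit and the homotopy--theoretic transfer is classical. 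This simultaneously upgrades the nonzero-but-undetermined proportionality constant of \cite{zbMATH05848700} to $1$.

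\medskip
The main obstacle is precisely this last step, in two independent respects. First, one must genuinely line up the two homotopy--theoretic pictures: Igusa's geometric Whitehead space, assembled from Morse data and filtered chain complexes, against the Dwyer--Weiss--Williams homotopy fiber $\mathrm{Wh}(E;\F)$, and -- this is where the constant is actually decided -- one must show that Igusa's linearization and Waldhausen's linearization out of $A(E)$ carry the Borel regulator to the \emph{same} class $\beta$, so that the two descriptions really do pull back the same class. Second, one must thread the finite coefficient system $\F$ (and finite $\pi_1B$) through both constructions and verify that it enters them identically, so that the comparison in the twisted case loses nothing relative to the untwisted case. Everything else -- existence and essential uniqueness of parametrized generalized Morse functions, invariance under fiberwise stabilization, and naturality of the assembly, linearization and transfer maps -- is furnished by the cited work of Igusa, Waldhausen, and Dwyer--Weiss--Williams.
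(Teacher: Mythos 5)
Your framing is the paper's starting point: both torsions are lifts of the Becker--Gottlieb transfer into the homotopy fiber $Wh_\F(E)$ of $Q(E_+)\to A(E)\to K(\C)$, so one must compare the underlying maps into $Q(E_+)$ and the two nullhomotopies into $K(\C)$. The genuine gap is in how you propose to close the comparison. Having reduced the discrepancy to a map $B\to\Omega K(\C)$, you propose to kill it by ``a normalization check on a generating example such as a linear sphere bundle.'' A single computation does not determine a map $B\to\Omega K(\C)$ for an arbitrary bundle; to promote a check on generators to a statement for all bundles you need an axiomatic characterization of higher torsion, and that is precisely the route of \cite{zbMATH05848700}, which yields only proportionality by an \emph{undetermined} nonzero constant. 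Worse, in the twisted case the axiomatic route is circular here: the missing axiom for smooth torsion is continuity, whose verification requires the explicit value of $\tau^{sm}$ on $S^1$-bundles --- and that value is obtained in this paper only as a \emph{corollary} of the comparison theorem, there being no independent nontrivial computation of smooth torsion. Your intermediate step, that the space of parametrized generalized Morse functions models a connective cover of $A(E)$ with the Morse complex representing $\chi(p)$, is likewise a substantial theorem rather than something furnished for free by the citations.

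What the paper does instead is entirely constructive and avoids any normalization. It introduces the expansion category $\E(E^{\Delta^\bullet})$ (after the unpublished Igusa--Waldhausen manuscript), a single combinatorial category carrying two classes of equivalences with $Kx\E(E^{\Delta^\bullet})\simeq Q(E_+)$ and $Kh\E(E^{\Delta^\bullet})\simeq A(E)$. The Igusa--Klein transfer factors as an explicit ``Morse transfer'' $p^!_M$ into this category followed by the multiple mapping cylinder functor; the latter is shown to be an equivalence, so $p^!_{IK}\simeq p^!_M$. Then $p^!_M$ and the smooth transfer are compared at the level of $A(E)$ by the explicit natural transformation that includes the Morse skeleton of $\sigma^*E$ into $\sigma^*E$ itself, and the comparison is lifted back to $Q(E_+)$ using Waldhausen's trace splitting $\tr\circ a\sim\id$. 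Finally, both nullhomotopies into $K(\C)$ are identified as the canonical natural transformation to the constant (final) functor, so the triangle of homotopies commutes on the nose and no residual map to $\Omega K(\C)$ ever arises. If you want to repair your argument, you must replace the normalization step by such an explicit identification of the two lifts (maps \emph{and} homotopies) through a common model.
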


As an immediate consequence we get many specific values for smooth torsion from the calculation of Igusa-Klein torsion \cite{Igusa2} such as 

\begin{Corollary} For a linear $S^1$-bundle $S(\xi)\to B$ associated to the complex line bundle $\xi\to B$ with local system $\F_\zeta$ given by a root of unity $\zeta\in \mathbb{C}^*$ we have
\begin{eqnarray*}\tau_{k}^{sm}(S(\xi); \F_{\zeta})=-{n^k}L_{k+1}(\zeta)ch_{2k}(\xi).
\end{eqnarray*}
The function $L_{k+1}$ is the real polylogarithm defined by
	$$L_{k+1}(z)=\Re\left(\frac 1 {i^k} \L_{k+1}(z)\right),$$
where $\L_{k+1}$ is the complex polylogarithm
		$$\L_{k+1}(z)= \sum_{m=1}^\infty \frac{z^m}{m^{k+1}}.$$
Consequently smooth torsion satisfies the continuity axiom.
\end{Corollary}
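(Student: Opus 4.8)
The plan is to reduce everything to Theorem~\ref{mainthm} together with Igusa's known evaluation of Igusa-Klein torsion on linear circle bundles. First I would apply Theorem~\ref{mainthm} to the bundle $S(\xi)\to B$ equipped with the fiberwise local system $\F_\zeta$: both $\tau^{IK}$ and $\tau^{sm}$ are defined in this situation, since the fiber is $S^1$ and for a linear circle bundle the monodromy acts on $H_*(S^1;\F_\zeta)$ through rotations of the fiber, hence as trivially as the defining hypotheses require. Theorem~\ref{mainthm} then gives $\tau^{sm}_k(S(\xi);\F_\zeta)=\tau^{IK}_k(S(\xi);\F_\zeta)$, so it only remains to quote the value of the right-hand side.

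Next I would invoke the explicit calculation in \cite{Igusa2}: writing $n=\ord(\zeta)$ for the order of the root of unity $\zeta$, Igusa produces the relevant class in $K$-theory, pulls back the Borel regulator, and evaluates on the linear $S^1$-bundle $S(\xi)$ with coefficients $\F_\zeta$, obtaining $\tau^{IK}_k(S(\xi);\F_\zeta)=-n^k L_{k+1}(\zeta)\,ch_{2k}(\xi)$ with $L_{k+1}$ the real polylogarithm displayed in the statement. Combining this with the previous step yields the asserted formula for $\tau^{sm}_k(S(\xi);\F_\zeta)$. The one place that needs genuine care is the bookkeeping of normalizations — the power $n^k$ (which records how the $n$-fold fiberwise cover enters), the overall sign, and the precise normalization of the Borel regulator and of $\L_{k+1}$ in the two papers — together with confirming that the circle-bundle computation of \cite{Igusa2} is being quoted in exactly the twisted generality used here; I expect this convention-matching, rather than anything conceptual, to be the main obstacle.

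For the final assertion, recall from \cite{twistedsmooth} that $\tau^{sm}$ was shown to satisfy every axiom of the author's twisted axiom system \cite{Axioms} with the single exception of the continuity axiom, and that verifying continuity amounts precisely to computing $\tau^{sm}$ on the linear $S^1$-bundles $S(\xi)$ as the coefficient root of unity $\zeta$ varies on the unit circle. The formula just established writes this value as $-n^k\,ch_{2k}(\xi)$ times $L_{k+1}(\zeta)$, and since the complex polylogarithm $\L_{k+1}(z)=\sum_{m\ge 1}z^m/m^{k+1}$ converges uniformly on $\{|z|\le 1\}$ for every $k\ge 1$ and hence restricts to a continuous function on the unit circle, the real polylogarithm $L_{k+1}$ — and therefore $\zeta\mapsto\tau^{sm}_k(S(\xi);\F_\zeta)$ — is continuous. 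This is exactly the statement of the continuity axiom, so combining the displayed formula with the partial verification of the axioms in \cite{twistedsmooth} completes the argument.
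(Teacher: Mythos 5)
Your proposal is correct and follows essentially the same route as the paper, which presents this corollary as an immediate consequence of Theorem~\ref{mainthm} combined with Igusa's explicit evaluation of $\tau^{IK}$ on linear $S^1$-bundles in \cite{Igusa2}, with the continuity axiom then following from the continuity of the polylogarithm as in \cite{twistedsmooth}. Your added caution about matching normalizations (the factor $n^k$, the sign, and the Borel regulator conventions) is exactly the right place to be careful, and your reading of $n$ as the order of $\zeta$ is the intended one.
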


Besides this we can extend the definition of Igusa-Klein torsion: The original construction only works if the fundamental group $\pi_1B$ acts trivially on $H_*(F;\F),$ whereas for smooth torsion it is enough to say that said action is unipotent, i.e. there is a filtration $0=V_0\subset V_1\subset\ldots\subset V_n=H_*(F;\F)$ where $\pi_1 B$ acts trivially on the quotients $V_{i+1}/V_i.$ Now we can define $\tau^{IK}:=\tau^{sm}$ if the action is unipotent and since $\tau^{sm}$ satisfies all axioms this behaves naturally. In particular if $\pi_1 B$ is finite, higher torsion is defined for any finite local system $\F.$ Goodwillie and Igusa have recently announced to be able to make this extension of the definition explicitly as well. 

Our strategy for the proof is as follows: Let $Wh_\F(E)$ be the fiber of the composition $\Omega^\infty\Sigma^\infty(E_+)\to A(E)\stackrel{\F}{\longrightarrow}K(\C)$ mentioned above. Then both the Igusa-Klein torsion and the smooth torsion are defined by pulling back a certain homology class along a map $\tau^{IK}:B\to Wh_\F(E)$ and $\tau^{sm}:B\to Wh_\F(E)$ respectively. Both constructions use very different models for the involved spaces, however. Inspired by the unpublished manuscript \cite{IgusaWaldhausen}, we give an explicit unifying model and use it to compare the two torsion maps.

\textbf{Outline:} In sections \ref{smoothtorsion} and \ref{IKtorsion} we will recall the definitions of smooth and Igusa-Klein torsion respectively. In Section \ref{comparison} we will provide the unifying model and compare the torsions.

\textbf{Acknoledgements:} The author wants to thank Kiyoshi Igusa for several very helpful conversations and for pointing towards the expansion categories, which proved be the key to the comparison result.

\section{The higher smooth torsion map} 
\label{smoothtorsion}

\subsection{The manifold approach}

This section repeats the constructions made in the beginning of \cite{zbMATH05551153} and \cite{twistedsmooth}. Let $X$ be a compact manifold. We will define a model for $\Omega^\infty\Sigma^\infty X_+$ which (by abusing notation a bit) we will call $Q(X_+).$ It will be constructed as the direct limit under stabilization of the Waldhausen K-theory spaces of certain categories of partitions. We will refrain from giving details as they can be found in \cite{zbMATH05551153}.

A partition of $X\times I$ is a (not necessarily smooth and possibly with corners) codimension 0 submanifold $M\subset X\times I$ that represents the lower half of a division of the interval in two parts and is somewhat standard around the boundary and on the lower third. Part of the data is also a vector field transversal to the boundary which can be used to smoothen the partition. The set  $\P_k(X)$ consists of partitions of $X\times I$ parametrized over $\Delta^k.$ These fit together in a simplicial set $\P_\bullet(X).$ There is a stabilization map $\P_\bullet (X)\to \P_\bullet (X\times I)$ defined by putting the non-trivial part of a partition of $X\times I$ into the middle third (of the second interval) to get a partition of $(X\times I)\times I.$ We note that there is a partial monoid structure on $\P_\bullet (X)$ where we add two partitions of $X\times I$ if they do not share any non-trivial parts. Stabilization now provides a monoid structure on $\colim_n \P_\bullet(X\times I^n).$ 

The sets $\P_k(X)$ can also be viewed as partially ordered sets  by inclusion, and hence as categories. So we can apply the Waldhausen $\S_\bullet$-construction  (or rather the Thomason variant thereof) \cite{zbMATH03927168} to get bisimplicial categories $\S_\bullet \P_\bullet (X).$ Recall that the objects of the category $\S_n\P_0(X)$ are $(n+1)$-tuples of partitions $(M_i)_{i=0}^n$ with
	$$M_0\subset M_1\subset\ldots\subset M_n,$$
together with identifications of any subquotients. Here $M_0$ is required to be the initial partition $X\times [0,\frac 1 3].$ Note that the space $|\S_\bullet \P_\bullet(X)| $ is endowed with a canonical base point.

By stabilization we get a space 
	$$Q(X_+):=\Omega\hocolim_n|\S_\bullet \P_\bullet(X\times I^n)|\simeq \Omega^\infty\Sigma^\infty X_+.$$
The weak equivalence on the right is rather intricate and was shown by Waldhausen in \cite{zbMATH03958257} and \cite{zbMATH04163135}.

Recall that the algebraic K-Theory of the space $X$ is defined as 
	$$A(X)=\Omega |\S_\bullet \R^{hf}(X)|$$
where $\R^{hf}(X)$ is the Waldhausen category of homotopy finite retractive spaces over $X$ \cite{zbMATH03927168}. By ``thickening up'' this model for the algebraic K-theory of spaces one can define a map 
	$$\alpha:Q(X_+)\to A(X)$$
that roughly takes a partition over $X$ and views it as a retractive space over $X.$ This map is a model for the assembly. See \cite{zbMATH05551153} for details.

\begin{Remark} Since the assembly map $Q(X_+)\to A(X)$ has a homotopy left-inverse \cite{zbMATH03958257}, we won't need to fully understand this map, but merely know that it exists. For details compare the proof of proposition \ref{InverseTrick}.
\end{Remark}

\begin{Remark} We will often use the simplicially enriched model $A(X^{\Delta^\bullet})$ for $A(X).$ The objects of $\R^{hf}(X^{\Delta^n})$ are $\Delta^n$-families of retractive spaces over $X,$ which can also be viewed as retractive spaces $Y$ over $X\times \Delta^n$ together with a projection $Y\to \Delta^n$ fitting in the following commutative diagram
	$$\xymatrix{
		Y\ar[dr]\ar@<0.5ex>[rr]&&X\times \Delta^n\ar[dl]^{\textnormal{pr}_2}\ar@<0.5ex>[ll]\\ & \Delta^n
		}
	$$
	We have $A(X)\simeq A(X^{\Delta^\bullet})$ given by the inclusion of zero simplices. This simplicial enrichment is similar to the one used in the definition of $Q(X_+),$ so we can view the assembly map as $\alpha:Q(X_+)\to A(X^{\Delta^\bullet}).$
\end{Remark}

\begin{Remark} \label{mapsinto} The following helps greatly in defining maps into $Q(X_+)$ (and $A(X)$). Recall that there is a natural map
	$$|\S_1\mathcal{W}|\times \Delta^1\to |\S_\bullet\mathcal{W}|$$
for any Waldhausen category $\mathcal{W}$ given by the inclusion of the 1-skeleton in the $\S_\bullet$ direction \cite{zbMATH03927168}. After taking the adjoint this gives a map 
	$$|\S_1\mathcal{W}|\to K\mathcal{W}.$$
Hence it is always enough to define a functor $\mathcal{C}\to \mathcal{W}\cong\S_1\mathcal{W}$ to get a map $|\mathcal{C}|\to K\mathcal{W}$ for any small category $\mathcal{C}.$
\end{Remark}

\subsection{The transfer map}

Let $S_\bullet(B)$ be the simplicial category of simplices $\sigma:\Delta^\bullet\to B$ with no non-trivial morphisms. Clearly, we have $|S_\bullet (B)|\simeq B.$ 

Let $E\to B$ be a smooth bundle. There is a transfer map 	
	$$p^!_A:|S_\bullet (B)|\to A(E^{\Delta^\bullet})$$
given by the functor that sends a simplex $\sigma:\Delta^n\to B$ to the retractive space
	$$E\times \Delta^n\sqcup \sigma^*E\leftrightarrows E\times \Delta^n.$$

One can explicitly construct a lift $p^!:|S_\bullet(B)|\to Q(E_+)$ such that $\alpha\circ p^!\sim p^!_A$ are homotopic where $\alpha:Q(X_+)\to A(E^{\Delta^\bullet})$ \cite{zbMATH05551153}.

\begin{Proposition}\label{InverseTrick} The map $p^!:|S_\bullet(B)|\to Q(E_+)$ has the homotopy type of the Gottlieb-Becker transfer $p_{BG}:B\to \Omega^\infty \Sigma^\infty X_+.$
\end{Proposition}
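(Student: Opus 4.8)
The plan is to reduce the statement to a known result of Waldhausen by exploiting the homotopy left-inverse to the assembly map. Recall from the earlier remark that the assembly $\alpha: Q(E_+) \to A(E^{\Delta^\bullet})$ admits a homotopy left-inverse $r: A(E) \to Q(E_+)$, constructed by Waldhausen in \cite{zbMATH03958257}; thus $r \circ \alpha \simeq \id_{Q(E_+)}$. Since $\alpha \circ p^! \simeq p^!_A$ by the construction of the lift, we obtain
	$$p^! \simeq r \circ \alpha \circ p^! \simeq r \circ p^!_A.$$
So it suffices to identify $r \circ p^!_A: |S_\bullet(B)| \to Q(E_+)$ with the Becker-Gottlieb transfer up to homotopy. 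First I would unwind $p^!_A$: the functor sending $\sigma:\Delta^n \to B$ to the retractive space $E \times \Delta^n \sqcup \sigma^*E \leftrightarrows E \times \Delta^n$ is, fiberwise over $B$, the image under the assembly-type construction of the ``fiberwise finite'' retractive space given by adding a copy of the fiber. This is precisely the model for the algebraic-K-theory transfer associated to the smooth (in particular fiberwise compact) bundle $p:E \to B$.

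The key step is then to invoke the identification of the composite
	$$B \simeq |S_\bullet(B)| \xrightarrow{\ p^!_A\ } A(E) \xrightarrow{\ r\ } Q(E_+) \simeq \Omega^\infty\Sigma^\infty E_+$$
with the Becker-Gottlieb transfer $p_{BG}$. This is essentially the content of Waldhausen's work relating the $A$-theory transfer, the assembly map, and the Becker-Gottlieb transfer for fibrations with compact fibers: the stable-homotopy retraction of assembly carries the $A$-theoretic (Dwyer-Weiss-Williams) transfer to the Becker-Gottlieb transfer. Concretely, the retraction $r$ is built from the ``linearization'' of a retractive space to its reduced stable homotopy type, and applying this to $E \times \Delta^n \sqcup \sigma^*E$ yields fiberwise the stable homotopy type of $(\sigma^*E)_+$, whose pushforward to $E$ assembles over $B$ to exactly the Becker-Gottlieb transfer map. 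I would cite the relevant comparison from \cite{zbMATH05551153} (or the Dwyer-Weiss-Williams framework it rests on) rather than reprove it.

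The main obstacle is bookkeeping: all three maps ($p^!$, $\alpha$, $r$) are defined using the thickened, simplicially enriched partition/retractive-space models, and one must check that the homotopies $r\circ\alpha \simeq \id$ and $\alpha\circ p^! \simeq p^!_A$ are compatible enough to be composed — i.e. that no basepoint or coherence issue obstructs concatenating them over the simplicial category $S_\bullet(B)$. A secondary point is making precise the claim that $r \circ p^!_A$ models the Becker-Gottlieb transfer at the point-set level of these models, which requires tracing through the definition of $\alpha$ in \cite{zbMATH05551153} carefully enough to see that the ``view a partition as a retractive space'' functor, post-composed with linearization, recovers fiberwise suspension spectra. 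Everything else is formal naturality, so I expect the proof to be short modulo these identifications.
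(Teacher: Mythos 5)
Your proposal is correct and follows essentially the same route as the paper: both arguments use Waldhausen's trace map (your $r$) as a homotopy left inverse to assembly, reduce via $p^!\simeq r\circ\alpha\circ p^!\simeq r\circ p^!_A$, and then invoke the known identification of $\tr\circ p^!_A$ with the Becker--Gottlieb transfer from \cite{zbMATH05551153}. The paper simply cites that last identification as known rather than unwinding it, so your extra discussion of how the retraction recovers fiberwise suspension spectra is additional detail, not a different argument.
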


\begin{proof} We adopt the proof from \cite{zbMATH05551153}.  Let $\tr:A(X)\to\Omega^\infty \Sigma^\infty X_+$ be Waldhausen's trace map \cite{zbMATH03958257}, a right inverse to $\alpha$ with $\alpha\circ \tr\sim \id_{Q(E_+)}.$ It is know that the composition $\tr\circ p^!_A\sim p_{BG}.$ So we have
	$$p_{BG}\sim \tr\circ p^!_A\sim \tr \circ\alpha\circ p^!\sim p^!.$$
\end{proof}

\begin{Remark} Because of the existence of the Waldhausen trace map as in the proof above, we do not need to explicitly understand the transfer map $p^!$ but rather only $p^!_A.$
\end{Remark}

\subsection{Linearization}

We still follow \cite{zbMATH05551153} closely to define linearization maps. Let $R$ be a ring and let $Ch^{hf}(R)$ be the Waldhausen category of homotopy finitely dominated chain complexes of projective $R$-modules. Recall that the Waldhausen K-theory of this category is just a model for the algebraic K-theory $K(R)$ of $R$ \cite{zbMATH03927168}.

Now let $X$ be a compact manifold and $\F$ a local system of $R$-modules on $X.$ Then we get a functor
	$$\R^{hf}(X)\to Ch^{hf}(R)$$
by sending a retractive space $X\to Y\to X$ to the relative singular chain complex $C_*(Y,X;\F).$ This induces a linearization map
	$$\lambda^\R_\F:A(X)\to K(R)$$
and if we compose with the assembly $\alpha:Q(X_+)\to A(X)$ we get a map 
	$$\lambda_\F:Q(X_+)\to K(R).$$
	
Let $E\to B$ be a bundle of compact manifolds and let $\F$ be a local system of $R$-modules on $E.$ Similarly to before we can define a functor
	$$S_\bullet(B)\to wCh^{hf}(R)$$ 
(the $w$ indicates that we are only looking at quasi-isomorphisms as morphisms.) In particular, this functor sends a simplex $\sigma:\Delta^k\to B$ to the chain complex $C_*(\sigma^*E, \F).$ Using Remark \ref{mapsinto} this gives rise to a map
	$$c_\F: |S_\bullet(B)|\to K(R).$$

\begin{Theorem} Let $E\to B$ be a bundle of compact manifolds, $R$ a ring, and $\F$ a local system of $R$-modules on $E.$ Then there is a preferred homotopy which makes the following diagram commute:
	$$\xymatrix{
		&	Q(E_+)\ar[d]^{\lambda_\F}\\
		|S_\bullet(B)|\ar[ur]^{p^!}\ar[r]_{c_\F}	&	K(R)}$$
	The homotopy is induced by the isomorphism $H_*(\sigma^*E;\F)\cong H_*(E\sqcup \sigma^* E, E;\F).$
\end{Theorem}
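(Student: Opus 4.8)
The plan is to factor the left-hand composite through the assembly map, thereby reducing the theorem to a comparison of two functors into $wCh^{hf}(R)$, and then to produce the homotopy from the tautological splitting of the singular chain complex of a disjoint union.

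First I would use that $\lambda_\F$ is by construction the composite $\lambda^\R_\F\circ\alpha$, together with the fact recalled above that the chosen lift $p^!$ satisfies $\alpha\circ p^!\simeq p^!_A$. Applying the honest map of spaces $\lambda^\R_\F$ to the latter homotopy yields $\lambda_\F\circ p^!\simeq\lambda^\R_\F\circ p^!_A$, so the theorem is reduced to exhibiting a preferred homotopy $\lambda^\R_\F\circ p^!_A\simeq c_\F$ between maps $|S_\bullet(B)|\to K(R)$; the homotopy asserted in the statement will be the concatenation of these two.

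Next I would unwind both sides via Remark \ref{mapsinto}: each of $c_\F$ and $\lambda^\R_\F\circ p^!_A$ is induced by a (simplicially enriched) functor out of $S_\bullet(B)$. On a simplex $\sigma\colon\Delta^k\to B$ the functor underlying $c_\F$ outputs $C_*(\sigma^*E;\F)$, whereas the functor underlying $\lambda^\R_\F\circ p^!_A$ first forms the retractive space $E\times\Delta^k\sqcup\sigma^*E\leftrightarrows E\times\Delta^k$ and then takes its relative singular chain complex $C_*(E\times\Delta^k\sqcup\sigma^*E,\,E\times\Delta^k;\F)$. Since every singular simplex of a disjoint union lies entirely in one of the two pieces, the singular chains of a disjoint union split as a direct sum, whence there is a canonical isomorphism of chain complexes $C_*(E\times\Delta^k\sqcup\sigma^*E,\,E\times\Delta^k;\F)\cong C_*(\sigma^*E;\F)$; on homology this is precisely the isomorphism $H_*(\sigma^*E;\F)\cong H_*(E\sqcup\sigma^*E,E;\F)$ named in the statement. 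This identification is natural in $\sigma$ and compatible with the face and degeneracy maps, so it assembles into a natural isomorphism between the two functors, and a natural isomorphism of functors between Waldhausen categories induces a canonical homotopy between the induced maps on $K$-theory. That homotopy, concatenated as above, is the preferred homotopy.

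I expect the only real difficulty to be bookkeeping around the simplicial enrichment $A(E^{\Delta^\bullet})$ in which $\alpha$ and $p^!_A$ live: one must check that the chain-level splitting of $C_*(E\times\Delta^k\sqcup\sigma^*E,\,E\times\Delta^k;\F)$ is compatible with the structure of a $\Delta^k$-parametrised family of retractive spaces, so that it genuinely defines a map of simplicial objects feeding into the $\S_1$-mechanism of Remark \ref{mapsinto}, and that post-composing the homotopy $\alpha\circ p^!\simeq p^!_A$ with $\lambda^\R_\F$ respects this structure. None of this is conceptually delicate; once the enrichment is in place the argument is formal.
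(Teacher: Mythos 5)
Your proposal is correct and follows essentially the same route as the paper: reduce to $\lambda^{hf}_\F\circ p^!_A$ via the homotopy $\alpha\circ p^!\simeq p^!_A$, then identify $C_*(E\times\Delta^k\sqcup\sigma^*E,\,E\times\Delta^k;\F)$ with $C_*(\sigma^*E;\F)$ naturally in $\sigma$. The paper simply states this equivalence without spelling out the disjoint-union splitting; your extra detail on naturality and the simplicial enrichment is harmless elaboration of the same argument.
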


\begin{proof} The composition $\lambda_\F\circ p^!\sim \lambda^\R_\F\circ p^!_A$ sends the simplex $\sigma:\Delta^n\to B$ to the chain complex
	$$C_*(E \sqcup \sigma^E,E;\F)$$
which is homotopy equivalent to 
	$$C_*(\sigma^*E;\F)$$
which is the image of $\sigma$ under $c_\F.$
\end{proof}

This will be the starting point for us to define smooth parametrized torsion.

\subsection{Unreduced and reduced smooth parametrized torsion}

This section is where we depart slightly from \cite{zbMATH05551153} in that our results will be a little bit more general than there. This also appears in \cite{twistedsmooth}. The idea here is that if we can show that the map $c_\F:|S_\bullet(B)|\to K(R)$ is homotopic to the constant map with value the 0 complex $0\in K(R)$ then we get a lift 
	$$|S_\bullet(B)|\to \hofib\left(Q(E_+)\stackrel{\lambda_\F}{\longrightarrow} K(R)\right)_0=:Wh_\F(E)$$
where we call the codomain the Whitehead space of $E.$ This will not always be the case, but the following condition is almost sufficient:

\begin{Definition}\label{unipotent} Let $E\to B$ be as before and let $\F$ be a complex local system on $E.$ Let $B$ be connected, $b_0\in B$ be the basepoint, and let $F$ be the fiber over $b_0.$ We say $\pi_0 B$ acts unipotently on $H_*(F;\F)$ if there exists a filtration of $H_*(F;\F)$ by $\pi_1 B$ submodules 
	$$0=V_0(F)\subset \ldots \subset V_k(F)=H_*(F;\F)$$
such that $\pi_1 B$ acts trivially on the quotients $V_i(F)/V_{i-1}(F).$
\end{Definition}

\begin{Theorem}\label{contraction} Let $E\to B$ be a bundle, $B$ path-connected, $b_0\in B$ the basepoint, $F_{b_0}$ the fiber over the basepoint,  $\F\to E$ a complex local system such that $\pi_1 B$ acts unipotently on $H_*(F,\F).$ Then there exists a preferred homotopy from the map $c_\F:|S_\bullet(B)|\to K(\C)$ to the constant map with value the complex $H_*(F_{b_0},\F)\in K(\C)$ (with trivial boundary maps).
\end{Theorem}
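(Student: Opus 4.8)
The plan is to produce the nullhomotopy of $c_\F$ by an inductive "shuffling" argument along the unipotent filtration, reducing everything to the case where $\pi_1 B$ acts trivially on $H_*(F;\F)$ and then exhibiting the constant map explicitly. First I would unpack the map $c_\F\colon |S_\bullet(B)|\to K(\C)$ from Remark \ref{mapsinto}: it is induced by the functor sending a simplex $\sigma\colon\Delta^k\to B$ to the chain complex $C_*(\sigma^*E;\F)$, viewed as an object of $wCh^{hf}(\C)$. Since $K(\C)$ only sees the weak-equivalence type, I may replace each $C_*(\sigma^*E;\F)$ by a minimal/formal model; the point is that over a simplex $\sigma$, the bundle $\sigma^*E\to\Delta^k$ is fiber-homotopy trivial, so $C_*(\sigma^*E;\F)\simeq C_*(F_{b_0};\F)$ as a complex, but \emph{not} canonically — the identification depends on a choice of path, and monodromy around loops in $B$ is exactly the obstruction to the map being literally constant.

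The key steps, in order: (1) Choose, for each vertex $b$ of the relevant simplicial model of $B$, a path from $b_0$ to $b$, giving an identification $H_*(F_b;\F)\cong H_*(F_{b_0};\F)$; the unipotent hypothesis is not yet needed here, just path-connectedness. (2) Observe that the resulting system of identifications fails to be coherent only up to the monodromy action of $\pi_1(B,b_0)$ on $H_*(F_{b_0};\F)$; organize this as a map $B\to K(\C)$ factoring (up to homotopy) through the classifying map $B\to B\Aut_\C(H_*(F_{b_0};\F))$ followed by $B\Aut_\C(H_*(F_{b_0};\F))\to K(\C)$, where the latter records the $\pi_1$-module $H_*(F_{b_0};\F)$. (3) Now invoke unipotence: the filtration $0=V_0\subset\cdots\subset V_k=H_*(F_{b_0};\F)$ by $\pi_1 B$-submodules with trivial action on subquotients means the classifying map $B\to B\Aut_\C(H_*(F_{b_0};\F))$ lands in (is homotopic into) the classifying space of the subgroup of unipotent automorphisms $U$ preserving the filtration. (4) The additivity theorem in Waldhausen K-theory lets me split the composite $B\to BU\to K(\C)$ as the sum, over $i$, of the maps classifying the subquotients $V_i/V_{i-1}$; since $\pi_1 B$ acts \emph{trivially} on each $V_i/V_{i-1}$, each of these maps is the constant map at the complex $V_i/V_{i-1}$ with zero differential, so the sum is the constant map at $H_*(F_{b_0};\F)=\bigoplus_i V_i/V_{i-1}$ in $K(\C)$. (5) Assemble the homotopies from (2), (3), (4) into the single "preferred" homotopy, keeping track that the identification used in step (1)–(2) is precisely the isomorphism $H_*(\sigma^*E;\F)\cong H_*(F_{b_0};\F)$ referenced in the previous theorem's statement.

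The main obstacle I anticipate is step (4): making the additivity-theorem splitting genuinely natural in $B$, i.e., producing the decomposition of the \emph{map} $|S_\bullet(B)|\to K(\C)$ (not just of the target object) compatibly over all simplices simultaneously. Waldhausen's additivity theorem is a statement about $\S_\bullet$ of a category with cofibrations, and to apply it fiberwise over $B$ I need the filtration $V_\bullet$ to be realized not just on homology but by a filtration of the chain-level functor $\sigma\mapsto C_*(\sigma^*E;\F)$ by subfunctors valued in $Ch^{hf}(\C)$ — which requires choosing filtered minimal models coherently over the simplicial set. A clean way around this is to work with a cellular/CW model of $E\to B$ and the associated filtered cellular chain complex, or to use the fact (from the theory of nilpotent fibrations) that unipotent monodromy is equivalent to the existence of such a filtered model after one subdivision; either way this is the place where real work, rather than formal manipulation, is needed. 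A secondary, more bookkeeping-type difficulty is ensuring the homotopy is "preferred" in a way that will later be compatible with the Igusa–Klein side — but since the statement only asserts existence of a preferred homotopy induced by the named isomorphism, it suffices that each of the homotopies in steps (2)–(4) is canonical once the path choices in (1) are fixed, and to note that different path choices differ by an inner automorphism, hence give homotopic (indeed canonically homotopic) maps to $K(\C)$.
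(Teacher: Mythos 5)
The paper gives no in-text proof of this theorem --- it defers to \cite{twistedsmooth} and \cite{zbMATH05551153} --- and the argument in those references is essentially the one you outline: reduce the chain-level functor $\sigma\mapsto C_*(\sigma^*E;\F)$ to the local system of fiberwise homology (possible over the field $\C$), then use the unipotent filtration together with Waldhausen's additivity theorem to split the resulting classifying map into summands with trivial monodromy, each of which is constant at $V_i/V_{i-1}$. Your diagnosis of where the real work lies --- making the additivity splitting natural over $B$ rather than objectwise --- is also where those references spend their effort, and your resolution (the $V_i$ are $\pi_1B$-submodules, hence define sub-local-systems of the homology local system, to which the additivity theorem for cofibration sequences of exact functors applies directly) is the correct one.
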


\begin{proof} This can be found in \cite{twistedsmooth} or adapted from \cite{zbMATH05551153}.
\end{proof}

\begin{Definition} Let $p:E\to B$ be a compact manifold bundle with $B$ connected. Let $F_{b_0}$ be the fiber over the basepoint and let $\F$ be a unipotent complex local system over $E.$ We view the homology complex $H_*(F_{b_0};\F)$ as an element in $K(\C)$ and we define the unreduced Whitehead space
	$$Wh_\F(E,b_0):=\hofib(Q(E_+)\stackrel {\lambda_\F}\to K(\C))_{H_*(F_{b_0};\F)}.$$
The unreduced smooth torsion of $p$ is the map $\tilde \tau_\F:|S_\bullet(B)|\to Wh_\F(E,b_0)$ determined by the transfer $p^!$ and the homotopy $\omega_\F.$
\end{Definition}

We want to make this independent of the basepoint choice. The answer is the reduced torsion:

\begin{Definition}\label{reducedtorsion} For a compact manifold bundle $p:E\to B$ with base point $b_0\in B$  and unipotent complex local system $\F$ on $E$ we define the Whitehead space 
	$$Wh_\F(E):=\hofib(Q(E_+)\to K(\C))_0.$$
The reduced smooth torsion $\tau_\F(p)$ is the map $|S_\bullet(B)|\to Wh_\F(E)$ obtained from $p^!$ by subtracting the element $p^!(b_0)\in Q(E_+)$ from the map $p^!$ and the path $\omega_{\F|{b_0}\times I}$ from the contracting homotopy $\omega_\F.$
\end{Definition}

\begin{Remark} So far this only defines the torsion map. The cohomological torsion $\tau_\F^{k}(p)\in H^{2k}(B;\Reals)$ is defined in the following way: If $\F$ is trivial, consider the final map $Q(E_+)\to Q(S^0).$ This lifts to a map on Whitehead spaces $Wh(E)\to Wh(*).$ Since the middle term in the homotopy fibration 
	$$Wh(*)\to Q(S^0)\to K(\C)$$
is rationally contractible, and the cohomology class $b_{k}$ (the Borel regulator) of $K(\C)$  therefore gives a cohomology class $b_k\in Wh(*).$ We then pull this back along the composition
	$$|S_\bullet(B)|\to Wh(E)\to Wh(*)$$
to get the cohomological torsion.

In the case where $\F$ is non-trivial, one replaces the point $*$ with the ``equivariant point'' $BG$ (where $G$ is a finite group). To do so, a manifold approximation to $BG$ is needed. See \cite{twistedsmooth}.
\end{Remark}

\renewcommand{\R}{\mathbb{R}}

\section{Igusa-Klein Torsion}
\label{IKtorsion}

In this section we will define the Igusa-Klein torsion. We will first give an intuitive description for the construction for $S^1$-bundles $E\to B.$ This will motivate the explicit definitions of the categorical models for the Whitehead space. Then we will generalize these models to accommodate the definition of the Igusa-Klein torsion for any smooth bundle $E\to B.$ Lastly, we will explain why the models involved have the correct homotopy type.

\subsection{Torsion of $S^1$-bundles}

Let $E\to B$ be an $S^1$-bundles and $\F$ a local system on $E$ that is completely determined on the fiber (thus $\F$ can just be viewed as a root of unity). Assume that $\F$ is non-trivial so that the singular chain complex $C_*(S^1,\F)$ is acyclic. Now choose a fiber-wise generalized Morse function $f:E\to \R$ (by \cite{Igusa2} this is a contractible choice). This means that on every fiber $S^1\cong E_x$ over $x\in B$ the function $f$ restricts to  either a proper Morse function or a function that may only have critical points that in local coordinates look like 
	$$f(x_1)=c\pm x^2\quad \textnormal{(critical point of order 2)}$$
or 
	$$f(x_1)=c\pm x^3\quad \textnormal{(critical point of order 3)}.$$
The set of points $x\in B$ over which the generalized Morse function $f$ gives critical points of order 3 on $E_x$ forms a codimension 1 submanifold of $B$ called the bifurcation set.

Now imagine two points $x,y\in B$ outside off the bifurcation set and a path connecting $x$ and $y$ by crossing the bifurcation set. This means that as we move from $x$ to $y$ either two critical points of $E_x$ come together in a critical point of degree 3 and cancel each other out or two critical points in $E_y$ are created from a critical point of degree 3. See figure \ref{Figure} for an example of an $S^1$-bundle over the interval $I.$  The idea of Igusa-Klein torsion is to codify and track information about critical points and the Morse complexes of $E_x$ as $x$ varies in the base space $B$ and use this information to define a torsion invariant. 

\begin{figure}
\centering

\definecolor{Black}{HTML}{000000}
\begin{tikzpicture}[xscale=.018cm, yscale=.018cm]

[draw=Black]

\draw  plot[smooth cycle, tension=.7] coordinates {(-3.5,-2) (-1.5,-1.5) (-1,0.5) (-1.5,2.5) (-2.5,2.5) (-3.5,1) (-4.5,2) (-5,2) (-5.5,1) (-5.5,-1)};
\node at (-5,2.5) {$x_3$};

\draw  plot[smooth cycle, tension=.7] coordinates {(-10.5,-1.5) (-10,0) (-10,1.5) (-10.5,2) (-11.5,2) (-12,3) (-12.5,4.5) (-13.5,4.5) (-14,3) (-14.5,2) (-15.5,3.5) (-16.5,1) (-16,-1.5) (-13,-2)};

\draw  plot[smooth cycle, tension=.7] coordinates {(-21,-1.5) (-20,0.5) (-21,3) (-22,1.5)  (-23,3) (-23.5,4.5) (-24.5,4.5) (-25,3) (-25.5,1.5)  (-26.5,3.5) (-27.5,1.5) (-27,-1)  (-24,-2) };
\draw (-1,-4.5) -- ++(-26,0);
\node at (-26.5,-4) {$B=I$};
\node at (-24,-5) {$b_1$};
\node at (-13,-5) {$b_2$};
\node at (-3.5,-5) {$b_3$};
\node at (-3,-2.5) {$y_3$};
\node at (-2,3) {$x_2$};
\node at (-3.5,0.5) {$y_2$};
\node at (-10.5,2.5) {$x_1=y_1$};
\node at (-13,5) {$x_2$};
\node at (-14.5,1.5) {$y_2$};
\node at (-15.5,4) {$x_3$};
\node at (-13,-2.5) {$y_3$};
\node at (-1.5,-1.5) {$\bullet$};
\node at (-10.5,-1.5) {$\bullet$};
\node at (-21,-1.5) {$\bullet$};
\node at (-21,3.5) {$x_1$};
\node at (-22,1) {$y_1$};
\node at (-24,5) {$x_2$};
\node at (-25.5,1) {$y_2$};
\node at (-26.5,4) {$x_3$};
\node at (-24,-2.5) {$y_3$};
\end{tikzpicture}
\caption{A maximum $x_1$ and minimum $y_1$ coming together at a birth death point.} 
\label{Figure}

\end{figure}
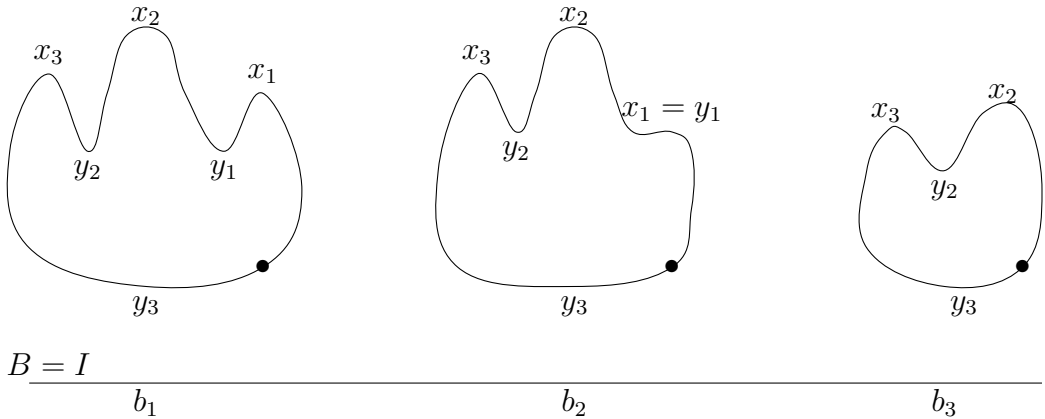

To do so let $S_\bullet(B)$ again be the simplicial category of simplices in $B$ with no morphisms. We will encode the above information as a functor $S_\bullet(B)\to \W_\bullet(\C),$ where $|\W_\bullet(\C)|$ is the Whitehead space. Following \cite{Igusa2} we will first give an explicit model for the simplicial category $\W_\bullet(C)$ and later show that it has the desired homotopy type. Guided by the varying Morse complexes of the fibers $E_x$ over $x\in B$, we see that the main feature this category should have is that its 0-simplices are Morse complexes, whereas its 1-simplices enable a connection between the Morse-complexes on different sides of the bifurcation set of $B.$ Here is the formal definition:

\begin{Definition}[2.1.1 and 2.1.7 of \cite{Igusa2}] The simplicial category $\W_\bullet(\C,n)$ is given by the following:
\begin{itemize}
	\item Its objects in degree $p$ are pairs $(C_*,P)$ where
	\begin{itemize}
		\item  $P=P_0\sqcup P_1$ is a graded partially ordered set where $P_0$ and $P_1$ have exactly $n$ elements with grading $0$ or $1$ respectively,
		\item $C_*$ is a $p+1$-tuple of upper triangular (in the partial ordering) of isomorphisms $f(i):\C^{P_1}\to\C^{P_0}$  viewed as acyclic chain complexes $0\to \C^{P_1}\to \C^{P_0}$ together with chain isomorphisms $E(i,j):f(i)\to f(j)$ for $i\leq j,$ homotopies $E(j,k)E(i,j)\to E(i,k),$ and higher homotopies. 
	\end{itemize}
	\item morphisms $(C_*,P)\to (C_*',P')$ are given by a closed bijection $P\to P'$ and a monomial chain morphisms over it (see \cite{Igusa2} for details).
	\item Face and degeneracy maps are given by deleting and repeating terms in the object tuples
\end{itemize}
\end{Definition}

The idea behind this definition is that a Morse function $f:S^1\to \R$ with exactly $2n$ critical points of degree 0 and 1 (ordered by the Morse function)
gives a 0-simplex of $\W(\C,n)$ via forming the Morse complex. This is not enough to treat our bundles $S^1\to E\to B$ as we do not expect every fiber to have the same amount of critical points. Hence, we need to stabilize:

\begin{Definition}[2.5.1-2.5.3 in \cite{Igusa2}] We make the following definitions:
	\begin{itemize}
		\item Let $(C_*,P)$ be an object of $\W_p(\C,n).$ An expansion pair is a pair of elements $x^-\in P_0$ and $x^+\in P_1$ such that $x^-<x^+,$ they are unrelated to any other elements, and $f(x^+)=gx^-$ with $g\in U(1).$
		\item Let $(C_*,P)\in \W_p(\C,n)$ and $(C_*',P')\in \W_p(\C,n').$ An expansion $(C_*,P)\to(C_*',P')$ is a degree 0 poset embedding $P\to P'$ such that $P'\backslash P$ is a union of expansion pairs, together with a chain monomorphism over said inclusion.
		\item The simplicial category $\W_\bullet (\C)$ has as objects any objects of $\W_\bullet (\C,n)$ for any $n$ and morphism the morphisms in $\W_\bullet(\C,n)$ together with the expansions. This can also be considered a bicategory.
	\end{itemize}
\end{Definition}

We are now ready to define the functor $S_\bullet(B)\to \W_\bullet(\C).$ Let $S^1\to E\to B$ be a smooth bundle and let $\zeta\in U(1)$ be a non-trivial root of unity (playing the role of a local system $\F\to E$). Choose a generalized parametrized Morse function $f:E\to \R.$ It is clear that such a map should assign any 0-simplex $x\in B$ the Morse complex of $E_x$ with coefficients $\zeta.$ However, the main difficulty arises in how to define the functor on 1-simplices that cross the bifurcation set as the two ends of such a simplex do not have to have the same amount of critical points. The solution to this are ``ghost'' points:

After a pair of critical points of the fiber of $E\to B$ meet and cancel over the bifurcation set, they remain detectable (in a neighborhood) as an inflection point on which the second derivative of $f:E_x\to \R$ vanishes. We call these points ghosts and we can choose our simplices small enough such that over any simplex where there is a pair of critical points dissolving into a ghost the corresponding ghost does not dissolve.  We do not change the homotopy type of $|S_\bullet(B)|\simeq B$ by only considering such small simplices. We will only call an inflection point in the fiber over a given point a ghost over a given small simplex, if there is a point within that simplex where the ghost develops into two critical points.  By the  discussion, over any point in a given simplex the number of critical points plus twice the number of ghosts will be the same. With this we define the functor $S_\bullet(B)\to \W(\C)$ on 1-simplices by sending the path $\gamma:\Delta^1\to B$ to $(C_*,P)$ where 
	$$P=\{\textnormal{critical points of degree 0, ghosts}\}\sqcup\{\textnormal{critical points of degree 1, ghosts}\}$$
and $C_*$ is given by the Morse complexes of $E_{\gamma(0)}$ and $E_{\gamma(1)}$ (with expansion pairs for ghosts) together with a chain isomorphism given by their connection. The functor can be defined similarly on higher simplices. For details see \cite{Igusa2}.

\begin{Remark} Upon careful inspection, one notices that the so defined $S_\bullet(B)\to \W_\bullet(\C)$ is not simplicial. However, this can be alleviated by introducing a weak equivalence $\curlyC_\bullet(B)\to S_\bullet(B)$ and a simplicial functor $\curlyC_\bullet(B)\to \W_\bullet(\C).$ This is done explicitly in \cite{Igusa2}. By abuse of notation, we will continue to write $S_\bullet(B)$ instead of $\curlyC_\bullet(B).$
\end{Remark}

\subsection{The Whitehead Category}

In the previous section we defined the Igusa-Klein torsion for any $S^1$-bundle $E\to B$ by explicitly constructing a functor $S_\bullet(B)\to \W(\C).$ Now we turn our attention to the more general case: Let $F\to E\to B$ be a smooth manifold bundle. We again wish to define its Igusa-Klein torsion. Let $\F\to E$ be a local system (trivial on $B$) and choose a generalized parametrized Morse function $f:E\to B.$ The Morse complexes of $E_x\cong M$ will not be concentrated in two degrees alone anymore, so to define the torsion functor we need a target category encoding general chain complexes as $0$-simplices, together with homotopies and higher homotopies as higher simplices:

\begin{Definition}[3.1.1, 3.2, 3.6 in \cite{Igusa2}] Let $n_*=(n_1,\ldots,n_k)$ be a tuple of natural numbers. The simplicial category $Wh_\bullet(\C,n_*)$ is defined as follows:
	\begin{itemize} 
		\item An object in degree $q$ is a pair $(C_*,P)$ where
			\begin{itemize}
				\item $P$ is a partially ordered graded set with $n_i$ elements in each degree $i$
				\item $C_*$ is a $q+1$-tuple of chain complexes where the $i$th entry of the $l$th chain complex ($1\leq l\leq q+1$)  is $C_*(l)_i=\C^{P_i}$ (the boundary maps can be different for each $l$), together with homotopies and higher homotopies connecting the entries in the $q+1$-tuple as if they were corners of a $q$-simplex.
			\end{itemize}
		\item Morphism are given by closed bijections $P\to P'$ and sufficiently coherent collections of chain morphisms over them.
	\end{itemize}
This can again be stabilized via expansion pairs to obtain the category $Wh_\bullet(\C).$ The full subcategory $Wh^h_\bullet(\C)$ is given by only considering objects comprised of acyclic chain complexes.
\end{Definition}

Notice that this was completely analogous to the definition of $\W_\bullet(\C)$ and contains the former as a subcategory.

\begin{Proposition}[\cite{Igusa2}] Let $M\to E\to B$ be a smooth fiber bundle and $\F\to E$ a finite local system as above such that the singular complex $C_*(M,\F)$ is acyclic. Then the (contractible) choice of a generalized parametrized Morse function $f:E\to \R$ defines a functor $S_\bullet(B)\to Wh^h(\C)$ analogously to the previously defined functor $S_\bullet(B)\to \W(\C)$ for $S^1$-bundles.
\end{Proposition}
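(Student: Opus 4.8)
The plan is to mimic the construction already carried out for $S^1$-bundles in the previous section, replacing the two-term Morse complexes by the full Morse complexes of the fibers and keeping careful track of how critical points and ghosts evolve over simplices of $B$. First I would fix a generalized parametrized Morse function $f:E\to\Reals$, whose space of choices is contractible by \cite{Igusa2}; this contractibility is what makes the resulting functor well-defined up to the appropriate coherence. For a $0$-simplex $x\in B$ with $f|_{E_x}$ an honest Morse function, the function assigns the Morse complex $C^M_*(E_x;\F)$, which is a chain complex of free $\C$-modules with one generator per critical point of each index, hence an object of $Wh_0(\C,n_*)$ where $n_i$ is the number of index-$i$ critical points. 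Acyclicity of each such Morse complex follows from acyclicity of the singular complex $C_*(M;\F)$, since the Morse complex is chain-homotopy equivalent to it; so the image lands in $Wh^h$.

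Next I would handle higher simplices. As in the $S^1$ case, I would shrink the simplicial set $S_\bullet(B)$ (strictly, pass to the weakly equivalent $\curlyC_\bullet(B)$) so that all simplices are small enough that the following holds over each simplex $\sigma:\Delta^q\to B$: the number of fiberwise critical points of each index, counted together with twice the number of ``ghost'' inflection points that genuinely resolve into a critical cancellation somewhere over $\sigma$, is constant. Over a $1$-simplex $\gamma$ crossing the bifurcation set, the two endpoint Morse complexes need not have the same generators, so one inserts expansion pairs for the ghosts (an $x^-$ in degree $i$ and $x^+$ in degree $i+1$ with an isomorphism between the corresponding generators) to put both endpoints on the common index set $P$; the $1$-simplex of $Wh_\bullet(\C)$ is then the pair of stabilized Morse complexes together with the chain isomorphism between them obtained from the continuation/connecting construction along $\gamma$. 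For a general $q$-simplex, the continuation maps along the edges, the prism homotopies along the $2$-faces, and so on, furnish exactly the homotopies and higher homotopies that the definition of $Wh_q(\C,n_*)$ demands; these are the Igusa--Klein ``framed function'' data, and the coherence needed is precisely what a generic choice of $f$ together with the auxiliary gradient-like vector fields produces.

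The main obstacle is organizing this coherence so that the assignment is genuinely functorial and simplicial, rather than merely defined simplexwise: the continuation isomorphisms and higher homotopies must be compatible with face and degeneracy maps, and the choices of ghosts/expansion pairs over a simplex must restrict correctly to its faces. As in the $S^1$ case (and as flagged in the Remark there), the naive assignment will not be strictly simplicial, and the fix is to replace $S_\bullet(B)$ by the comma-type category $\curlyC_\bullet(B)$ with its weak equivalence $\curlyC_\bullet(B)\to S_\bullet(B)$, over which a strictly simplicial functor $\curlyC_\bullet(B)\to Wh^h(\C)$ can be built. I would also need to verify that the monomial/coherence conditions on morphisms of $Wh_\bullet(\C)$ are met by the continuation maps — this is where genericity of $f$ is used to rule out degenerate behavior of trajectories. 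All of these points are exactly parallel to the $S^1$ construction, just with chain complexes in arbitrarily many degrees, so beyond the bookkeeping there is no new idea; I would therefore present the argument by carefully indicating the modifications to the $\W_\bullet(\C)$ construction and refer to \cite{Igusa2} for the combinatorial details of the coherence data.
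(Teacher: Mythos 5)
Your proposal is correct and follows essentially the same route as the paper, which states this result by direct analogy with the $S^1$-bundle construction and defers the coherence bookkeeping to \cite{Igusa2}: the full Morse complexes replace the two-term ones, ghosts yield expansion pairs, continuation maps and prism homotopies supply the higher simplices of $Wh_\bullet(\C)$, acyclicity of $C_*(M;\F)$ forces the image into $Wh^h$, and strict simpliciality is recovered by passing to $\curlyC_\bullet(B)$. No substantive differences to flag.
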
 

\begin{Remark} Only the category $Wh^h_\bullet(\C)$ has the desired homotopy type of the Whitehead space and not $Wh_\bullet(\C).$ Hence we only define the torsion functor for acyclic fibers and not more generally. We will consider a slightly more general case in subsection \ref{extendingIK}.
\end{Remark}

\subsubsection{Filtered chain complexes} 

The above construction proves to be somewhat unwieldy as the simplicial structure of $Wh_\bullet(\C)$ is quite complicated. Instead we will use the so called ``multiple mapping cylinder'' to turn an object of $Wh_\bullet(\C)$ - that is a system of chain complexes and higher homotopies - into a single filtered chain complex. We first define the latter:

\begin{Definition}[4.1.1 and 4.1.2 in \cite{Igusa2}] Let $P$ be a (partially ordered, graded) set.
	\begin{itemize}
		\item A $\Lambda P$-module is a $\C$-vector space $M$ together with subspaces $M^A$ for all $A\subset P$ such that
			\begin{itemize}
				\item $M^P=M$ and
				\item $M^{A\cap B}=M^A\cap M^B.$
			\end{itemize}
			The $\Lambda P$-modules naturally form a category.
		\item A $\Lambda P$-filtered $\C$-complex $(E,\lambda)$ is a chain complex in the category of $\Lambda P$-modules together with a cohomology class
			$$\lambda_A(x)\in H^{\deg x}(E^{A\sqcup\{x\}},E^A;\C)$$ 
		for all pairs $A\subset P$ and $x\in P$ such that
		\begin{itemize}
			\item $E^\emptyset=0$
			\item $E^{\{x\}}+E^A=E^{A\cup \{x\}}$
			\item $H_{\deg x}(E^{A\sqcup \{x\}},E^A;\C)\cong \C$ via the map induced by $\lambda_A(x)$ and this relative homology vanishes in all other degrees.
			\item The cohomology classes $\lambda_A(x)$ are compatible.
		\end{itemize}
	\end{itemize}
\end{Definition}

\begin{Remark} One can think of a $\Lambda P$-filtered chain complex as a chain complex together with a basis $P_i$ (elements of $P$ in degree $i$) for its $i$th homology for all $i.$
\end{Remark}

\begin{Definition}[4.1.3 \cite{Igusa2}] There is a multiple mapping cylinder construction turning an object $(C_*,P)\in Wh_q(\C)$ into a $\Lambda P$-filtered chain complex $Z_q(C_*).$ This is done by assembling all the homotopy information from $C_*$ into a large chain complex. If $q=0$ then $Z_0(C_*)=C_*$ is itself already a filtered complex. The filtering on higher $q$'s is similar. 
\end{Definition}

\begin{Remark} As the name suggests, the idea of the multiple mapping cylinders is to take subsequent mapping cones: For example, let $(C_*,P)\in Wh_1(\C)$ be  a 1-simplex. That is it is completely represented by a chain complex homotopy equivalence $f:(\C^P)_0\to (\C^P)_1.$ To retain all the information of this map, while still condensing the structure into a single chain complex, we can take the mapping cone $\cone(f)\in Ch(\C),$ which naturally has the structure of a filtered chain complex over $P.$ Clearly we have homotopy equivalences $(\C^P)_0\simeq \cone(f)\simeq (\C^P)_1.$ For higher simplices one can subsequently form cones of the connecting maps and homotopies.
\end{Remark}

Next we define the category of filtered chain complexes. Recall that the classifying space $BU(1)$ can be viewed as the geometric realization of the simplicial set with $BU(1)_k=U(1)^k.$ Let $\xi$ be the universal line bundle over $BU(1).$ 

\begin{Definition}[5.2.2 in \cite{Igusa2}]\label{filteredchain} Let $FC(BU(1)_\bullet,\xi,n_*)$ be the following simplicial category:
	\begin{itemize}
		\item An object in degree $q$ is a triple $(E,P,\gamma),$ where
			\begin{itemize}
				\item $P$ is a partially ordered graded set with $n_i$ elements in each degree $i$
				\item $\gamma: P\to BU(1)_k=U(1)^k$ is a map of sets and
				\item $E$ is a $\Lambda P$-filtered chain complex with cohomology classes $\lambda_A(x)$ giving isomorphisms
					$$H_{\deg x}(E^{A\sqcup\{x\}},E^A)\cong \xi(\lambda(x))\cong \C.$$
			\end{itemize}
		\item A morphism is given by a closed bijection $\alpha:P\to P'$ and a sufficiently coherent chain complex morphism $E\to E'$ above it.
	\end{itemize}
This can be stabilized via extension pairs to a stable category $FC(BU(1)_\bullet,\xi).$ The full subcategory $FC^h(BU(1)_\bullet,\xi)$  is given by only considering acyclic chain complexes.
\end{Definition}

\begin{Proposition}[5.3.4 and 5.3.5 in \cite{Igusa2}] The multiple mapping cylinder construction gives weak homotopy equivalences
	$$Wh_\bullet(\C)\simeq FC(BU(1)_\bullet,\xi)$$
and
	$$Wh_\bullet^h(\C)\simeq FC^h(BU(1)_\bullet,\xi).$$
\end{Proposition}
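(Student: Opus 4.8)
The plan is to realise the multiple mapping cylinder as an explicit simplicial functor $Z_\bullet\colon Wh_\bullet(\C)\to FC(BU(1)_\bullet,\xi)$ and then prove it is a weak equivalence one simplicial degree at a time, finally dealing with stabilisation by a colimit argument. \textbf{First} I would upgrade the multiple mapping cylinder $Z_q$ of the preceding definition to a functor on all of $Wh_q(\C)$: on an object $(C_*,P)$ the underlying $\Lambda P$-filtered complex is $Z_q(C_*)$, filtered by the sub-cones $E^A$ spanned by $A\subset P$; the cohomology classes $\lambda_A(x)$ are the tautological generators of the one-dimensional relative homology groups produced by an iterated mapping-cone filtration, and the map $\gamma\colon P\to U(1)^k$ records the monomial structure constants of the complexes $C_*(l)$ and of their connecting isomorphisms --- this is precisely where the twist $\xi$ enters. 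Then I would check against the definitions that $Z$ takes monomial chain morphisms to morphisms of filtered complexes, intertwines the face and degeneracy operators (delete/repeat a corner on the $Wh$ side versus restrict/stabilise the cone on the $FC$ side), and sends expansion pairs to expansion pairs, so that $Z$ descends to the stabilised categories and restricts to $Wh^h_\bullet(\C)\to FC^h(BU(1)_\bullet,\xi)$, a mapping cylinder of an acyclic system being acyclic.

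\textbf{Next} I would reduce to the bounded-rank pieces. Both stabilised categories are assembled from the subcategories $Wh_\bullet(\C,n_*)$ and $FC(BU(1)_\bullet,\xi,n_*)$ by adjoining the expansion morphisms, and $Z$ is compatible with this; since one can identify the classifying space of the stabilised category with a homotopy colimit of the classifying spaces of the bounded-rank pieces along the expansions, it suffices to show $|Wh_\bullet(\C,n_*)|\to|FC(BU(1)_\bullet,\xi,n_*)|$ is a weak equivalence for every $n_*$ (and similarly for the acyclic variants). Regarding each side as a bisimplicial set --- the nerve in the categorical direction, the $q$-direction remaining simplicial --- it is then enough to prove that for each fixed $q$ the functor $Z_q\colon Wh_q(\C,n_*)\to FC(BU(1)_q,\xi,n_*)$ is a weak equivalence on nerves, after which realising in $q$ concludes the reduction. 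For $q=0$ this is immediate: $Z_0$ is the identity under the evident isomorphism $FC(BU(1)_0,\xi,n_*)\cong Wh_0(\C,n_*)$, since a $\Lambda P$-filtered complex with one-dimensional graded pieces in the prescribed degrees is nothing but a chain complex on $\C^{P_\bullet}$ and $\gamma$ is vacuous because $BU(1)_0=*$.

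\textbf{The core step} is to show each $Z_q$ with $q\geq1$ is a weak equivalence, for which I would invoke Quillen's Theorem A: it suffices to show that for every object $(E,P,\gamma)\in FC(BU(1)_q,\xi,n_*)$ the comma category $Z_q/(E,P,\gamma)$ is contractible. An object there is a $Wh_q$-object $(C_*,P)$ together with a morphism $Z_q(C_*)\to(E,P,\gamma)$, that is, a presentation of (a subquotient of) $E$ as a multiple mapping cylinder. The claim is that such presentations form a contractible category: building them up the subset poset $2^P$ one element at a time and up the coherence degree $q$ one step at a time, each successive choice is the choice of a nullhomotopy or of compatible splitting data taking values in an affine set over $\C$, so the comma category is an iterated ``contractible extension''. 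Equivalently, the forgetful functor from multiple mapping cylinders to their underlying $\Lambda P$-filtered complexes is a trivial fibration, proved by the standard obstruction argument for mapping cylinders carried out filtration-wise and simplex-wise. (Alternatively one could try to write down a strict homotopy inverse $FC\to Wh$ by choosing, for each $A\subset P$, a coherent deformation of $E$ onto $E^A$, but making the choices natural is exactly what Theorem A lets one avoid.) Restricting to acyclic complexes throughout changes nothing, yielding the second equivalence.

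\textbf{The hard part} will be the contractibility in the core step: organising the triple induction --- over the subset poset $2^P$, over the simplicial degree $q$, and over the internal chain degree --- so that the pieces are peeled off in a mutually compatible order, and verifying at each stage that the set of remaining choices is genuinely affine over $\C$ (hence contractible) rather than merely nonempty or connected. Everything else --- functoriality, compatibility with faces, degeneracies and expansions, the colimit reduction, and the degree-zero identification --- is routine checking against the definitions of \cite{Igusa2}.
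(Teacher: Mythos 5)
This proposition is not proved in the paper at all: it is imported verbatim from Igusa's book (5.3.4 and 5.3.5 of \cite{Igusa2}), so there is no in-paper argument to compare yours against. Judged on its own terms, your architecture (make the multiple mapping cylinder a simplicial exact functor, reduce to bounded rank, check degreewise in $q$, realize) is sensible and broadly consistent with how Igusa organizes the material, but as written the proposal has a genuine gap exactly where you flag ``the hard part'': the contractibility of the comma categories $Z_q/(E,P,\gamma)$ is asserted via a sketched triple induction whose key claim --- that each successive choice of splitting or nullhomotopy is an affine (hence contractible) set over $\C$ --- is never established. That claim is essentially equivalent to the proposition itself. The tool that makes it work in \cite{Igusa2} is the splitting lemma for $\Lambda P$-filtered complexes (Lemma 4.1.4 there, which this paper invokes later for a different purpose): every filtered complex splits as a strongly acyclic piece plus a piece built from the one-dimensional subquotients, and the space of such splittings is contractible. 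Without citing or reproving something of that strength, your Theorem A application does not close.

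Two smaller points. First, the claim that $Z_0$ is an isomorphism of categories in simplicial degree $0$ is too strong: an object of $FC(BU(1)_0,\xi,n_*)$ is any $\Lambda P$-filtered complex with one-dimensional relative homologies, not literally a complex with $i$-th term $\C^{P_i}$; what is true (and what the definition of $Z_0$ in the paper records) is that $Z_0$ is an inclusion that one must still show is an equivalence, again via the splitting lemma. Second, the reduction ``the stabilised category is a homotopy colimit of the bounded-rank pieces along expansions'' needs an argument: the expansions do not form a filtered diagram of the subcategories $Wh_\bullet(\C,n_*)$ in any obvious way, and the standard fix is the nerve-along-expansions construction $e_\bullet(-)$ that the paper uses in Remark 3.8 for $\Q_I$. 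You should either run that construction on both sides and check $Z$ is compatible with it, or argue directly on the stabilised categories.
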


\begin{Remark} The maps $\gamma:P\to U(1)^k$ for an object $(E,P,\gamma)$ in $FC(BU(1),\xi)$ are needed to encode morphism and expansion structures in $Wh(\C).$
\end{Remark}

\begin{Observation}\label{filteredtorsion} Let $M\to E\to B$ be a smooth bundle with acyclic local system $\F\to E$ and generalized Morse function $f:E\to \R.$ Instead of defining the torsion functor $S_\bullet(B)\to Wh(\C)$ we can also directly define the torsion functor $S_\bullet(B)\to FC(BU(1),\xi)$ by composing with the multiple mapping cylinder construction.
\end{Observation}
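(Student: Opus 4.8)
The plan is to obtain the functor simply by post-composing the torsion functor of the preceding Proposition with the multiple mapping cylinder. First I would invoke that Proposition: the (contractible) choice of $f$ produces a torsion functor $\tau_f\colon S_\bullet(B)\to Wh^h_\bullet(\C)$, and since $C_*(M;\F)$ is acyclic, every fibrewise Morse complex appearing in the construction is acyclic, so the image really lies in the full subcategory $Wh^h_\bullet(\C)$ and not merely in $Wh_\bullet(\C)$. Next I would upgrade the multiple mapping cylinder of Definition 4.1.3 from an assignment on objects to a simplicial functor $Z_\bullet\colon Wh_\bullet(\C)\to FC(BU(1)_\bullet,\xi)$: on an object $(C_*,P)\in Wh_q(\C)$ it returns $(Z_q(C_*),P,\gamma)$, where $\gamma\colon P\to U(1)^k$ is trivial in degree $0$ and in general records the $U(1)$-scalars carried by the monomial homotopy data of $C_*$; on a morphism, resp.\ on an expansion $f(x^+)=gx^-$, it returns the induced map of iterated mapping cones, feeding the relevant monomial scalars into the $\gamma$-component. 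The desired torsion functor is then the composite $Z_\bullet\circ\tau_f\colon S_\bullet(B)\to FC^h(BU(1)_\bullet,\xi)$, which lands in the acyclic subcategory precisely because $\tau_f$ does.

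The point that actually needs checking — and the main obstacle — is that $Z_\bullet$ is \emph{strictly} compatible with the simplicial structure, i.e.\ with the face and degeneracy maps, which on $Wh_\bullet(\C)$ delete or repeat entries of the object tuple. The cone construction respects these operations only up to a canonical filtered chain isomorphism (for a $1$-simplex $f\colon(\C^P)_0\to(\C^P)_1$ the faces $d_1,d_0$ recover $(\C^P)_0$ and $(\C^P)_1$, which sit inside $Z_1(f)=\cone(f)$ as the canonical sub and the canonical quotient), so one must verify that the definitions of $Wh_\bullet(\C)$ and of $FC(BU(1)_\bullet,\xi)$ are set up so that these identifications hold on the nose. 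This is exactly the content of the proof of Proposition 5.3.4--5.3.5 of \cite{Igusa2}, where $Z_\bullet$ is exhibited as a weak homotopy equivalence of simplicial categories, and restriction to acyclic objects gives $Wh^h_\bullet(\C)\xrightarrow{\sim}FC^h(BU(1)_\bullet,\xi)$. Any residual strictification nuisance can be absorbed into the same rectification $\curlyC_\bullet(B)\to S_\bullet(B)$ already used to make $\tau_f$ simplicial, so it does not affect the homotopy class of the resulting map. Finally, I would remark that under the equivalence of that Proposition the composite $Z_\bullet\circ\tau_f$ represents the same homotopy class in $|FC^h(BU(1)_\bullet,\xi)|\simeq|Wh^h_\bullet(\C)|$ as $\tau_f$, so passing to filtered chain complexes directly loses no information.
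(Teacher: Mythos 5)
Your proposal is correct and follows the same route the paper (implicitly) takes: the Observation is simply the composition of the torsion functor $S_\bullet(B)\to Wh^h_\bullet(\C)$ from the preceding Proposition with the multiple mapping cylinder functor $Wh_\bullet(\C)\to FC(BU(1)_\bullet,\xi)$ of Proposition 5.3.4--5.3.5 of \cite{Igusa2}, with any simplicial strictification absorbed into the rectification $\curlyC_\bullet(B)\to S_\bullet(B)$. Your additional remarks on acyclicity forcing the image into the $h$-subcategories and on the face/degeneracy compatibility of the cone construction are exactly the points the paper delegates to the cited results.
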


\begin{Remark} Definition \ref{filteredchain} can be generalized by replacing $BU(1)$ with any simplicial set $X$ with a functor $\xi:\simp X\to \textnormal{Vect}_\C.$ We call the resulting category $FC(X,\xi).$
\end{Remark}

\begin{Observation} Again let $M\to E\to B$ be a smooth bundle with acyclic local system $\F\to E$ and generalized Morse function $f:E\to \R.$  Then the local system defines a functor $\xi_\F:\simp E\to \textnormal{Vect}_\C$ and we can factorize the torsion functor through $FC^h(E,\xi_\F)\to FC^h(BU(1),\xi)$ to get

\begin{Definition} The construction above gives the Igusa-Klein torsion as a map
	$$S_\bullet(B)\to FC^h(E,\xi_\F).$$
\end{Definition}
\end{Observation}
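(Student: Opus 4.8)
The plan is to write the Igusa--Klein torsion functor $S_\bullet(B)\to FC^h(BU(1),\xi)$ of Observation~\ref{filteredtorsion} as a composite
$$S_\bullet(B)\longrightarrow FC^h(E,\xi_\F)\longrightarrow FC^h(BU(1),\xi),$$
in which the second arrow is the functor induced on the generalized categories $FC(X,\xi)$ by the classifying (holonomy) map $c_\F\colon E\to BU(1)$ of the local system $\F$. Concretely this functor sends an object $(E',P,\gamma)$ to $(E',P,\simp(c_\F)\circ\gamma)$, leaving the underlying $\Lambda P$-filtered complex and all of its chain data unchanged; since $\xi\circ\simp(c_\F)\cong\xi_\F$ it is well defined, and it plainly preserves acyclicity, hence restricts to the $h$-subcategories. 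So the whole statement reduces to producing the first arrow, and the point will be that it requires nothing beyond recording, at each stage of Igusa's construction, where the chain data actually lives inside $E$.

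To build $S_\bullet(B)\to FC^h(E,\xi_\F)$ I would revisit the definition of the torsion functor. For a simplex $\sigma\colon\Delta^q\to B$ the object $(C_*,P)\in Wh^h_q(\C)$ has $P$ the set of critical points and ghosts of the restricted generalized Morse function on the fibers over the vertices of $\sigma$, and its chain data are the $\F$-twisted Morse complexes of those fibers together with the homotopies and higher homotopies supplied by the parametrized Morse theory over $\sigma$; the multiple mapping cylinder $Z_q$ then packages this into a $\Lambda P$-filtered complex, and tracking holonomies produces the map $\gamma\colon P\to\simp BU(1)$. The observation is that each element of $P$ is literally a critical point (or inflection point) in a fiber, hence a point of $E$, and that as $x$ ranges over $\sigma(\Delta^q)$ the parametrized Morse data make each such point sweep out a $q$-simplex $\Delta^q\to E$, with the cancellation and birth phenomena over faces of $\sigma$ witnessed by paths and higher simplices in $E$. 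Assembling these I would define a map of graded sets $\gamma_E\colon P\to\simp E$ refining $\gamma$, so that $\simp(c_\F)\circ\gamma_E=\gamma$ on the nose: the line $\gamma(x)$ was by construction nothing but the holonomy of $\F$ along the path $\gamma_E(x)$. The ghosts need the most care, being inflection points rather than critical points; here I would invoke the earlier reduction to sufficiently small simplices, over which a ghost is defined and varies continuously, to see that $\gamma_E$ extends over the ghost generators as well.

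It then remains to check that $(Z_q(C_*),P,\gamma_E)$ is a genuine object of $FC^h(E,\xi_\F)$, i.e.\ that the cohomology classes $\lambda_A(x)$ realize the isomorphisms $H_{\deg x}\big(Z_q(C_*)^{A\sqcup\{x\}},\,Z_q(C_*)^A\big)\cong\xi_\F(\gamma_E(x))$. But in the $\F$-twisted Morse complex the generator at a critical point $p$ of index $i$ is exactly the line $\F_p=\xi_\F(\gamma_E(p))$, and the multiple mapping cylinder is built so that adjoining $p$ contributes precisely that line in degree $i$ to the relative homology; thus the required identification holds literally, and composing with $\xi\circ\simp(c_\F)\cong\xi_\F$ recovers Igusa's identification with $\xi(\lambda(x))$. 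Compatibility of the $\lambda_A$, coherence of the chain maps attached to morphisms and expansions, and functoriality and simpliciality in $\sigma$ (after replacing $S_\bullet(B)$ by the weakly equivalent $\curlyC_\bullet(B)$) all transport verbatim from the $BU(1)$ construction, since only the labelling set has been enriched and no chains have been altered. Post-composing with $FC^h(E,\xi_\F)\to FC^h(BU(1),\xi)$ then returns the functor of Observation~\ref{filteredtorsion}, which is the asserted Definition.

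The main obstacle is the middle step: verifying that the locations of the critical points and, especially, of the ghosts genuinely assemble into a simplicial map $\gamma_E\colon P\to\simp E$ that is coherent with all the higher-homotopy data packaged by the multiple mapping cylinder on the higher simplices of $B$. This is bookkeeping rather than a deep point, but it requires carefully unwinding Igusa's definition of the torsion functor in positive simplicial degree and checking that every abstract chain homotopy it produces is induced by an honest deformation inside $E$; once that is in place the rest of the argument is formal.
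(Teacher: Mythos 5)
Your proposal is correct and follows exactly the route the paper intends: the graded poset $P$ consists of critical and birth--death points, which live in $E$, so the defining map $\gamma$ refines to a map $P\times\Delta^k\to E$ (as the paper itself spells out later, in Section \ref{comparison}), and composing with the classifying map of $\F$ recovers the $BU(1)$-valued data. The paper states this as an observation without proof, and your elaboration, including the care taken with ghosts over sufficiently small simplices, supplies precisely the intended justification.
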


\subsubsection{Extension of the definition}
\label{extendingIK}

At this point we only defined Igusa-Klein torsion for 1-dimensional acyclic local systems $\F\to E.$ We will briefly indicate how to remedy these shortcomings:

Analogously to our definition of $FC(BU(1),\xi)$ one can define $FC(BG,\xi)$ for any group $G$ together with a representation $G\to U(n).$ This creates a natural target for the torsion of any bundle $M\to E\to B$ wit acyclic but not necessarily 1-dimensional local system $\F\to E.$ Furthermore this can also be lifted to a torsion functor
	$$S_\bullet(B)\to FC^h(E,\xi_\F).$$

Now assume that $M\to E\to B$ is a smooth bundle with local system $\F\to E$ such that $\pi_1(B)$ acts trivially on the homologies of the fiber with coefficients $\F.$ However, $\F$ does not have to be acyclic anymore. As mentioned in \ref{filteredtorsion}, this still gives a functor (after choosing a Morse function $f:E\to \R$)
	$$S_\bullet(B)\to FC(E,\xi_\F)$$
but this functor will not factor through $FC^h$ anymore. However, if $\pi_1 B$ acts trivially on the homology of the fiber, we can after stabilization form the alternating mapping cone which will define a map
	$$S_\bullet(B)\to FC^h(E,\xi_\F).$$
We will take this map as the definition of the torsion in the non-acyclic case. Details are to be found in chapter 4.6 of \cite{Igusa2}.

\subsection{The Homotopy Type of $FC(E,\xi_\F)$ and the category $\Q(E)$}

We continue to summarize the constructions of \cite{Igusa2}. So far, we defined the Igusa-Klein torsion functor $S_\bullet(B)\to FC^h(E,\xi_\F)$ for smooth bundles $E\to B$ with local system $\F\to E,$ but so far we have not yet established that $FC^h(E,\xi_\F)$ has the correct homotopy type of the Whitehead space. Hence, in this section we show that $FC^h(E,\xi_\F)$ can be identified as the homotopy fiber of the composition $Q(E_+)\to A(E)\to K(\C).$ After this the cohomological Igusa-Klein torsion is defined just as the smooth torsion as pull-back of the Borel regulators. 

To identify $FC^h(E,\xi)$ as the homotopy fiber we use the Waldhausen fibration theorem \cite{zbMATH03927168}. We will introduce a category $\K$ with two kinds of weak equivalences ($w$-equivalences and $h$-equivalences). Then Waldhausen gives a homotopy fibration sequence (recall that we write $Kw(-)$ to indicate $\Omega|w\S_\bullet(-)|$ for the Waldhausen $\S_\bullet$-construction)
	$$Kw\K^h\to Kw\K\to Kh\K$$
and we identify $Kw\K^h\simeq FC^h(E,\xi_\F),$ $Kw\K\simeq Q(E_+),$ and $Kh\K\simeq K(\C).$ Before defining $\K,$ we will define a simpler category $\Q(E_+)$ with $K\Q(E_+)\simeq Q(E_+)$ which will facilitate the middle equivalence.

 \begin{Definition} Let $X_\bullet$ be a simplicial set, then the category $\Q(X_\bullet)$ of finite graded poset over $X_\bullet$ is defined as follows:
 	\begin{itemize}
		\item An object in degree $q$ is graded poset $P$ together with a map $\gamma:P\to X_k.$ As part of the data, there is a subset of identified expansion pairs $x_-, x_+$ in $P$ over the same point in $X_k$ with $\deg x_+=1+\deg x_-$ and $x_-<x_+.$
		\item Morphisms are pointed maps over $P.$
		\item A morphism $P\to Q$ is a cofibration if it is an order preserving monomorphism.
		\item A morphism $f:P\to Q$ is a $w$-equivalence if its kernel $\ker f=f^{-1}(*)$ is a union of expansion pairs and $f:P\backslash \ker f\to Q$ is a bijection.
	\end{itemize}
This forms a Waldhausen category.
\end{Definition}

\begin{Remark}\label{ordering} In \cite{Igusa2} Igusa considers a slightly different category $\Q_I({X_\bullet})$ in which there are no identified expansion pairs in the objects $P$ (and thereby $w$-equivalences are just bijections). 

Let $\Q_I^0(X_\bullet)$ be the subcategory of graded posets over $X_\bullet$ wit null ordering. Then the retraction $\Q_I(X_\bullet)\to \Q_I^0(\bullet)$ given by forgetting the ordering is a deformation retract. But points of different degrees don't interact in $\Q_I^0(X_\bullet),$ and we get
	$$\Q_I^0(X_\bullet)=\prod_{n\geq 0} \Q^0_{I,n}(X_\bullet),$$
where $\Q^0_n$ denotes the subcategory of isolated degree $n.$ Segal established $Kw\Q^0_n(X_\bullet)\simeq Q(|X_\bullet|)$ \cite{MR0331377}.

This means that $\Q_I(X_\bullet)$ does not have the correct homotopy type. However, we can take the nerve along elementary expansions to get $e_\bullet \Q_I(X_\bullet).$ An object of $e_k\Q_I(X_\bullet)$ is a sequence 
	$$P\to P\vee S_1\to\ldots P\vee S_k,$$
where the $S_k$ are increasing sets of expansion pairs. This mends together the different copies and we have
	$$Kwe_\bullet \Q_I(X_\bullet)\simeq Q(|X_\bullet|).$$
For a more detailed discussion compare \cite{Igusa2} 5.6.5 and following.
\end{Remark}

\begin{Observation} Notice that for any simplicial set $X$ we have
	$$|we_\bullet \Q_I(X_\bullet)|\simeq |w\Q(X_\bullet)|:$$
The left hand side is the geometric realization of the bicategory with objects graded posets over $X$, vertical morphisms expansions, and horizontal morphisms bijections. The right hand side is the geometric realization of the category with objects graded posets over $X$ and morphisms being compositions of bijections and collapses of expansion pairs. Consequently we have
	$$Kwe_\bullet \Q_I(X_\bullet)\simeq Kw\Q(X_\bullet).$$
\end{Observation}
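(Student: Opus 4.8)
The plan is to identify both sides as classifying spaces of explicit categorical objects built from graded posets over $X$, and then compare them through a single forgetful functor whose homotopy‑equivalence I would establish with Quillen's Theorem~A. By the definitions of $e_\bullet$ and of the $w$‑nerve, $|we_\bullet\Q_I(X_\bullet)|$ is the realization of the trisimplicial set whose directions are $X_\bullet$, strings of elementary expansions, and strings of bijections; unwinding this, one sees it is the classifying space of the double category $\mathbb{D}$ over $X$ with objects the graded posets over $X$, horizontal morphisms the bijections over $X$, vertical morphisms the expansions, and squares the evident commuting ones. Dually, $|w\Q(X_\bullet)|$ is the classifying space of the $1$‑category $\curlyC$ with the same objects, now carrying a marked set of expansion pairs, and morphisms the $w$‑equivalences of $\Q(X_\bullet)$, i.e.\ composites of a collapse of marked expansion pairs with a bijection. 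Thus the claim reduces to $|\mathbb{D}|\simeq|\curlyC|$, and the $K$‑theory statement will then follow formally.

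Next I would construct the comparison map $\Phi\colon|\mathbb{D}|\to|\curlyC|$. It is the identity on objects; a string of expansions $P_0\hookrightarrow P_1\hookrightarrow\cdots\hookrightarrow P_k$ is sent to the total poset $P_k$ with the full accumulated set $S_k$ of expansion pairs marked, a vertical expansion $P\hookrightarrow P\vee S$ is sent to the reverse collapse $P\vee S\to P$, and a horizontal bijection is sent to itself. This is not a strict map of the underlying trisimplicial sets: in the expansion direction every face map but the top one preserves the total poset, while the top face replaces $P_k$ by $P_{k-1}$ --- but that discrepancy is precisely the canonical collapse of $S_k\setminus S_{k-1}$, which is a morphism of $\curlyC$, so after passing to the $w$‑nerve (equivalently, working in the double‑category picture) $\Phi$ is well defined on realizations. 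One may also read $\curlyC$ as the vertical localization of $\mathbb{D}$ --- invert the expansions and compose freely with bijections --- and $\Phi$ as the localization map.

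To see that $\Phi$ is a homotopy equivalence I would use Quillen's Theorem~A in its bisimplicial form: it suffices that the homotopy fibre of $\Phi$ over each object $(Q,T)$ of $\curlyC$ is contractible. Up to the contractible contributions of the bijection groupoid and the $X_\bullet$‑direction, that fibre is the nerve of the poset of filtrations of the finite set $T$ by unions of expansion pairs --- a poset with an initial object, namely the trivial one‑step filtration $\emptyset\subset T$ --- hence contractible. This yields $|we_\bullet\Q_I(X_\bullet)|\simeq|w\Q(X_\bullet)|$; and since $\Phi$ is exact, the same argument carried out in each degree of the Waldhausen $\S_\bullet$‑construction, followed by realization and looping, gives $Kwe_\bullet\Q_I(X_\bullet)\simeq Kw\Q(X_\bullet)$.

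The step I expect to be the main obstacle is making $\Phi$ genuinely well defined and computing its fibres correctly: the forgetful assignment is not a strict multisimplicial map exactly in the expansion direction --- this is why the paper insists on calling the left‑hand side a bicategory --- so one must package the mediating collapses coherently, either by doing the double‑category/localization bookkeeping explicitly or by replacing $\Phi$ with an honest zig‑zag through a common resolution; and, relatedly, one must pin down the homotopy fibres used in step three while keeping the $w$‑ and $X_\bullet$‑directions under control.
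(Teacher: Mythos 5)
Your proposal is correct in substance and starts from the same identification the paper uses --- reading $|we_\bullet\Q_I(X_\bullet)|$ as the classifying space of the double category (posets; bijections; expansions) and $|w\Q(X_\bullet)|$ as that of the one-object-class category whose morphisms are collapses followed by bijections --- but where the paper stops there and treats the equivalence as immediate, you build an explicit comparison functor and run Quillen's Theorem~A on its comma categories. Both routes work. What the paper is implicitly leaning on is that every $w$-equivalence in $\Q(X_\bullet)$ factors \emph{uniquely} as (collapse of its kernel) followed by (bijection); this strict factorization means the nerve of $w\Q(X_\bullet)$ is, up to reversing the direction of the expansion coordinate (which does not affect realizations), the Artin--Mazur codiagonal of the double nerve defining $|we_\bullet\Q_I(X_\bullet)|$, and the codiagonal is weakly equivalent to the diagonal. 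That observation dissolves the two difficulties you correctly flag: your $\Phi$ fails to be a strict multisimplicial map precisely because you force all accumulated expansion pairs onto the terminal object of a string, whereas the unique-factorization viewpoint never requires such a choice; and it removes the need to verify contractibility of comma categories for a functor out of a double category, where Theorem~A needs the bisimplicial/$\pi_*$-Kan packaging you allude to. If you do pursue your route, the fibre computation is fine as stated (the poset of filtrations of $T$ by unions of expansion pairs is a cone, hence contractible, whichever endpoint convention you take), and the final $K$-theoretic upgrade via exactness of the comparison on each $\S_n$ is standard. In short: no gap in the idea, but your version carries avoidable coherence bookkeeping that the factorization argument sidesteps.
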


\begin{Corollary} 
	$$Kw\Q(X_\bullet)\simeq Q(|X_\bullet |)$$
\end{Corollary}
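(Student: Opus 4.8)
The plan is to chain together the two equivalences that have just been established in the excerpt. The \emph{Observation} immediately preceding the statement gives
$$Kw\Q(X_\bullet)\simeq Kwe_\bullet \Q_I(X_\bullet),$$
and Remark \ref{ordering} records Igusa's result that
$$Kwe_\bullet \Q_I(X_\bullet)\simeq Q(|X_\bullet|).$$
Composing the two weak equivalences yields exactly the claimed $Kw\Q(X_\bullet)\simeq Q(|X_\bullet|)$, so at the level of bookkeeping the corollary is a one-line deduction. I would state it precisely that way: first invoke the Observation to pass from $\Q(X_\bullet)$ to $e_\bullet\Q_I(X_\bullet)$, then invoke Remark \ref{ordering} (i.e. Igusa 5.6.5 ff., building on Segal \cite{MR0331377}) to identify the latter with $Q(|X_\bullet|)$.

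Before writing that down, though, I would want to make sure the two cited facts are genuinely being used with compatible conventions, since that is the only place anything could go wrong. In particular: the Observation compares the bicategory (posets over $X$, expansions vertically, bijections horizontally) with the ordinary category (posets over $X$, morphisms = composites of bijections and collapses of expansion pairs), and asserts their realizations agree; I would double-check that the $w$-equivalences in $\Q(X_\bullet)$ as defined — kernels that are unions of expansion pairs, bijection off the kernel — are exactly the ``collapses of expansion pairs followed by bijections'' appearing in the Observation, which they are by construction of $\Q(X_\bullet)$. Likewise I would confirm that Remark \ref{ordering}'s statement $Kwe_\bullet\Q_I(X_\bullet)\simeq Q(|X_\bullet|)$ is stated there for arbitrary simplicial sets $X_\bullet$ and not just for $X_\bullet = S_\bullet$ of a manifold, since the corollary is phrased for general $X_\bullet$; the excerpt's phrasing (``for any simplicial set $X$'') in the Observation and the general formulation in Remark \ref{ordering} indicate this is fine.

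The only real mathematical content — the deformation retraction $\Q_I(X_\bullet)\to\Q_I^0(X_\bullet)$ onto null-ordered posets, the splitting $\Q_I^0(X_\bullet)=\prod_n\Q_{I,n}^0(X_\bullet)$, Segal's $Kw\Q_{I,n}^0(X_\bullet)\simeq Q(|X_\bullet|)$, and the observation that taking the nerve along elementary expansions ``mends together'' the copies indexed by $n$ — has already been absorbed into Remark \ref{ordering} and is not to be reproved here. So the expected main obstacle is essentially nil: this is a packaging corollary, and the work lies entirely in the two preceding results. If I were being cautious I would add one sentence pointing out that the composite equivalence is natural in $X_\bullet$ (which it is, since both constituent equivalences are), as that naturality is what will later let us feed $X_\bullet = S_\bullet(E^{\Delta^\bullet})$ or the relevant simplicial model into the Waldhausen fibration argument identifying $Kw\K\simeq Q(E_+)$.
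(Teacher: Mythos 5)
Your proposal is correct and matches the paper's (implicit) argument exactly: the corollary is stated without proof precisely because it follows by composing the equivalence $Kw\Q(X_\bullet)\simeq Kwe_\bullet\Q_I(X_\bullet)$ from the preceding Observation with the equivalence $Kwe_\bullet\Q_I(X_\bullet)\simeq Q(|X_\bullet|)$ from Remark \ref{ordering}. Your additional checks on the compatibility of the $w$-equivalences and the generality in $X_\bullet$ are sensible but not anything the paper needed to spell out.
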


\begin{Remark} As outlined in Remark \ref{ordering}, forgetting the orderings does not change the homotopy type, so from now on we will only work with graded sets. Furthermore we will continue to work with $\Q$ instead of $\Q_I.$ 
\end{Remark}

\begin{Definition} Let $X_\bullet$ be a simplicial set and $\xi:\simp X\to \textnormal{Vect}_\C$ a functor. 
	\begin{itemize} 
		\item The simplicial category $\K(X_\bullet,\xi)$ has similar objects  to $FC(X_\bullet, \xi)$, that is pairs $(E,P)$ where $P$ is a graded set and $E$ is a filtered chain complex over $P.$ Additionally there should be set of identified expansion pairs $x_-, x_+$ in $P$ and we demand that $E$ splits as
			$$E^{P\backslash\{x_-,x_+\}}\oplus E^{\{x_-,x_+\}},$$
		where the latter is given by $\C$ in degree $\deg x_-$ and $\deg x_+$ connected by the identity. 
		
		\item Morphisms $(f,\alpha):(E,P)\to (E',P')$ are again given by morphisms $\alpha$ of graded sets  (so morphisms in $\Q(X_\bullet)$) and chain morphisms $f$ over them.
		\item Cofibrations in $\K(X_\bullet,\xi)$ are cofibrations in $\Q(X_\bullet)$ covered by chain isomorphisms.
		\item A morphism $(f,\alpha)$ is a $w$-equivalence if $\alpha$ is a $w$-equivalence in $\Q(X_\bullet).$
		\item A morphism $(f,\alpha)$ is an $h$-equivalence if $f$ is a chain homotopy equivalence.
	\end{itemize}
\end{Definition}

\begin{Remark} Again in \cite{Igusa2} Igusa defines $\K_I(X_\bullet)$ without identification of the expansion pairs. As in Remark \ref{ordering}, one can then form $e_\bullet \K_I(X_\bullet)$ and this yield the same results as our $\K(X_\bullet).$
\end{Remark}

\begin{Observation} Since for any object $(E,P)$ of $\K(X_\bullet,\xi)$ the graded set $P$ acts as a ``homological basis'' for $E,$ it is clear that every $w$-equivalence is also an $h$-equivalence.
\end{Observation}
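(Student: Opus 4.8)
The plan is to unwind the definition of a $w$-equivalence in $\K(X_\bullet,\xi)$ and read off that the covering chain map is \emph{forced} to be a chain homotopy equivalence, using that $\C$ is a field. First I would recall that $(f,\alpha)$ is a $w$-equivalence precisely when $\alpha\colon P\to P'$ is a $w$-equivalence in $\Q(X_\bullet)$, i.e.\ $\ker\alpha=\alpha^{-1}(*)$ is a union of identified expansion pairs and $\alpha$ restricts to a bijection of graded sets $P\setminus\ker\alpha\xrightarrow{\ \cong\ }P'$.

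Next I would exploit the splitting that is built into the objects of $\K(X_\bullet,\xi)$: for each identified expansion pair $\{x_-,x_+\}$ the filtered complex $E$ splits off the summand $E^{\{x_-,x_+\}}$, which is the two-term acyclic complex $\C\xrightarrow{\ \id\ }\C$ placed in degrees $\deg x_-$ and $\deg x_+=1+\deg x_-$. Since the expansion pairs comprising $\ker\alpha$ are mutually unrelated, these splittings can be carried out simultaneously, so $E\cong\bar E\oplus A$ with $\bar E:=E^{P\setminus\ker\alpha}$ and $A$ a finite direct sum of contractible two-term complexes; in particular the projection $q\colon E\twoheadrightarrow\bar E$ is a chain homotopy equivalence. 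Because $f$ lies over $\alpha$ and $\alpha$ sends the expansion-pair elements of $\ker\alpha$ to the basepoint, $f$ annihilates the summand $A$, so it factors as $f=\bar f\circ q$ for a unique chain map $\bar f\colon\bar E\to E'$ lying over the bijection $P\setminus\ker\alpha\xrightarrow{\cong}P'$.

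It then remains to see that $\bar f$ is a chain homotopy equivalence, and this is where the phrase ``$P$ acts as a homological basis'' does its work. The coherence conditions imposed on morphisms of $\K(X_\bullet,\xi)$ (following Igusa \cite{Igusa2}) force $\bar f$ to respect the distinguishing cohomology classes $\lambda_A(x)$ up to the prescribed $\xi$-identifications, so for every $A\subset P\setminus\ker\alpha$ and every $x$ the induced map on relative homology $H_{\deg x}(\bar E^{A\sqcup\{x\}},\bar E^A)\to H_{\deg x}(E'^{\,\alpha A\sqcup\{\alpha x\}},E'^{\,\alpha A})$ is an isomorphism. An induction over the finite filtration, comparing the relevant long exact sequences via the five lemma, upgrades this to the statement that $\bar f$ is a quasi-isomorphism. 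Finally, since $\C$ is a field, every chain complex of $\C$-vector spaces is chain homotopy equivalent to its homology, so any quasi-isomorphism between such complexes is a chain homotopy equivalence; hence $\bar f$, and therefore $f=\bar f\circ q$, is a chain homotopy equivalence, i.e.\ $(f,\alpha)$ is an $h$-equivalence.

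I expect the only genuine subtlety to be in the third paragraph: extracting from Igusa's definition of a ``sufficiently coherent chain morphism over $\alpha$'' the precise fact that $\bar f$ is compatible with the classes $\lambda_A(x)$, and keeping track of the $\xi$-coefficients so that ``compatible'' really does mean ``an isomorphism on each relative homology'' rather than merely ``well defined''. The remaining ingredients — splitting off the acyclic expansion-pair summands and passing from quasi-isomorphism to chain homotopy equivalence over a field — are routine.
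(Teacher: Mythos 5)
Your argument is correct and is exactly the elaboration of the paper's one-line justification: the paper simply asserts the claim from the fact that $P$ is a ``homological basis'' for $E$, which is precisely your chain of reasoning (split off the acyclic expansion-pair summands in $\ker\alpha$, use that a coherent chain map over the residual bijection is an isomorphism on each relative homology, and conclude via the five lemma and the fact that over the field $\C$ a quasi-isomorphism is a chain homotopy equivalence). No genuinely different route is taken in the paper, so nothing further needs comparison.
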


Based on this observation and Igusa's work showing that $\K_I(X_\bullet,\xi)$ has a mapping cylinder construction satisfying Waldhausen's cylinder axioms we get immediately:

\begin{Theorem}[Based on Waldhausen \cite{zbMATH03927168}]\label{fibthm} The sequence
	$$Kw\K(X_\bullet,\xi)^h\to Kw\K(X_\bullet,\xi)\to Kh\K(X_\bullet,\xi)$$
is a homotopy fibration with canonical contracting homotopy given by the unique natural transformation from the composition $w\K(X_\bullet,\xi)^h\to h\K(X_\bullet,\xi)$  to the constant functor on the final object.  Here the superscript $-^h$ indicates $h$-trivial objects.
\end{Theorem}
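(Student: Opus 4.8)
The plan is to obtain this from Waldhausen's generic fibration theorem (\cite{zbMATH03927168}, Theorem~1.6.4), applied to the (simplicial) Waldhausen category $\K(X_\bullet,\xi)$ with its cofibrations and with the two nested classes of weak equivalences: the $w$-equivalences, in the role of Waldhausen's finer class, and the coarser $h$-equivalences, in the role of his $w$; the Observation above is precisely the nestedness $w\subseteq h$. The subcategory of objects acyclic for the coarser class is the full subcategory $\K(X_\bullet,\xi)^h$ of $h$-trivial objects. Granting the axioms discussed below, Waldhausen's theorem gives that the square obtained by applying $w\S_\bullet(-)$ and $h\S_\bullet(-)$ to $\K(X_\bullet,\xi)$ and to $\K(X_\bullet,\xi)^h$ is homotopy cartesian with $|h\S_\bullet\K(X_\bullet,\xi)^h|$ contractible, so that
$$w\S_\bullet\K(X_\bullet,\xi)^h\to w\S_\bullet\K(X_\bullet,\xi)\to h\S_\bullet\K(X_\bullet,\xi)$$
is a homotopy fibration; looping once then yields the asserted sequence $Kw\K(X_\bullet,\xi)^h\to Kw\K(X_\bullet,\xi)\to Kh\K(X_\bullet,\xi)$.

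Next I would verify Waldhausen's hypotheses for the $h$-equivalences. Saturation is immediate, since $(f,\alpha)$ is an $h$-equivalence exactly when the chain map $f$ is a chain homotopy equivalence, and these satisfy two-out-of-three while the underlying maps of graded sets compose without constraint. For the extension axiom: a cofibration in $\K(X_\bullet,\xi)$ covers a cofibration of graded posets, and the associated short exact sequences of underlying graded $\C$-modules are automatically degreewise split, so the classical gluing lemma for chain homotopy equivalences shows that a map of cofibration sequences which is an $h$-equivalence on the sub- and quotient-objects is one in the middle. The cylinder functor is the substantive point, and it is Igusa's: in \cite{Igusa2} he equips $\K_I(X_\bullet,\xi)$ with a mapping-cylinder construction — the ordinary mapping cylinder of chain maps together with the evident cylinder on graded sets — and checks Waldhausen's cylinder axioms with respect to $h$-equivalences. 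Identifying expansion pairs leaves all the chain-level data untouched, so this construction descends to $\K(X_\bullet,\xi)$ (compare the Remark comparing $\K_I$ with $\K$).

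For the canonical contracting homotopy I would observe that $\K(X_\bullet,\xi)$ has a zero object $0$, namely the empty graded poset carrying the zero complex. An object $(E,P)$ lies in $\K(X_\bullet,\xi)^h$ exactly when $E$ is chain contractible, equivalently when the unique morphism $(E,P)\to 0$ is an $h$-equivalence. As $0$ is terminal, these unique morphisms assemble into the unique natural transformation from the functor $w\K(X_\bullet,\xi)^h\to h\K(X_\bullet,\xi)$ to the constant functor at $0$, and this transformation is preserved by every face and degeneracy (all of which fix $0$). Realized, it is a canonical null-homotopy of $|w\S_\bullet\K(X_\bullet,\xi)^h|\to|h\S_\bullet\K(X_\bullet,\xi)|$, and after looping it is the contracting homotopy in the statement; its canonicity is just the uniqueness of that natural transformation.

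The one real obstacle is verifying the cylinder axiom — and, to a lesser degree, the extension axiom — in this hybrid setting, where objects combine a graded poset with a $\Lambda P$-filtered complex and morphisms are only ``sufficiently coherent'' chain maps over maps of graded posets. That work has, however, been carried out by Igusa in \cite{Igusa2}; for the present purpose it suffices to observe that passing from $\K_I$ to $\K$ disturbs none of it, so that the proof comes down to assembling the cited ingredients together with the soft identification of the contracting homotopy above.
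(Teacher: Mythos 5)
Your proposal is correct and follows essentially the same route as the paper, which derives the theorem "immediately" from Waldhausen's fibration theorem together with the observation that every $w$-equivalence is an $h$-equivalence and Igusa's verification of the cylinder axioms for $\K_I(X_\bullet,\xi)$. You simply make explicit the axiom checks and the identification of the contracting homotopy that the paper leaves implicit.
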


There is an obvious forgetful functor $ \K(X_\bullet,\xi)\to \Q(X_\bullet)$ which respects the $w$-equivalences.

\begin{Proposition}[\cite{Igusa2}] The induced functor map 
	$$Kw \K(X_\bullet,\xi)\to  Kw\Q(X_\bullet)$$
is a weak equivalence.
\end{Proposition}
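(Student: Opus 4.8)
The plan is to exhibit the forgetful functor $\K(X_\bullet,\xi)\to\Q(X_\bullet)$ as a weak equivalence on $Kw$ by showing that its fibers are contractible, or more directly, that it is "as good as" a fibration with contractible fibers in the sense needed to apply Waldhausen's approximation-type machinery. The point is that an object of $\K(X_\bullet,\xi)$ over a fixed graded poset $P$ (with its fixed expansion pairs) is a $\Lambda P$-filtered chain complex over $P$ subject to the splitting condition on the expansion pairs, and the $w$-equivalences in $\K$ are detected entirely on the $\Q(X_\bullet)$-component. So the relevant question is whether the "space of filtered chain complex structures on a fixed $P$" is contractible, compatibly with the simplicial and Waldhausen structure.

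First I would make precise the claim that, for fixed $P$, the category (or space) of $\Lambda P$-filtered chain complexes $E$ over $P$ with the prescribed cohomology classes $\lambda_A(x)$ realizing $H_{\deg x}(E^{A\sqcup\{x\}},E^A)\cong\C$, and with chain isomorphisms as morphisms, is contractible. This is essentially the statement that a filtered chain complex over $P$ is determined up to a contractible choice by the combinatorial data of $P$: one can build such an $E$ inductively over the poset $P$ by iterated mapping cones (the multiple mapping cylinder construction of Igusa), and at each stage the extension is unique up to contractible choice because one is choosing cocycles representing prescribed cohomology classes, which form an affine space over a space of coboundaries. This contractibility is exactly the content that makes the multiple mapping cylinder construction well-behaved, and it is implicit in the equivalence $Wh_\bullet(\C)\simeq FC(BU(1)_\bullet,\xi)$ quoted earlier; I would cite that and the analogous statements in \cite{Igusa2} rather than reprove them.

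Given that fiberwise contractibility, the conclusion follows by a standard argument: the functor $\K(X_\bullet,\xi)\to\Q(X_\bullet)$ respects cofibrations and $w$-equivalences, and over each object and each morphism of $\Q(X_\bullet)$ the fiber of filtered-complex structures is nonempty and contractible, so one can lift cofiber sequences and detect $w$-equivalences. Concretely I would invoke Waldhausen's Approximation Theorem \cite{zbMATH03927168}: condition (App 1) holds because a morphism in $\K$ is a $w$-equivalence iff its image in $\Q$ is, by definition; condition (App 2) — that any map from the image of an object in $\K$ factors through a $w$-equivalence in $\K$ — follows from the contractibility of the fibers, since one can always realize a target poset-morphism by a chain map between filtered complexes after replacing the source by a $w$-equivalent model. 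Passing to $\S_\bullet$ and taking loop spaces then gives $Kw\K(X_\bullet,\xi)\simeq Kw\Q(X_\bullet)$.

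The main obstacle I anticipate is verifying condition (App 2) with the expansion-pair and grading bookkeeping intact: one must produce, for a given $\K$-object $(E,P)$ and a $\Q$-morphism $\alpha\colon P\to Q$, a filtered complex $E'$ over $Q$ together with a factorization of $\alpha$ through a $w$-equivalence $(E,P)\to(E'',P'')$ in $\K$ lying over a suitable refinement, and to do this so that the splitting condition on expansion pairs is preserved and so that everything is natural enough to pass through the $\S_\bullet$-construction. This is where the multiple mapping cylinder is really used: it gives a functorial way to build $E'$ from $\alpha$ and $E$, and one checks that the resulting map is a $w$-equivalence because its $\Q$-component is. Modulo the careful but routine tracking of the identified expansion pairs — which is why Igusa works with $\K_I$ and $e_\bullet$ and we have already reconciled that with $\K$ in the preceding remarks — the argument is formal. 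I would therefore present this as: reduce to fiberwise contractibility via the multiple mapping cylinder, then quote Approximation.
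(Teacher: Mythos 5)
The paper does not actually prove this proposition --- it is imported wholesale from \cite{Igusa2} with no argument given --- so there is no in-paper proof to measure you against. Your strategy (the filtered chain complex is contractible extra structure on the underlying graded poset, so the forgetful functor is an equivalence by an approximation-type argument) is indeed the idea behind Igusa's proof, and your observation that (App 1) holds by definition, since a morphism of $\K(X_\bullet,\xi)$ is declared a $w$-equivalence exactly when its image in $\Q(X_\bullet)$ is, is correct.

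Two places where your sketch leaves the real content undone. First, the ``fiberwise contractibility'' is subtler than an affine-space-of-cocycles argument suggests: for a fixed $P$ in a fixed simplicial degree, two $\Lambda P$-filtered complexes need not be isomorphic (already for a two-element poset the extension class can be zero or nonzero), so the fiber is not contractible degreewise with only chain isomorphisms as morphisms; the contraction genuinely uses the simplicial direction of $\K_\bullet(X_\bullet,\xi)$, where higher simplices supply the homotopies connecting different filtered structures. You gesture at this (``compatibly with the simplicial\ldots structure'') but it is the crux, not a compatibility check. Second, in verifying (App 2) the naive candidate --- put the free complex $\C^Q$ over $Q$ and take a mapping cylinder over $T(\alpha)=P\vee Q\vee\Sigma\delta P$ --- runs into the splitting condition on identified expansion pairs: the pairs $\{x,\sigma x\}$ for $x\in\delta P$ become identified expansion pairs of $T(\alpha)$, so the lifted object must split off $E^{\{x\}}\oplus E^{\{x\}}[1]$ as a direct summand, which the given filtration on $E$ does not automatically provide. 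This is exactly why Igusa's mapping cylinder for filtered complexes has to be constructed with care, and it is the step your outline defers to ``routine tracking.'' With those two points supplied (or explicitly quoted from the relevant lemmas of \cite{Igusa2}, which is what this paper in effect does), the argument is complete.
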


Furthermore we get a functor 
	\begin{eqnarray}\label{map}
		\K(X_\bullet,\xi)\to Ch(\P^{hf}(\C))
	\end{eqnarray}
by forgetting the filtrations.

\begin{Proposition} The induced map
	$$Kh\K(X_\bullet,\xi)\to Kh Ch(\P^{hf}(\C))=K(\C)$$
is a weak equivalence.
\end{Proposition}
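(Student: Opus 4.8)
The plan is to exhibit the functor (\ref{map}) as inducing a weak equivalence on $Kh$-theory by identifying both sides via Waldhausen's approximation theorem. The key point is that, with respect to the $h$-equivalences, an object $(E,P)$ of $\K(X_\bullet,\xi)$ is just the filtered chain complex $E$ together with some extra bookkeeping data ($P$, the filtration, the classes $\lambda_A(x)$, and the identified expansion pairs), all of which is homotopically invisible: the forgetful functor remembers only the underlying homotopy-finite chain complex of projective $\C$-modules, and the $h$-equivalences in $\K(X_\bullet,\xi)$ are exactly the morphisms sent to chain homotopy equivalences. So we are comparing two Waldhausen categories with the same weak equivalences after forgetting, and the content is that the forgetful functor is ``essentially surjective and full enough'' in Waldhausen's sense.

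First I would check that (\ref{map}) is an exact functor: it preserves the zero object, sends cofibrations (cofibrations in $\Q(X_\bullet)$ covered by chain isomorphisms) to injections of chain complexes that are degreewise split with projective cokernel, hence to cofibrations in $Ch(\P^{hf}(\C))$, and preserves the relevant pushouts. It sends $h$-equivalences to chain homotopy equivalences by definition. Next I would verify Waldhausen's approximation properties for the pair $(\K(X_\bullet,\xi),h)\to (Ch(\P^{hf}(\C)),h)$. Property (App 1) — a morphism in the source is an $h$-equivalence iff its image is — holds essentially by the definition of the $h$-equivalences. For (App 2), given an object $(E,P)$ and a chain map $E\to D$ to a homotopy-finite complex $D$ of projectives, I would factor it as an $h$-cofibration followed by an $h$-equivalence in the target; the task is to realize the cofibration part inside $\K(X_\bullet,\xi)$, i.e.\ to equip the cone/cylinder complex with a compatible filtration, grading set, line-bundle data $\gamma$, and a homological basis. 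This is where the stabilization by expansion pairs enters: after adding enough acyclic expansion pairs one can always enlarge $P$ so that its elements furnish a basis of the homology of the enlarged complex in each degree, and choose the classes $\lambda_A(x)$ accordingly, all over a single point of $X_\bullet$ (so $\xi$ is trivialized there and the $\xi(\lambda(x))\cong\C$ condition is met). The main obstacle is precisely this construction: producing, functorially enough for the approximation theorem, the filtered-chain-complex-with-basis structure realizing an arbitrary map to $Ch(\P^{hf}(\C))$, and checking that the mapping cylinder construction Igusa uses to verify the cylinder axioms is compatible with these choices.

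Once approximation is established, Waldhausen's approximation theorem \cite{zbMATH03927168} gives that (\ref{map}) induces a homotopy equivalence $h\S_\bullet\K(X_\bullet,\xi)\to h\S_\bullet Ch(\P^{hf}(\C))$, hence the claimed weak equivalence $Kh\K(X_\bullet,\xi)\to KhCh(\P^{hf}(\C))=K(\C)$; the last identification $KhCh(\P^{hf}(\C))\simeq K(\C)$ is recalled in the excerpt. Alternatively, if one prefers to avoid checking (App 2) head-on, one can instead factor the forgetful functor through the intermediate categories Igusa uses — first forgetting $\gamma$ and the $\xi$-data to land in filtered complexes over a bare graded set, then collapsing the filtration — and invoke the homotopy equivalences $Wh^h_\bullet(\C)\simeq FC^h(BU(1)_\bullet,\xi)$ and the known identification $Kh$ of filtered acyclic-relative complexes with $K(\C)$ already present in \cite{Igusa2}; I expect the approximation-theorem route to be cleaner to write, with the realization step above as the one genuinely technical ingredient.
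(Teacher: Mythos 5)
The paper does not actually prove this proposition: it is stated as part of the summary of Igusa's constructions and is deferred to \cite{Igusa2} (where the analogous statement is established for the categories $\K_I$ before passing to the expansion nerve), so there is no in-text argument to compare yours against line by line. Taken on its own terms, your strategy is sound and is the standard way one would prove such a statement: the forgetful functor \eqref{map} is exact, reflects $h$-equivalences by definition, the cylinder functor on $\K(X_\bullet,\xi)$ is already supplied by Igusa, and the whole content sits in Waldhausen's approximation property (App~2). You have correctly isolated the one genuinely technical ingredient, namely realizing the factorization of a map $E\to D$ inside $\K(X_\bullet,\xi)$. I would only ask you to make two points explicit there. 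First, you must begin by replacing the homotopy finitely dominated target $D$ by a bounded complex of finitely generated free $\C$-modules (possible since there is no finiteness obstruction over $\C$); only then does the mapping cylinder $T(u)=E\oplus E[1]\oplus D$ acquire an evident cell-by-cell filtration extending that of $E$, with the new generators adjoined to $P$ one degree at a time so that each relative homology $H_{\deg x}(E'^{A\sqcup\{x\}},E'^A)$ is a single copy of $\C$. Second, the condition $H_{\deg x}(E^{A\sqcup\{x\}},E^A)\cong\xi(\gamma(x))\cong\C$ forces $\xi$ to be rank one on the chosen point of $X_\bullet$; in the situation of the paper ($\xi=\xi_\F$ for a rank-one local system, or the universal line bundle over $BU(1)$) this is automatic, and your device of placing all new generators over a single vertex where $\xi$ is trivialized handles it. With those two clarifications the approximation-theorem argument goes through, and your fallback route through $FC^h(BU(1)_\bullet,\xi)$ is also consistent with what \cite{Igusa2} does; neither reading reveals a gap that would invalidate the claim.
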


It is clear that
	$$w\K(X,\xi)^h\simeq FC^h(X_\bullet,\xi).$$
So the last ingredient to finish the characterization of $FC^h$ as a homotopy fiber is the following:

\begin{Proposition}[\cite{Igusa2}]  The natural map (given by Remark \ref{mapsinto})
	$$|w\K(X_\bullet,\xi)|\to Kw\K(X_\bullet,\xi)$$
is a weak equivalence.
\end{Proposition}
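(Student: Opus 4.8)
The plan is to recognize $(\K(X_\bullet,\xi),w)$ as a Waldhausen category whose $\S_\bullet$-construction does not change the homotopy type, i.e.\ for which the canonical comparison map of Remark \ref{mapsinto} is already a weak equivalence. This will rest on two points: (i) the functors ``pass to successive subquotients'' exhibit $w\S_n\K(X_\bullet,\xi)$ as weakly equivalent to $\bigl(w\K(X_\bullet,\xi)\bigr)^{\times n}$ for all $n$ (additivity), and (ii) the monoid $\bigl(\pi_0|w\K(X_\bullet,\xi)|,\oplus\bigr)$ is a group. First I would record the monoidal structure: $\oplus$ is disjoint union of graded (pointed) sets, direct sum of $\Lambda P$-filtered complexes, and union of the distinguished expansion pairs; it is a coproduct and is biexact, so $|w\K(X_\bullet,\xi)|$ becomes a topological monoid, and the evident section $\bigl(w\K(X_\bullet,\xi)\bigr)^{\times n}\to w\S_n\K(X_\bullet,\xi)$ (taking $(A_1,\dots,A_n)$ to $A_1\cof A_1\oplus A_2\cof\cdots$) assembles into a map of simplicial objects $B\bigl(|w\K(X_\bullet,\xi)|,\oplus\bigr)\to|w\S_\bullet\K(X_\bullet,\xi)|$.

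For (i) I would check that an object $A_0\cof A_1\cof\cdots\cof A_n$ of $\S_n\K(X_\bullet,\xi)$ is connected, through a natural chain of $w$-equivalences, to the image under the section of its sequence of subquotients $(A_i/A_{i-1})_{i=1}^{n}$. On underlying graded posets this is immediate after the deformation retraction onto null orderings (Remark \ref{ordering}): a flag of subposets is the ordered disjoint union of its successive complements. The filtered-complex datum is determined by the poset up to coherent homotopy, so the comparison with the associated graded is implemented by morphisms of $\K(X_\bullet,\xi)$ lying over identity poset maps, which are $w$-equivalences by definition. Granting this, $B\bigl(|w\K(X_\bullet,\xi)|,\oplus\bigr)\to|w\S_\bullet\K(X_\bullet,\xi)|$ is a levelwise, hence a global, equivalence, and therefore $Kw\K(X_\bullet,\xi)=\Omega|w\S_\bullet\K(X_\bullet,\xi)|$ is the group completion of the topological monoid $|w\K(X_\bullet,\xi)|$. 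For (ii), the crucial feature is that a disjoint union of expansion pairs is $w$-equivalent to $0$ and that the objects are already stabilized along expansions; this is exactly the mechanism by which, as recorded in Remark \ref{ordering} and the observation following it, $Kw\Q(X_\bullet)\simeq Q(|X_\bullet|)$, and it makes $\pi_0|w\K(X_\bullet,\xi)|$ a group. A grouplike topological monoid maps by a weak equivalence to its own group completion, and that equivalence is precisely the map of Remark \ref{mapsinto}; this proves the Proposition. (Compatibility of the forgetful functor to $\Q(X_\bullet)$ with everything above, combined with the Corollary, moreover identifies $|w\K(X_\bullet,\xi)|\simeq Kw\K(X_\bullet,\xi)\simeq Q(|X_\bullet|)$.)

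The main obstacle is the additivity step (i): making rigorous that the passage to the associated graded of a filtered object of $\K(X_\bullet,\xi)$ is joined to the identity through $w$-equivalences. This is where the ``sufficiently coherent'' chain-morphism data in the definition of the category, and the multiple-mapping-cylinder bookkeeping from Section \ref{IKtorsion}, carry the argument; equivalently, one must apply Waldhausen's additivity theorem in a form that pins $Kw\K(X_\bullet,\xi)$ down as the $\oplus$-group completion of $|w\K(X_\bullet,\xi)|$ and not something larger. Once that is available, (ii) and the recognition principle for grouplike monoids are formal, the remaining care being only to keep the orderings and the expansion (nerve-along-elementary-expansions) structure honest throughout, for which Remark \ref{ordering} and the observation after it may be cited verbatim.
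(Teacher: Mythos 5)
Your step (ii) and the appeal to the recognition principle for grouplike monoids are fine, and they are exactly the mechanism by which the analogous statement for $\Q(X_\bullet)$ is obtained (expansion pairs make a point of degree $d+1$ an additive inverse to a point of degree $d$, so $\pi_0$ is a group). The gap is in step (i), and the sentence you use to justify it is where the argument breaks: a flag $A_0\cof A_1\cof\cdots\cof A_n$ in $\K(X_\bullet,\xi)$ is in general \emph{not} connected to the sum of its subquotients by any zigzag of $w$-equivalences. By the Observation that every $w$-equivalence in $\K(X_\bullet,\xi)$ is an $h$-equivalence, any such zigzag preserves the quasi-isomorphism type of the underlying complex, whereas passing to the associated graded of a filtered complex does not. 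Concretely, take $P=\{x_0<x_1\}$ with $\deg x_i=i$ and let $E$ be the acyclic filtered complex $0\to\C\xrightarrow{\;1\;}\C\to 0$; the flag $(\C_{(0)},\{x_0\})\cof (E,P)$ has associated graded $\C_{(0)}\oplus\C_{(1)}$ with zero differential, which has nonzero homology and hence cannot be $w$-equivalent to $E$. The filtered-complex datum is therefore not ``determined by the poset up to coherent homotopy'' in the sense you need, and morphisms over poset bijections (which are essentially monomial isomorphisms) are far too rigid to implement the comparison. Consequently $w\S_n\K(X_\bullet,\xi)\to\bigl(w\K(X_\bullet,\xi)\bigr)^{\times n}$ is not a levelwise equivalence, and your argument does not exhibit $Kw\K(X_\bullet,\xi)$ as the $\oplus$-group completion of $|w\K(X_\bullet,\xi)|$. (Waldhausen's additivity theorem only gives the splitting after a further application of $\S_\bullet$, which is not enough here.)

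The Proposition is nevertheless true, and the route that works (this is essentially what is being cited from \cite{Igusa2}) is to push the whole question down to $\Q(X_\bullet)$, where your step (i) \emph{does} hold because flags of pointed graded sets split on the nose. One shows that the forgetful functor induces equivalences both on classifying spaces $|w\K(X_\bullet,\xi)|\to|w\Q(X_\bullet)|$ (the chain-level data over a fixed poset is a contractible choice) and on K-theory $Kw\K(X_\bullet,\xi)\to Kw\Q(X_\bullet)$ (the Proposition stated immediately before this one), and then runs your group-completion and grouplike argument for $\Q(X_\bullet)$ alone. In the resulting commuting square, two-out-of-three forces $|w\K(X_\bullet,\xi)|\to Kw\K(X_\bullet,\xi)$ to be an equivalence. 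If you want to salvage your direct approach, you would have to replace ``associated graded'' by something like the multiple mapping cylinder filtration and argue with $h$-equivalences rather than $w$-equivalences, but that changes which K-theory you compute.
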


\renewcommand{\R}{\mathcal{R}}

\renewcommand{\X}[1][\bullet]{X^{\Delta^{#1}}}
\newcommand{\simE}[1][\bullet]{E^{\Delta^{#1}}}

\section{A combinatorical model for the Becker-Gottlieb transfer}
\label{comparison}

In the previous sections we defined the smooth and Igusa-Klein torsion of a smooth manifold bundle $E\to B$ with local system $\F\to E.$ Both were given as maps into the Whitehead space $Wh_\F(E):$ The smooth torsion was given as a lift of the Becker-Gottlieb transfer $p^!:|S_\bullet(B)|\to Q(E_+)$ whereas the Igusa-Klein torsion is directly constructed as a map $|S_\bullet(B)|\to Wh(E).$ Composition with the inclusion of the fiber will give a transfer map $p^!_{IK}:|S_\bullet(B)|\to Q(E_+),$ and --of course-- the Igusa-Klein torsion map is a lift of this. We will show that these two transfer maps have the same homotopy type, ultimately leading to a proof of Theorem \ref{mainthm}.

First of all, recall that the model used in the previous section is $|FC^h(E,\xi_\F)|\simeq Wh_\F(E).$ According to Observation \ref{filteredtorsion} the Igusa-Klein torsion 
	$$\tau_{IK}:|S_\bullet(B)|\to |FC^h(E,\xi_\F)|$$
is given by sending a simplex $\sigma:\Delta^k\to B$ to the pair $(P, C)$ where $P$ is the graded poset of critical and twice the birth-death points of a chosen fiber-wise generalized Morse-function on $E,$ the defining map is given by the map $P\times \Delta^k\to E$ obtained by lifting $\sigma:\Delta^k\to B$ to the level of critical and birth-death points (this is not necessarily injective as two critical points can meet in a birth-death point), and finally the filtered chain complex $C$ is given by taking the multiple mapping cylinder construction of the Morse-complexes over $\sigma$ with coefficients $\xi.$ The Igusa-Klein transfer
	$$p_{IK}:|S_\bullet(B)|\to Kw\K(E^{\Delta^\bullet},\xi_\F)$$
is then given in view of Theorem \ref{fibthm} as the composition of $\tau_{IK}$ with the maps
	$$|FC^h(E^{\Delta^\bullet},\xi_\F)^h|\simeq |w\K(E^{\Delta^{\bullet}},\xi_\F)^h|\xrightarrow{\textnormal{Remark \ref{mapsinto}}}Kw\K(E^{\Delta^{\bullet}},\xi_\F)^h\into Kw\K(E^{\Delta^{\bullet}},\xi_\F).$$
By Remark \ref{mapsinto} we can regard this composition as induced by the concrete functor described above.

We will give an alternate model for $Q(E_+)$ and use this to connect the Igusa-Klein transfer $p_{IK}$ to the Becker-Gottlieb transfer $p^!$  given in Section 2 as a lift 
	$$\xymatrix{
		& Q(E^{\Delta^\bullet} _+)\ar[d]\\
		|S_\bullet(B)|\ar[ur]^{p^!}\ar[r]^{p^1_A} & A(E^{\Delta^\bullet})
		}$$
where $p^!_A$ is given by sending $\sigma:\Delta^k\to B$ to the retractive space 
	$$E\times \Delta^k\sqcup \sigma^*E\times \Delta^k\to E\times \Delta^k.$$

\subsection{The expansion category}

We begin with the following definition:

\begin{Definition} Let $X$ be a topological space. We define the expansion category $\E(X^{\Delta^\bullet})$ as follows:
	\begin{itemize} 	
		\item An object in degree $k$ of $\E(X^{\Delta^k})$ is a triple $(P,Y,r),$ where
			\begin{itemize}
				\item $P$ is a graded poset over $X^{\Delta^k}$ with identified expansion pairs.
				\item $Y$ is a $k$-parameter family of relative cell complexes with cells indexed by $P.$ In particular
					$$Y=X\times \Delta^k\sqcup \bigsqcup_{p\in P} I^{\deg{p}}\times \Delta^k/\sim,$$
				where no cell is attached to a cell of equal or higher order. 
				\item Every expansion pair corresponds to two cells in canceling position, directly attached to $X.$
				\item $r:Y\to X\times \Delta^k$ is a retraction respecting the data above.
			\end{itemize}
		\item A morphism is a pair $(\alpha, f):(P,Y,r)\to (P',Y',r')$ where $\alpha:P\to P'$ is a morphism in $\Q(X^{\Delta^k})$ and $f:Y\to Y'$ is a morphism above respecting all the data. This is completely determined by $\alpha$ if such an $f$ exists.
		\item A morphism $(\alpha, f)$ is a cofibration of $\alpha$ is a cofibration in $\Q(X^{\Delta^k}).$
		\item A morphism $(\alpha, f)$ is a $x$-equivalence if $\alpha$ is a $w$-equivalence in $\Q(X^{\Delta^k})$ and $f$ sends every cell in $\ker \alpha$ into $X\times\Delta^k.$
		\item A morphism $(\alpha, f)$ is an $h$-equivalence if $f$ is a homotopy equivalence.
	\end{itemize}
Altogether, this defines a simplicial Waldhausen category.
\end{Definition}

There is a map
	$$x\E(X^{\Delta^\bullet})\to w\Q(X^{\Delta^\bullet})$$
given by forgetting about the cells. Igusa and Waldhausen showed \cite{IgusaWaldhausen}. 

\begin{Proposition}\label{Prop4.2} This map gives a weak equivalence 
	$$Kx\E(X^{\Delta^\bullet})\simeq Kw\Q(X^{\Delta^\bullet})\simeq Q(X_+).$$
\end{Proposition}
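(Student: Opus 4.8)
The plan is to show that forgetting the cell structure defines a functor $\E(X^{\Delta^\bullet}) \to \Q(X^{\Delta^\bullet})$ that respects cofibrations and sends $x$-equivalences to $w$-equivalences, and then to prove that the induced map on $K$-theory is a weak equivalence by exhibiting the fibers of the forgetful map as contractible. The right-hand equivalence $Kw\Q(X^{\Delta^\bullet}) \simeq Q(X_+)$ is already established (the Corollary following the Observation comparing $we_\bullet\Q_I$ with $w\Q$), so the entire content is the left-hand equivalence. First I would verify that the forgetful assignment $(P,Y,r) \mapsto P$ is indeed a well-defined exact functor of Waldhausen categories: a cofibration in $\E$ is by definition one lying over a cofibration in $\Q$, an $x$-equivalence lies over a $w$-equivalence, and pushouts along cofibrations in $\E$ are computed by pushing out the underlying posets and gluing the corresponding cells, so the functor commutes with the relevant pushouts. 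This makes the map $Kx\E(X^{\Delta^\bullet}) \to Kw\Q(X^{\Delta^\bullet})$ well-defined.

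Next I would analyze the homotopy fiber. The key point is that, over a fixed graded poset $P$ over $X^{\Delta^k}$ with its identified expansion pairs, the space of choices of a relative cell complex $Y$ with cells indexed by $P$ and a compatible retraction $r:Y \to X\times\Delta^k$ is contractible: each cell of degree $d$ indexed by $p \in P$ is attached along a map $S^{d-1} \times \Delta^k \to (\text{lower skeleton})$, and because no cell is attached to a cell of equal or higher order, these attaching maps can be chosen inductively on the degree filtration, each choice living in a space of maps that deformation-retracts onto a point (the attaching map factoring through $X\times\Delta^k$ itself, which is the content of the retraction $r$ existing and respecting the data). The expansion-pair cells are rigidly prescribed (two canceling cells attached directly to $X$), so they contribute nothing. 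Thus the forgetful functor has "contractible fibers" in the appropriate fibered sense, and one concludes via a theorem of the type of Waldhausen's approximation theorem, or more directly by building a section up to natural transformation. I would set this up so that the comparison reduces to the statement in \cite{IgusaWaldhausen}, invoking it for the technical core rather than reproving it.

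Concretely, the cleanest route is to produce an explicit functorial section $s:\Q(X^{\Delta^\bullet}) \to \E(X^{\Delta^\bullet})$ that picks, for each graded poset $P$, the "minimal" or "standard" relative cell complex $Y_P = X\times\Delta^k \sqcup \bigsqcup_{p\in P} I^{\deg p}\times\Delta^k/\!\sim$ with all attaching maps collapsed into $X\times\Delta^k$ (i.e., every cell attached trivially), together with the obvious retraction; then show $s$ is exact, that the composite $\Q \to \E \to \Q$ is the identity, and that the composite $\E \to \Q \to \E$ is connected to the identity by a chain of natural $x$-equivalences realized by sliding the attaching maps of a general $Y$ down into $X\times\Delta^k$ along a homotopy — this sliding is possible precisely because the retraction $r$ trivializes the homotopy class of every attaching map. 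By the additivity/naturality of the $S_\bullet$-construction, a natural transformation through $x$-equivalences induces a homotopy on $Kx\E$, giving $Kx\E \simeq Kw\Q$.

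The main obstacle I anticipate is the equivariance and simplicial coherence of this straightening homotopy: the attaching maps come in $k$-parameter families over $\Delta^k$ and must be slid down compatibly with all face and degeneracy maps (so that the homotopy is genuinely a natural transformation of simplicial Waldhausen categories, not just levelwise), and simultaneously compatibly with the poset ordering and with the rigid structure imposed on expansion pairs. Making the contraction of the space of attaching data genuinely functorial in $P$ and simplicial in the $\Delta^\bullet$-direction — rather than merely pointwise contractible — is exactly the subtle bookkeeping that \cite{IgusaWaldhausen} handles, and I would lean on that reference for this step rather than reconstruct it in full.
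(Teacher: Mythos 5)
Your reduction to the left-hand equivalence and your verification that the forgetful functor is exact are fine, but the core of your argument --- that the composite $\E(X^{\Delta^\bullet})\to\Q(X^{\Delta^\bullet})\to\E(X^{\Delta^\bullet})$ given by re-attaching all cells trivially is connected to the identity by a zigzag of natural $x$-equivalences, because the retraction $r$ lets you slide every attaching map down into $X\times\Delta^k$ --- does not work. An object of $\E(X^{\Delta^k})$ may have cells attached to cells of strictly lower order; the attaching map of such a cell lands in the lower skeleton, not in $X\times\Delta^k$, the existence of the retraction $r$ does not make it nullhomotopic there, and its homotopy class is genuine data. Replacing the complex by the trivially attached one changes the fiberwise homotopy type rel $X$ (a $2$-cell attached along a loop in a $1$-cell is not equivalent to a trivially attached $2$-cell), so no morphism of $\E$ --- let alone an $x$-equivalence, which must cover a bijection away from collapsed expansion pairs --- connects the two. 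For the same reason the ``space of choices of $Y$ over a fixed $P$'' is not contractible. Your sliding homotopy is valid only on the subcategory where the poset ordering is trivial and every cell is attached directly to $X\times\Delta^k$; that is exactly the lemma the paper proves for $\D^1_n(X^{\Delta^\bullet})\simeq \Q_0(X^{\Delta^\bullet})$, with the homotopy $\eta_\alpha(s,t)=r(\alpha_*(t)s,t)$ you describe.

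The missing idea is that the passage from arbitrarily attached complexes to trivially attached ones happens in $K$-theory, not at the level of categories: the paper filters $\D(X^{\Delta^k})$ by the number of attachment layers and uses Waldhausen's additivity theorem (splitting off the bottom layer as a cofibration sequence $A\into B\onto B/A$ with $A\in\D^1$ and $B/A\in\D^n$, organized by an auxiliary category of height functions) to prove $i\S_\bullet\D^1\simeq i\S_\bullet\D\simeq w\S_\bullet\D$ by induction on $n$. A second, separate fibration-sequence argument is then needed for the expansion pairs, which you dismiss as ``contributing nothing'': since an expansion pair couples cells of adjacent degrees and the $x$-equivalences collapse them, one must compare $x\S_\bullet\E_i^i$ with $x\S_\bullet\E_i^j$ through the subcategory $\B$ of pure expansion complexes and auxiliary classes of $u$- and $v$-equivalences. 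Finally, deferring the technical core to \cite{IgusaWaldhausen} is not really available here: that manuscript is unpublished and inaccessible, and reproducing its argument is precisely the purpose of the appendix in which this proposition is proved.
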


\begin{proof} This was originally proved in \cite{IgusaWaldhausen}. We reproduce the proof in the appendix \ref{homotopyofQ}.
\end{proof}

Furthermore there is a map 
	$$h\E(X^{\Delta^\bullet})\to h \R^{hf}(X^{\Delta^\bullet})$$
given by forgetting the graded posets. 

\begin{Proposition}\label{Prop4.3} This gives a weak equivalence
	$$Kh\E(X^{\Delta^\bullet})\simeq A(X^{\Delta^\bullet})\simeq A(X).$$
\end{Proposition}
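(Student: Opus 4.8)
The plan is to exhibit the forgetful functor $\E(X^{\Delta^\bullet})\to \R^{hf}(X^{\Delta^\bullet})$ as an exact functor of Waldhausen categories and show it induces an equivalence on $K$-theory with respect to the $h$-equivalences. First I would check that forgetting the graded poset $P$ and remembering only the retractive space $(Y,r)$ is a well-defined exact functor: cofibrations in $\E$ (order-preserving monos of posets covered by cellular inclusions) land among cofibrations of retractive spaces, $h$-equivalences go to $h$-equivalences by definition, and the pushout/cube axioms are inherited because the cell structure is carried along rigidly. The target of an object $(P,Y,r)$ is a relative cell complex with finitely many cells over $X$, hence homotopy finite, so the functor indeed lands in $\R^{hf}(X^{\Delta^\bullet})$, giving a map $Kh\E(X^{\Delta^\bullet})\to A(X^{\Delta^\bullet})$.

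Next I would prove this map is a weak equivalence. The essential point is that after passing to $h$-equivalences the extra data of the graded poset $P$ becomes homotopically irrelevant: up to $h$-equivalence, an object of $\E$ is just a finite relative CW complex over $X$ together with a choice of cell indexing, and the space of such indexings is contractible in the relevant sense. More precisely, I would argue that the functor is homotopy cofinal — every object of $\R^{hf}(X^{\Delta^n})$ is $h$-equivalent to one in the image (a homotopy finite retractive space can be replaced, up to homotopy equivalence, by a finite relative CW complex, and any finite relative CW structure determines a graded poset $P$ with a map to $X^{\Delta^n}$), and the comma categories are contractible because the choices of compatible cell structures and poset labelings form a filtered (indeed contractible) system. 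Combined with the approximation theorem of Waldhausen \cite{zbMATH03927168} — one verifies the two approximation axioms App 1 and App 2 for the forgetful functor with respect to $h$-equivalences, using that CW approximation of maps is functorial up to homotopy — this yields $Kh\E(X^{\Delta^\bullet})\simeq A(X^{\Delta^\bullet})$, and then $A(X^{\Delta^\bullet})\simeq A(X)$ by the simplicial enrichment remark already recorded in the excerpt.

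The main obstacle I anticipate is verifying the approximation property cleanly, specifically axiom App 2: given an object $(P,Y,r)$ of $\E$ and a map $Y\to Z$ to a homotopy finite retractive space, one must factor it as a cofibration in $\E$ followed by an $h$-equivalence, i.e. one must enlarge the cell complex $Y$ (adding new cells, hence new poset elements over $X^{\Delta^k}$, in non-decreasing-order-respecting position) so that the enlargement maps by a homotopy equivalence to $Z$. This is a relative-cell-attachment / mapping-cylinder argument, but the bookkeeping with the partial ordering and the identified expansion pairs — ensuring that no cell is attached to a cell of equal or higher order and that the expansion-pair structure survives — is delicate; this is precisely the content of the Igusa–Waldhausen manuscript \cite{IgusaWaldhausen}, and I would follow their construction, deferring the detailed verification (as with Proposition \ref{Prop4.2}) to the appendix. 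A secondary point to be careful about is that $h\E$ and $h\R^{hf}$ must be compared as \emph{simplicial} categories level-wise and the equivalences assembled, but this is routine once the levelwise statement is established.
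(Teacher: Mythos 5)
Your choice of tool (Waldhausen's approximation theorem with respect to the $h$-equivalences) is correct, but the route you propose --- a single direct application to the forgetful functor $h\E(X^{\Delta^\bullet})\to h\mathcal{R}^{hf}(X^{\Delta^\bullet})$ --- is not viable as stated, and the step you defer is exactly where it breaks. Morphisms in $\E(X^{\Delta^k})$ are rigid: a morphism $(\alpha,f)$ is completely determined by the poset map $\alpha$ when $f$ exists, so $h\E$ has far fewer morphisms than $h\mathcal{R}^{hf}$. For App~2 you must take an arbitrary continuous map $x\colon Y\to Z$ of retractive spaces, with $(P,Y,r)\in\E$ and $Z$ merely homotopy finite, and factor it \emph{on the nose} as the image of a cofibration of $\E$ followed by an $h$-equivalence. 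The mapping cylinder of $x$ is not an object of $\E$ (it carries no admissible ordered cell structure indexed by a finite graded poset with product cells $I^{\deg p}\times\Delta^k$), and replacing $x$ by a cellular approximation into a CW replacement of $Z$ only recovers $x$ up to homotopy, which App~2 does not permit. The appeal to ``homotopy cofinality and contractible comma categories'' does not repair this, and the Igusa--Waldhausen argument you defer to does not in fact verify the approximation property for this functor.

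What the paper does instead is interpose an auxiliary Waldhausen category $\M(X^{\Delta^k})$: the same objects as $\E$ (without expansion pairs) but with genuinely more morphisms, namely pairs $(f,\alpha)$ with $f$ merely continuous and $\alpha$ a $\vee$-preserving map of lattices of closed subsets. The comparison then splits into three approximation-theorem arguments, each tractable precisely because of this flexibilization: (i) $h\E(X^{\Delta^k})\to h\M(X^{\Delta^k})$, where App~2 is verified by restricting the mapping cylinder $T(f)$ to the closed subset $A\subset T(\alpha)$ obtained by deleting expansion pairs, so that the front inclusion lifts back to $\E$; (ii) the degeneracy $h\M(X^{\Delta^0})\to h\M(X^{\Delta^k})$, where one uses that $Z^B\to\Delta^k$ is a Serre fibration to trivialize families --- this is the step you dismiss as ``routine,'' but it is where the requirement that cells be products over $\Delta^k$ is reconciled with arbitrary $\Delta^k$-families of retractive spaces; and (iii) $h\M(X^{\Delta^0})\to h\mathcal{R}_f(X)$, via an intermediate CW subcategory, where the flexible morphisms of $\M$ make the CW- and cellular-approximation factorizations legitimate morphisms. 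A direct argument would have to build all three of these mechanisms into one verification of App~2, at which point you will have reconstructed $\M$.
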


\begin{proof} Again, this was proved in \cite{IgusaWaldhausen} and can be found in the appendix \ref{homotopyofA}.
\end{proof}

One can see that every $x$-equivalence of $\E(X^{\Delta^\bullet})$ is also an $h$-equivalence, and hence we get a map 
	$$Kx\E(X^{\Delta^\bullet})\to Kh\E(X^{\Delta^\bullet}).$$
We will use this map to compare the Becker-Gottlieb transfer and the Igusa-Klein transfer as maps into $A(X).$

\begin{Remark} The fiber of the above map can be identified as
	$$Kx\E(X^{\Delta^\bullet})^h\simeq Wh^{PL}(X).$$
Doing so was the original purpose of the Igusa-Waldhausen paper \cite{IgusaWaldhausen}. Since Waldhausen found an alternate proof in \cite{zbMATH03927168}, this paper was ultimately never published.
\end{Remark}

\subsection{A lift for the Becker-Gottlieb transfer}

In this section we work to compare $p^!_{IK}$ and $p^!$ from the previous sections. Much will be guided by the following homotopy commutative diagram (the disconnected part on the right indicates the homotopy type of every model in the corresponding rows, all horizontal maps are weak equivalences). As always consider a smooth bundle $E\to B$ with local system $\F\to E.$
\footnotesize
$$\xymatrix{
			&	Kw\K(\simE,\xi_\F)	&	Kx\E(\simE)\ar[l]_-{MMC_\xi}\ar[d]	&	\Omega\hocolim_n|\S_\bullet\P_\bullet(E\times I^n)|\ar[d]	& Q(E_+)\ar[d]^{\textnormal{Assembly}}\\
			&					&	Kh\E(\simE)\ar[r]				&	Kh\R^{hf}(\simE)								& A(E)\\
	|\S_\bullet|(B)|\ar[uur]^{p^!_{IK}}\ar[uurr]^{p^!_M}\ar[urr]^{p^!_{A,M}}\ar[urrr]_{p^!_A}\ar@/^8pc/[uurrr]^{p^!}
	}
$$

\normalsize

We already defined all the spaces involved as well as the transfers $p^!_{IK}$ and $p^!_A.$ The strategy to introduce and use the rest is as follows: First we define the Morse transfer $p^!_M:|\S_\bullet|\to Kx\E(\simE)$ and the map $MMC_\xi$ The transfer $p^!_{A,M}$ is simply the composition of $p^!_M$ with the inclusion $Kx\E(\simE)\to Kh\E(\simE).$ Recall that we have an explicit description of $p^!_A$ from Section \ref{smoothtorsion} which we now can compare explicitly to the composition of $p^!_A$ with the forgetful inclusion $Kh\E(\simE)\to \R^{hf}(\simE)$ (which is a weak equivalence as we will show in Appendix \ref{homotopyofA}). Lastly, $p^!$ and $p^!_{IK}$ are both lifts of $p^!_A$ and $p^!_{A,M},$ so in the end we leverage the section of the assembly map to lift our comparison. 

\begin{Definition} The Morse transfer
	$$p^!_{M}:|S_\bullet(B)|\to Kx\E(E^{\Delta^\bullet})$$
is given by sending a simplex $\sigma:\Delta^k\to B$ to the pair $(P,Y),$ where $P$ is the graded poset of the critical and twice the birthdeath points over $\sigma$ (as before this can be viewed as a set over $E^{\Delta^k}$) and 
	$$Y=E\times \Delta^k\sqcup Y'.$$
Here  $Y'\simeq\sigma^*E$ is viewed as a parametrized cell complex via the generalized Morse function. More precisely
	$$Y'=\bigsqcup_{p\in P}I^{\deg p}\times \Delta^k/\sim.$$
The equivalence relation does not only identify attachments of the boundary of cells, it also identifies whenever two critical points join together at a birth-death point. Altogether $Y$ gives a parametrized retractive space of $E\times \Delta^k$ where the retraction is given by inclusion of the Morse skeleton in the first component
	$$Y=Y\times \Delta^k\sqcup Y'\to Y\times \Delta^k.$$
\end{Definition}

There is a functor
	$$MMC_\xi:x\E(E^{\Delta^\bullet})\to w\K(E^{\Delta^\bullet},\xi)$$
Constructed in the following way: An object of $x\E(E^{\Delta^k})$ is a pair $(P,Y)$ where $P$ is a graded poset with an inclusion $P\times \Delta^k\to E$ and 
	$$Y=E\times \Delta^k\sqcup\bigsqcup_{p\in P}I^{\deg p}\times \Delta^k/\sim$$
can be viewed as a parametrized $\Delta^k$ family of relative cell complexes over $E$ with cells indexed and attached according to the poset order of $P.$ In particular every vertex $[l]\in \Delta^k$ gives a cell complex $Y(l)$ and every edge $[l,l']\subset \Delta^k$ gives a simple homotopy equivalence $Y(l)\to Y(l').$ Higher faces of $\Delta^k$ will give homotopies and higher homotopies between these simple homotopy equivalences. So we can form $MMC_\xi(P,Y)$ by setting 
	$$MMC_\xi(P,Y):=MMC((P,C)),$$
where $C$ is the $k$-tuple of  chain complexes given by $C(l)=C_*(Y(l),E;\xi)$ (where $1\leq l\leq k$) together with homotopy equivalences and higher homotopies between $Y(l)$ and $Y(l')$ given by the simple homotopy equivalences and higher homotopies from above. The functor $MMC$ is the multiple mapping cylinder. Notice that this can be done functorially.

We get a functor in $K$-theory and it follows directly that
	$$p^!_{IK}=MMC_\xi \circ p^!_{M}:|S_\bullet(B)|\to Kw\K(E^{\Delta^\bullet},\xi).$$

\begin{Proposition} The map	
	$$MMC_\xi:Kx\E(E^{\Delta^\bullet})\to Kw\K(E^{\Delta^\bullet},\xi)$$
is a weak equivalence.
\end{Proposition}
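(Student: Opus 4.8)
The plan is to realize $MMC_\xi$ as one edge of a commuting triangle of exact functors whose other two edges are already known to induce weak equivalences on $K$-theory, and then to conclude by two-out-of-three. Write $U_\E\colon x\E(E^{\Delta^\bullet})\to w\Q(E^{\Delta^\bullet})$ for the cell-forgetting functor of Proposition \ref{Prop4.2}, and $U_\K\colon w\K(E^{\Delta^\bullet},\xi)\to w\Q(E^{\Delta^\bullet})$ for the functor that discards the $\Lambda P$-filtered complex and keeps only the graded poset with its identified expansion pairs. Both induce weak equivalences on $K$-theory: $KU_\E$ by Proposition \ref{Prop4.2}, and $KU_\K$ by the proposition asserting that $Kw\K(X_\bullet,\xi)\to Kw\Q(X_\bullet)$ is a weak equivalence (applied with $X_\bullet=E^{\Delta^\bullet}$, $\xi=\xi_\F$). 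So it suffices to verify that the triangle
$$\xymatrix@C=3em{ x\E(E^{\Delta^\bullet})\ar[rr]^-{MMC_\xi}\ar[dr]_-{U_\E} & & w\K(E^{\Delta^\bullet},\xi)\ar[dl]^-{U_\K}\\ & w\Q(E^{\Delta^\bullet}) & }$$
commutes; granting this, $KU_\K\circ K(MMC_\xi)=KU_\E$ with both outer maps weak equivalences, so $K(MMC_\xi)$ is a weak equivalence by two-out-of-three. Equivalently, all three $K$-theory spaces get identified with $Kw\Q(E^{\Delta^\bullet})\simeq Q(E_+)$ (the Corollary above).

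To see the commutativity, recall that the multiple mapping cylinder turns $(C_*,P)\in Wh_q(\C)$ into a $\Lambda P$-filtered complex built on the \emph{same} graded poset $P$ (Definition 4.1.3 of \cite{Igusa2}). Hence for an object $(P,Y,r)$ of $\E(E^{\Delta^k})$ — whose cells of $Y$ are indexed by $P$ — the object $MMC_\xi(P,Y,r)=MMC((P,C))$ has underlying poset $P$, so $U_\K(MMC_\xi(P,Y,r))=P=U_\E(P,Y,r)$, and likewise on morphisms and faces. The one point needing attention is that an expansion pair of $\E$ — two cells of degrees $d,d+1$ in canceling position directly attached to $E\times\Delta^k$ — is carried to precisely an expansion pair of $\K$, i.e. a split summand $\C\xrightarrow{\cong}\C$ sitting in degrees $d,d+1$: this holds because such a relative $2$-cell subcomplex has relative $\xi_\F$-twisted chain complex $\C\to\C$ with the identity, up to the unit recorded by the $BU(1)$-label $\gamma$ that $\xi_\F$ supplies. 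Along the way one also checks, as the paper already asserts, that $MMC_\xi$ is exact: it preserves the zero object ($MMC$ of the empty poset with $Y=E\times\Delta^k$ is the zero complex), sends subcomplex inclusions to filtration-compatible split chain inclusions, hence cofibrations in $\K$, and sends $x$-equivalences to $w$-equivalences — the latter being immediate since on underlying posets $MMC_\xi$ is the identity and both the $x$- and $w$-structures are detected in $\Q(E^{\Delta^\bullet})$.

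The main obstacle is exactly this bookkeeping: one must unwind Igusa's definitions of $Wh_\bullet(\C)$, the multiple mapping cylinder, and $\K(X_\bullet,\xi)$ in Sections~4--5 of \cite{Igusa2} and match them term by term against the definition of $\E(X^{\Delta^\bullet})$, paying particular attention to the $BU(1)$-labels coming from the twisting by $\xi_\F$ and to the identified-expansion-pair data. None of it is deep, but it is the only place where real work occurs. Should strict commutativity fail for merely bureaucratic reasons, it suffices to produce a natural $w$-equivalence $U_\K\circ MMC_\xi\Rightarrow U_\E$ — evident from the canonical identification of underlying posets — and conclude by two-out-of-three just the same.
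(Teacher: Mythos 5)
Your argument is exactly the paper's: both factor the comparison through the forgetful functors to $w\Q(E^{\Delta^\bullet})$, observe that the triangle commutes because $MMC_\xi$ is the identity on underlying posets, and conclude by two-out-of-three using Proposition \ref{Prop4.2} and the equivalence $Kw\K(X_\bullet,\xi)\simeq Kw\Q(X_\bullet)$. You simply spell out the bookkeeping (expansion pairs, exactness) that the paper compresses into ``agree by inspection.''
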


\begin{proof} Both the forgetful map
	$$x\E(E^{\Delta^\bullet})\to w\Q(E^{\Delta^\bullet})$$
and the composition with the forgetful map
	$$Kx\E(E^{\Delta^\bullet})\stackrel{MMC_\xi}\longrightarrow Kw\K(E^{\Delta^\bullet},\xi)\to w\Q(E^{\Delta^\bullet})$$
agree by inspection (they only care about the first component). Furthermore, both forgetful maps are homotopy equivalences.
\end{proof}

\begin{Corollary} The maps $p^!_{IK}$ and $p^!_{M}$ have the same homotopy type viewed as maps 
	$$B\to Q(E_+).$$
\end{Corollary}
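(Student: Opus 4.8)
The plan is to read off the Corollary from the preceding Proposition together with the strict factorization $p^!_{IK}=MMC_\xi\circ p^!_{M}$ recorded just above it. The only genuine content is a bookkeeping check: that the identifications of both $Kx\E(E^{\Delta^\bullet})$ and $Kw\K(E^{\Delta^\bullet},\xi_\F)$ with $Q(E_+)$ which one uses in order to regard $p^!_{IK}$ and $p^!_{M}$ as maps $B\to Q(E_+)$ are compatible with the weak equivalence $MMC_\xi$.

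First I would record the commutative triangle
$$\xymatrix{
& Kx\E(E^{\Delta^\bullet})\ar[d]^{MMC_\xi}\\
|S_\bullet(B)|\ar[ur]^{p^!_{M}}\ar[r]_-{p^!_{IK}} & Kw\K(E^{\Delta^\bullet},\xi_\F)
}$$
which holds on the nose since $p^!_{IK}=MMC_\xi\circ p^!_{M}$. As $MMC_\xi$ is a weak equivalence by the previous Proposition, $p^!_{IK}$ and $p^!_{M}$ already agree after transport along $MMC_\xi$; what is left is to see that this transport is the canonical comparison to $Q(E_+)$.

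Second I would fix the two standard identifications with $Q(E_+)$: on the source, Proposition \ref{Prop4.2} identifies $Kx\E(E^{\Delta^\bullet})$ with $Kw\Q(E^{\Delta^\bullet})\simeq Q(E_+)$ via the cell-forgetting map; on the target, the Proposition of \cite{Igusa2} asserting that the forgetful functor $\K\to\Q$ induces a weak equivalence $Kw\K(E^{\Delta^\bullet},\xi_\F)\to Kw\Q(E^{\Delta^\bullet})$ identifies the latter with $Q(E_+)$. Then, exactly as in the proof of the $MMC_\xi$ Proposition, I would observe that the composite of $MMC_\xi$ with $Kw\K(E^{\Delta^\bullet},\xi_\F)\to Kw\Q(E^{\Delta^\bullet})$ is the cell-forgetting map $Kx\E(E^{\Delta^\bullet})\to Kw\Q(E^{\Delta^\bullet})$, because both only remember the underlying graded poset over $E^{\Delta^\bullet}$. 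Hence $MMC_\xi$ intertwines the two identifications with $Q(E_+)$ up to homotopy.

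Combining the two steps, $p^!_{IK}$ and $p^!_{M}$ become the same map up to homotopy after transport to $Q(E_+)$, and since $|S_\bullet(B)|\simeq B$ this is the Corollary. The main obstacle is the compatibility check in the second step: one must make sure that the two a priori different routes to $Q(E_+)$ --- through $\E$ and through $\K$ --- really do factor through one and the same $Kw\Q(E^{\Delta^\bullet})$. This is precisely where the ``both forgetful functors agree by inspection'' observation from the $MMC_\xi$ Proposition does the work, so the step is short, but it is the only place where anything has to be verified rather than merely cited.
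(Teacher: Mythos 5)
Your proposal is correct and follows the same route as the paper: the Corollary is read off from the strict factorization $p^!_{IK}=MMC_\xi\circ p^!_{M}$ together with the preceding Proposition that $MMC_\xi$ is a weak equivalence. Your extra compatibility check --- that $MMC_\xi$ intertwines the two identifications with $Q(E_+)$ because both composites with the forgetful functors to $Kw\Q(E^{\Delta^\bullet})$ agree --- is exactly the content of the paper's proof of that Proposition, so you have merely made explicit what the paper leaves implicit.
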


Consequently, it is enough to show that $p^!_{M}$ has the homotopy type of the Becker-Gottlieb transfer.

\subsection{Determining the homotopy type}

Our goal now is to show that $p^!$ and $p^!_{M}$ have the same homotopy type. Recall that $p^!$ was given as an explicit lift to 	
	$$p^!_A:|S_\bullet(B)|\to A(E^{\Delta^\bullet})$$
defined by sending $\sigma:\Delta^k\to B$ to the retractive space $E\times \Delta^k\sqcup \sigma^*E\to E\times \Delta^k.$ 

On the other hand $p^!_{M}$ was explicitly constructed as a geometric realization
	$$p^!_{M}:|S_\bullet(B)|\to Kx\E(E^{\Delta^\bullet}).$$
Furthermore there is the inclusion
	$$Kx\E(E^{\Delta^\bullet})\to Kh\E(E^{\Delta^\bullet})\simeq A(E^{\Delta^\bullet}).$$
We denote the composition of this with $p^!_M$ by $p^!_{A,M}.$ Instead of comparing maps on the level of $Q(E_+)$ we will compare $p^!_A$ and $p^!_{A,M}$ on the level of $A(E).$ First we need

\begin{Lemma} The  map
	$$Kx\E(E^{\Delta^\bullet})\to Kh\E(E^{\Delta^\bullet})$$
has the homotopy type of the assembly map
	$$a:Q(E_+)\to A(E)$$
\end{Lemma}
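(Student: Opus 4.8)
The plan is to identify both maps with the assembly map by factoring each of them through a common intermediate construction and then invoking the naturality of the assembly and the already-established weak equivalences of Propositions \ref{Prop4.2} and \ref{Prop4.3}. Recall that the assembly map $a:Q(E_+)\to A(E)$ is characterized (up to homotopy) by the property that it takes a partition of $E\times I$ to the retractive space obtained by regarding the non-trivial part of the partition as a relative cell complex over $E$. The category $\E(E^{\Delta^\bullet})$ was designed precisely so that this characterization becomes tautological: an object $(P,Y,r)$ already \emph{is} a parametrized relative cell complex over $E\times\Delta^k$ together with a compatible graded poset $P$ bookkeeping its cells, and the forgetful functor $\E(E^{\Delta^\bullet})\to\R^{hf}(E^{\Delta^\bullet})$ simply discards $P$ while the functor $\E(E^{\Delta^\bullet})\to\Q(E^{\Delta^\bullet})$ discards $Y$. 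Under the identifications $Kx\E(E^{\Delta^\bullet})\simeq Q(E_+)$ and $Kh\E(E^{\Delta^\bullet})\simeq A(E)$, the map in question is induced by the identity on objects (we merely change which structure we remember), so it manifestly sends ``partition data'' to ``the underlying retractive space,'' which is the defining property of $a$.

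To make this precise I would proceed in three steps. First, recall from Proposition \ref{Prop4.2} that the forgetful functor $x\E(E^{\Delta^\bullet})\to w\Q(E^{\Delta^\bullet})$ induces a weak equivalence $Kx\E(E^{\Delta^\bullet})\simeq Kw\Q(E^{\Delta^\bullet})\simeq Q(E_+)$; this is the model for $Q(E_+)$ we work with. Second, recall from Proposition \ref{Prop4.3} that the forgetful functor $h\E(E^{\Delta^\bullet})\to h\R^{hf}(E^{\Delta^\bullet})$ induces $Kh\E(E^{\Delta^\bullet})\simeq A(E^{\Delta^\bullet})\simeq A(E)$. Third, observe that the square
$$\xymatrix{
Kx\E(E^{\Delta^\bullet})\ar[r]\ar[d] & Q(E_+)\ar[d]^{a}\\
Kh\E(E^{\Delta^\bullet})\ar[r] & A(E)
}$$
commutes up to preferred homotopy: the two ways around send an object $(P,Y,r)$ to the same retractive space over $E\times\Delta^k$ (up to the ``thickening up'' replacement that defines $a$ on Waldhausen's model for $A$), since both composites forget $P$ and retain $Y$ together with its retraction. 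Since the horizontal maps are weak equivalences by the first two steps, the left vertical map has the homotopy type of $a$, which is the assertion.

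The main obstacle is the third step: reconciling the honest cell-complex model for $A(E)$ provided by $\E(E^{\Delta^\bullet})$ with Waldhausen's official model $\R^{hf}(E^{\Delta^\bullet})$ of homotopy finite retractive spaces, and checking that under this reconciliation the comparison functor agrees \emph{on the nose} with the ``thickening up'' procedure by which $a$ was originally defined in \cite{zbMATH05551153}. Concretely, the assembly map there is built by viewing a partition as a retractive space and then replacing it by a homotopy equivalent CW-object; here we are handed the CW-object directly, so the two constructions differ only by a canonical simple homotopy equivalence, and one must verify this identification is natural and coherent over $\Delta^\bullet$. This is essentially the content of Proposition \ref{Prop4.3} and its proof in Appendix \ref{homotopyofA}, so the work is to quote that appendix correctly rather than to reprove it; once the two models for $A(E)$ are matched, the commutation of the square above is a direct comparison of functors, both of which remember exactly the pair $(Y,r)$.
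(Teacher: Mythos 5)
Your overall strategy --- exhibit a homotopy-commutative square with the map in question on the left, the assembly $a$ on the right, and the equivalences of Propositions \ref{Prop4.2} and \ref{Prop4.3} as the horizontal maps --- is a legitimate reformulation of the lemma, but the justification you give for the commutativity of that square does not work, and that commutativity is exactly where the content of the lemma lives. The top horizontal equivalence of Proposition \ref{Prop4.2} is the forgetful functor $x\E(E^{\Delta^\bullet})\to w\Q(E^{\Delta^\bullet})$, which discards $Y$ and remembers only the graded poset $P$; the identification of $Kw\Q(E^{\Delta^\bullet})$ with $Q(E_+)$ then proceeds via finite sets over $E$. Consequently the composite through $Q(E_+)$ sends $(P,Y,r)$ first to $P$ and then to the retractive space \emph{freely} generated by $P$ (cells attached trivially at their basepoints), not to $Y$. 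Your claim that ``both composites forget $P$ and retain $Y$'' is therefore false for that composite: it has forgotten $Y$ entirely before $a$ is ever applied. The two retractive spaces differ precisely by the attaching maps of the cells, and identifying them in $A(E^{\Delta^\bullet})$ requires a genuine argument --- e.g.\ filtering $Y$ by the poset order on $P$ and invoking additivity to show that $Y$ and the trivially attached complex on $P$ represent the same point of $A(E)$, naturally and coherently in $\Delta^\bullet$ (the reduction of $\D$ to $\D^1$ in Appendix \ref{homotopyofQ} is the kind of input needed here). Your third step instead addresses a different and already-settled issue, namely comparing the cell-complex model with $\R^{hf}(E^{\Delta^\bullet})$, so the key step remains asserted rather than proved.

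For comparison, the paper sidesteps this bookkeeping entirely: it views both $Kx\E(E^{\Delta^\bullet})\to Kh\E(E^{\Delta^\bullet})$ and $a$ as maps of ring spectra, notes that for $E=*$ the sphere spectrum $Q(S^0)$ is initial so there is only one such map $Q(S^0)\to A(S^0)$ up to homotopy, and then deduces the general case by universality in $E$. If you prefer your more explicit route, you must supply the additivity-type identification of $Y$ with the freely attached complex on $P$; as written, the proposal has a gap at precisely the point that carries the mathematical content.
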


\begin{proof} All of these maps can be viewed as maps of ring spectra. If $E=*$ there is only the initial map $Q(S^0)\to A(S^0)$ since the sphere spectrum $Q(S^0)$ is the initial object in that category. The lemma now follow from universality arguments for any $E.$
\end{proof}

\begin{Theorem} The transfers $p^!$ and $p^!_{M}$ have the same homotopy type as maps
	$$B\to Q(E_+)$$
\end{Theorem}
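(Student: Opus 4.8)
The plan is to compare the two transfers after pushing them forward along the assembly map, where both will be recognizably the \emph{same} explicit $A$-theoretic transfer, and then to pull the comparison back to $Q(E_+)$ along a section of the assembly. By the Lemma just proved, the forgetful map $Kx\E(\simE)\to Kh\E(\simE)$ has the homotopy type of the assembly $a\colon Q(E_+)\to A(E)$; under the identifications $Kx\E(\simE)\simeq Q(E_+)$ (Proposition \ref{Prop4.2}) and $Kh\E(\simE)\simeq A(\simE)\simeq A(E)$ (Proposition \ref{Prop4.3}) the composite of $p^!_M$ with $Kx\E(\simE)\to Kh\E(\simE)$ is by definition $p^!_{A,M}$, so $p^!_M$ becomes a lift of $p^!_{A,M}$ along $a$, while $p^!$ is by construction a lift of $p^!_A$ along $a$. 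Letting $\tr\colon A(E)\to Q(E_+)$ be Waldhausen's trace \cite{zbMATH03958257}, which retracts the assembly, $\tr\circ a\simeq\id_{Q(E_+)}$, we obtain exactly as in the proof of Proposition \ref{InverseTrick} that $p^!\simeq\tr\circ a\circ p^!\simeq\tr\circ p^!_A$ and $p^!_M\simeq\tr\circ a\circ p^!_M\simeq\tr\circ p^!_{A,M}$. Hence it is enough to produce a homotopy $p^!_A\simeq p^!_{A,M}$ as maps $|S_\bullet(B)|\to A(\simE)$.

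To do so I would work with the underlying simplicial functors into $\R^{hf}(\simE)$. The functor giving $p^!_A$ sends $\sigma\colon\Delta^k\to B$ to the retractive space $E\times\Delta^k\sqcup\sigma^*E\to E\times\Delta^k$, whereas the functor giving $p^!_{A,M}$ (namely $p^!_M$ followed by the functor $\E(\simE)\to\R^{hf}(\simE)$ that forgets the graded poset) sends $\sigma$ to $E\times\Delta^k\sqcup Y'_\sigma\to E\times\Delta^k$, where $Y'_\sigma=\bigsqcup_{p}I^{\deg p}\times\Delta^k/\!\sim$ is the parametrized Morse skeleton attached by the chosen fibrewise generalized Morse function. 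The point is the classical ``collapse handles onto their cores'' homotopy equivalence: gradient flow of the generalized Morse function collapses $\sigma^*E$ fibrewise onto $Y'_\sigma$, giving an $h$-equivalence of retractive spaces over $E\times\Delta^k$, and the cells introduced along the birth--death strata occur in canceling pairs and do not change the homotopy type. Realising this compatibly with the face and degeneracy operators --- or, more safely, realising it as a natural zig-zag of $h$-equivalences through the parametrized mapping cylinder of the collapse map --- produces the required homotopy $p^!_A\simeq p^!_{A,M}$ on $A(\simE)$, since a natural transformation by $h$-equivalences of functors into $\R^{hf}(\simE)$ induces a homotopy between the maps they determine on $A$-theory.

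I expect the genuinely delicate point to be this simplicial naturality of the Morse-theoretic collapse: one must arrange the fibrewise handle-to-core homotopy equivalences coherently over all of $\Delta^\bullet$, including across the birth--death loci where handles are created or annihilated in pairs. This coherence is precisely the parametrized higher Morse theory underpinning the construction of $p^!_{IK}$ in \cite{Igusa2}, and is one reason the unifying framework of \cite{IgusaWaldhausen} is useful here. Once it is in hand the rest is formal: the natural $h$-equivalences descend to a homotopy on $A(\simE)$, and applying $\tr$ transports it to the desired homotopy $p^!\simeq p^!_M$ in $Q(E_+)$; combined with the Corollary identifying $p^!_M$ with $p^!_{IK}$, this also yields $p^!\simeq p^!_{IK}$, which is the main input to Theorem \ref{mainthm}.
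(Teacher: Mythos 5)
Your proposal follows essentially the same route as the paper: compare $p^!_A$ and $p^!_{A,M}$ on the level of $A(\simE)$ via the fibrewise homotopy equivalence between $\sigma^*E$ and the Morse skeleton $Y'_\sigma$, then transport the homotopy back to $Q(E_+)$ using Waldhausen's trace $\tr$ with $\tr\circ a\sim\id$. The only difference is that the paper sidesteps the coherence issue you flag by running the comparison in the opposite direction, using the canonical inclusion $Y'_\sigma\into\sigma^*E$ as a natural transformation of functors into $\R^{hf}(\simE)$, rather than a chosen gradient-flow collapse.
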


\begin{proof} First notice that there is a homotopy equivalence between $a\circ p^!=p^!_A$ and $a\circ p^!_{M}=p^!_{A,M}$ both seen as maps
	$$|S_\bullet(B)|\to Kh\R^{hf}(\simE)\simeq A(E^{\Delta^\bullet})$$
(for $p^!_{A,M}$ we need to compose with the inclusion $Kh\E(\simE)\to Kh\R^{hf}(\simE)$): On the simplex $\sigma:\Delta^k\to B$ the equivalence is given by the natural transformation coming from including the Morse-skeleton $Y'$ into $\sigma^*(E)$ giving an homotopy equivalence inclusion
	$$(a\circ p^!_{M})(\sigma)=E\times \Delta^k\sqcup Y'\into E\times \Delta^k\sqcup \sigma^* E=p^!_A(\sigma)$$
Furthermore, Waldhausen showed  \cite{zbMATH03958257} that there is a homotopy right inverse to $a$ given by the trace map $\tr:A(E)\to Q(E_+),$ so $\tr\circ a\sim \id$ (this map gives the splitting of the fibration sequence $Wh^{PL}(E)\to Q(E_+)\to A(E)$). So finally we have
	$$p^!\sim\tr\circ a\circ p^!\sim\tr\circ a \circ p^!_{M}\sim p^!_{M}.$$
\end{proof}

\begin{Corollary} So we established
	$$p^!\sim p^!_{IK}.$$
\end{Corollary}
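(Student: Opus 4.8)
The plan is to chain together the equivalences and homotopies established in the preceding subsections so that the final statement $p^! \sim p^!_{IK}$ becomes a short formal consequence. The two maps $p^!$ and $p^!_{IK}$ both land in $Q(E_+)$, but via different models: $p^!$ factors through the partition model $Q(E^{\Delta^\bullet}_+)$ with its assembly to $A(E)$, while $p^!_{IK}$ factors through $Kw\K(E^{\Delta^\bullet},\xi_\F)$ via the multiple mapping cylinder. The unifying object is the expansion category $\E(E^{\Delta^\bullet})$, for which Propositions \ref{Prop4.2} and \ref{Prop4.3} give $Kx\E(E^{\Delta^\bullet}) \simeq Q(E_+)$ and $Kh\E(E^{\Delta^\bullet}) \simeq A(E)$, with the natural map between them being the assembly.

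First I would invoke the already-proved Corollary stating that $MMC_\xi$ is a weak equivalence and that it identifies $p^!_{IK} = MMC_\xi \circ p^!_M$; this reduces the whole problem to showing $p^! \sim p^!_M$ as maps $B \to Q(E_+)$. Second, I would transport the comparison down to the level of $A(E)$: using the Lemma identifying $Kx\E(E^{\Delta^\bullet}) \to Kh\E(E^{\Delta^\bullet})$ with the assembly $a$, both $a \circ p^! = p^!_A$ and $a \circ p^!_M = p^!_{A,M}$ are maps $|S_\bullet(B)| \to Kh\R^{hf}(E^{\Delta^\bullet}) \simeq A(E)$. On a simplex $\sigma:\Delta^k \to B$ the map $p^!_A$ produces the retractive space $E\times\Delta^k \sqcup \sigma^*E$, while $p^!_{A,M}$ produces $E\times\Delta^k \sqcup Y'$ where $Y'$ is the Morse skeleton built from the critical and birth-death points of the chosen fiberwise generalized Morse function; the inclusion of the Morse skeleton $Y' \hookrightarrow \sigma^*E$ is a homotopy equivalence (the CW structure induced by a generalized Morse function on the fiber is homotopy equivalent to the fiber itself), and this inclusion is natural in $\sigma$, so it furnishes a natural transformation exhibiting $p^!_A \sim p^!_{A,M}$ in $A(E)$.

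Third, I would promote this equivalence back up to $Q(E_+)$ using the splitting of the assembly. By Waldhausen's trace map $\tr: A(E) \to Q(E_+)$ with $\tr \circ a \simeq \id_{Q(E_+)}$ (equivalently, the splitting of $Wh^{PL}(E) \to Q(E_+) \to A(E)$), we get the chain $p^! \sim \tr \circ a \circ p^! = \tr \circ p^!_A \sim \tr \circ p^!_{A,M} = \tr \circ a \circ p^!_M \sim p^!_M$. Combined with $p^!_M \sim p^!_{IK}$ from the first step, this yields $p^! \sim p^!_{IK}$.

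The main obstacle, and the step requiring the most care, is the naturality and well-definedness of the Morse-skeleton inclusion $Y' \hookrightarrow \sigma^*E$ across the bifurcation set: when two critical points collide at a birth-death point the combinatorics of $P$ (and the corresponding cells of $Y'$) genuinely change, so one must check that the identifications built into the equivalence relation defining $Y'$ — including the ghost-point bookkeeping from Section \ref{IKtorsion} — are compatible with the face and degeneracy maps of $|S_\bullet(B)|$ and with the cofibration/$x$-equivalence structure of $\E(E^{\Delta^\bullet})$. In other words, the real content is that $p^!_M$ is a bona fide simplicial functor into $x\E(E^{\Delta^\bullet})$ whose image under forgetting cells agrees with $p^!_A$ up to the stated homotopy equivalence; once that is in hand, the rest is the formal two-out-of-three argument with the trace map sketched above.
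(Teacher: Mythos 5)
Your proposal is correct and follows essentially the same route as the paper: reduce to $p^!\sim p^!_M$ via the $MMC_\xi$ equivalence, compare $p^!_A$ and $p^!_{A,M}$ in $A(E)$ through the natural Morse-skeleton inclusion $Y'\into\sigma^*E$, and lift back to $Q(E_+)$ using Waldhausen's trace map with $\tr\circ a\sim\id$. Your additional remark about checking compatibility of the ghost-point bookkeeping with the simplicial structure correctly identifies where the real content lies, and is consistent with how the paper treats $p^!_M$.
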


\subsection{Comparing the lifts}

So far we established that the two different transfer maps $p^!, p^!_{IK}:|S_\bullet(B)|\to Q(E_+)$ agree. But to prove Theorem \ref{mainthm}, we are interested in comparing their lifts $\tau_{sm}, \tau_{IK}:|S_\bullet(B)|\to Wh_\xi(E),$ which are uniquely determined by their underlying maps $p^!$ and $p^!_{IK}$ together with a homotopy $H_{sm}:\lambda\circ p^!\to \const_0$ and $H_{IK}:\lambda\circ p^!_{IK}\to \const_0$ where $\lambda:Q(E_+)\to K(\C)$ is the assembly map followed by linearization. 

To prove that $\tau_{sm}$ and $\tau_{IK}$ are homotopic, we need to provide a homotopy of homotopies $\H:H_{IK}\circ H\to H_{sm},$ where $H:p^!\to p^!_{IK}$ is the homotopy found above. So $\H$ is a homotopy of homotopies of maps $Q(E_+)\to K(\C).$ One can view it as the inside of the triangle-diagram
	$$\xymatrix{
		\lambda\circ p^!\ar[dr]^{H_{sm}}\ar[rr]^{\lambda\circ H}	&&	\lambda\circ p^!_{IK}\ar[dl]_{H_{IK}}\\
			& \const_0,
		}
	$$
where all corners are maps $Q(E_+)\to K(\C).$

To be precise we will be working with $p^!_{M}:|S_\bullet(B)|\to Kx\E(E)$ instead of $p^!_{IK}.$ We need to be careful because the lift of $p^!$ (and $p^!_{IK}$) is explicitly given. However, by theorem \ref{fibthm}, it corresponds to the contracting homotopy $\H_{M}$ of the composition
	$$|S_\bullet(B)|\to Kx\E(E)\to K(\C)$$
given by the natural transformation to the final functor $\const_0:S_\bullet(B)\to K(\C).$

The homotopy $H_{sm}$ come from theorem \ref{contraction} and if the homology of the fiber is acyclic it is just given by the same final map \cite{twistedsmooth}. So it is clear that the diagram above commutes on the nose. This completes the proof of Theorem \ref{mainthm}.

\begin{Remark} So far this only works if the fiber $F$ of $E\to B$ is acyclic with respect to the local system $\xi.$ If it is not, the definition of Igusa-Klein torsion asks that the fundamental group $\pi_1(B)$ act trivially on the homology $H_*(F;\xi)$ (the smooth torsion only asks for this action to be unipotent). In this case, to define the smooth torsion we subtract the constant functor $H_*(F;\xi)$ from the construction in the loop space structure of $K(\C)$ (compare Definition \ref{reducedtorsion}). For the Igusa-Klein torsion, one forms a certain mapping cone as done in \cite{Igusa2}. Both amount to the same outcome and both torsions are still going to be equivalent.
\end{Remark}

\section{Appendix: Two Proofs}

\renewcommand{\Z}{\mathcal{Z}}

We present the proofs for Propositions \ref{Prop4.2} and \ref{Prop4.3}. These already appeared in \cite{IgusaWaldhausen}, but remained unpublished and not publicly accessible. We merely reproduce the results.

\subsection{The Homotopy Type of $Kx\E(X^{\Delta^\bullet})$}
\label{homotopyofQ}

Let $X$ be a topological space. We aim to prove
	$$Kx\E(\X)\simeq \Q(X_\bullet).$$
	
We will need an auxiliary category $\D.$

\begin{Definition} The simplicial category $\D(\X)$ is the same as the category $\E(\X)$ without identified expansion pairs. Explicitly we define  $\D(\X[k])$ as follows:
	\begin{itemize} 	
		\item An object in degree $k$ of $\D(\X[k])$ is a triple $(P,Y,r),$ where
			\begin{itemize}
				\item $P$ is a graded poset over $X^{\Delta^k}$ (without identified expansion pairs).
				\item $Y$ is a $k$-parameter family of relative cell complexes with cells indexed by $P.$ In particular
					$$Y=X\times \Delta^k\sqcup \bigsqcup_{p\in P} I^{\deg{p}}\times \Delta^k/\sim,$$
				where no cell is attached to a cell of equal or higher order. 
				\item $r:Y\to X\times \Delta^k$ is a retraction respecting the data above.
			\end{itemize}
		\item A morphism is a pair $(\alpha, f):(P,Y,r)\to (P',Y',r')$ where 
			\begin{itemize}
				\item $\alpha:P\to P'$ is a pointed set map that is closed as a poset map
				\item $f:Y\to Y'$ is a morphism above it respecting all the data such that 
				\item if $A$ is closed subset of $P$ then $f(Y^A)\subset (Y')^{\alpha(P)}$ where $Y^A\subset Y$ is the set of all elements over $A.$
			\end{itemize}
		\item A morphism $(\alpha, f)$ is a cofibration if $\alpha$ is an order preserving monomorphism (making $f$ an embedding of a parametrized subcomplex)
		\item A morphism $(\alpha, f)$ is a weak equivalence if $\alpha$ is a bijection (and thus $f$ is a parametrized cellular homeomorphism).
	\end{itemize}
Altogether, this defines a simplicial Waldhausen category. Let $\D^1(\X)$ be the full subcategory with cofibrations of $\D(\X)$ of objects $(Y,P,r)$ where $P$ only has the trivial ordering. Intuitively, this means that in $\D^1$ all the cells are attached at once.

\end{Definition}

\begin{Remark} A morphism $(f,\alpha)$ in $\D(\X)$ is completely determined by $\alpha$ if it exists.
\end{Remark}

Again let $\Q_0(\X)$ be the simplicial categories of finite sets (neither graded nor ordered) over $\X.$ Then recall from Remark \ref{ordering}
	$$K\Q(\X)\simeq K\Q_0(\X)\simeq \Omega^\infty\Sigma^\infty X_+.$$
	
Notice that the weak equivalences of $\D^1(\X)$ are exactly the isomorphisms because everything has the trivial ordering. Furthermore every object in $\D^1(\X[k])$ splits uniquely as a sum of objects with each only having cells in one given dimension.
	
\begin{Lemma} Let $n\in \mathbb{N}$ and let $\D^1_n(\X)$ be the subcategory of $\D^1(\X)$ with only cells of degree $n.$ Then we have
	$$\D^1_n(\X)\simeq \Q_0(\X).$$
\end{Lemma}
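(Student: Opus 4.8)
The plan is to exhibit a pair of functors between $\D^1_n(\X)$ and $\Q_0(\X)$ which are mutually inverse up to natural weak equivalence, and which respect the Waldhausen structure, so that they induce inverse equivalences on $K$-theory. The obvious candidate for the forgetful direction is the functor $\D^1_n(\X) \to \Q_0(\X)$ which sends $(P,Y,r)$ to the underlying finite set $P$ (discarding the grading, which is constant equal to $n$, and the cell structure $Y$ and retraction $r$), and sends a morphism $(\alpha,f)$ to $\alpha$. Since a morphism of $\D^1_n(\X)$ is completely determined by $\alpha$ when it exists, this functor is faithful; the content is to produce a section. For that I would build a functor $\Q_0(\X) \to \D^1_n(\X)$ sending a finite set $S$ over $X^{\Delta^k}$ to the ``wedge of $n$-spheres'' (or rather $n$-cells) indexed by $S$: concretely $Y_S = X\times \Delta^k \sqcup \bigsqcup_{s\in S} I^n\times\Delta^k/\!\sim$, where each cell is attached along $\partial I^n\times\Delta^k$ to $X\times\Delta^k$ via the composite of the projection $\partial I^n\to\ast$ with the map $\{s\}\times\Delta^k\to X\times\Delta^k$ recording the position of $s$ over $X^{\Delta^k}$, and $r_S:Y_S\to X\times\Delta^k$ is the collapse. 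This is visibly functorial in $S$ and compatible with the simplicial structure, and one checks it takes the trivial ordering on $S$ so the image really lies in $\D^1_n$.

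The next step is to verify the two compositions are naturally weakly equivalent to the identities. The composite $\Q_0 \to \D^1_n \to \Q_0$ is literally the identity on the nose, since forgetting $Y_S$ recovers $S$. For the other composite $\D^1_n \to \Q_0 \to \D^1_n$, given $(P,Y,r)$ we land on $(P, Y_P, r_P)$, the standardized wedge of cells on the same indexing set. Here I would construct a natural transformation $(P,Y_P,r_P)\to (P,Y,r)$ (or the reverse) covering the identity on $P$: because $P$ has only trivial ordering, no cell of $Y$ is attached to another cell, so every attaching map factors through $X\times\Delta^k$, and hence each cell of $Y$ is, rel its boundary, a map $I^n\times\Delta^k \to X\times\Delta^k\sqcup(\text{its own cell})$ which one can straighten to the standard one; the key point is that the attaching data of $Y$ is homotopic rel boundary to the collapsing attaching data, giving a parametrized homotopy equivalence $Y_P\simeq Y$ over $X\times\Delta^k$. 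This is a natural $h$-equivalence, but I also need that it is compatible with cofibrations and weak equivalences in the sense required to conclude $K$-theory equivalence — most cleanly via a natural transformation through cofibrations, or by appealing to Waldhausen's approximation theorem: the forgetful functor $\D^1_n\to\Q_0$ is exact, reflects weak equivalences (since a morphism is weak iff $\alpha$ is a bijection), and satisfies the approximation property because any map $P\to (P',Y',r')$ in $\Q_0$ from the image of $(P,Y_P,r_P)$ can be factored, using the standardization above, through a cofibration in $\D^1_n$ followed by a weak equivalence.

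Concretely I would organize the argument around Waldhausen's Approximation Theorem applied to the exact functor $\Phi:\D^1_n(\X)\to \Q_0(\X)$: verify (App 1) $\Phi$ reflects weak equivalences, which is immediate from the definitions, and (App 2) given an object $D=(P,Y,r)$ of $\D^1_n$ and a morphism $\Phi(D)=P\to S$ in $\Q_0$, one can factor it as $\Phi$ of a cofibration $D\cof D'$ in $\D^1_n$ followed by a weak equivalence $\Phi(D')\we S$ — here $D'$ is built by adding standard $n$-cells for the new points of $S$ and standardizing, using that in the trivially-ordered setting cells never obstruct one another. Granting the excerpt's established facts (the $\D$/$\Q$-type definitions, Remark on determinacy of morphisms, and the homotopy-theoretic identifications already recalled), Approximation then yields $Kx\D^1_n(\X)\simeq K\Q_0(\X)$, i.e. the claimed $\D^1_n(\X)\simeq \Q_0(\X)$.

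The main obstacle I anticipate is the second naturality check — showing that the two cell structures $Y_P$ and a general $Y$ with trivial ordering are genuinely equivalent \emph{parametrized over $\Delta^k$ and naturally in $(P,Y,r)$}, while staying inside the Waldhausen framework (i.e. realizing the comparison through cofibrations or verifying the approximation property rather than merely an abstract homotopy equivalence). Handling the $\Delta^k$-parametrization and the boundary-attaching subtleties carefully, and making the straightening choices natural, is where the real work lies; everything else is bookkeeping about the simplicial Waldhausen structures already set up in the paper.
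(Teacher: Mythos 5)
Your two functors --- the forgetful $f:\D^1_n(\X[k])\to\Q_0(\X[k])$ and the section $j$ that attaches every $n$-cell at its basepoint --- are exactly the paper's, and $f\circ j=\id$ is correct. The gap is in how you propose to establish $j\circ f\simeq\id$. Your first mechanism, a natural transformation $(P,Y_P,r_P)\to(P,Y,r)$ covering $\id_P$ (or a zig-zag through cofibrations), does not exist in $\D^1_n$: morphisms there are completely determined by the poset map and exist only when the cell structures match strictly (cofibrations must be embeddings of parametrized subcomplexes, weak equivalences are cellular homeomorphisms), so a map that merely ``straightens'' attaching maps up to homotopy is not a morphism of the category at all --- it would only live in a category with homotopy equivalences as weak equivalences, such as $\M$ or $\R^{hf}$. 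Your fallback, Waldhausen's approximation theorem for $f$, is also not available here: the weak equivalences of $\Q_0(\X[k])$ and $\D^1_n(\X[k])$ are bijections/isomorphisms, so the cylinder axiom fails (the projection $T(\alpha)\to P'$ is never a bijection for $P\neq\ast$) and App~2 cannot produce a factorization ``cofibration followed by a bijection'' for a non-injective pointed map $P\to S$. The approximation theorem is genuinely used in this paper only for the $h$-equivalence categories in the second appendix, where the weak equivalences are coarse enough.

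The missing idea is to realize your (correct) geometric intuition --- that the retraction $r$ lets you shrink each attaching map to the basepoint, since with trivial ordering no cell is attached to another cell --- as a \emph{simplicial} homotopy rather than a natural transformation. The paper defines
$$H:\D^1_n(\X[k])\times \Delta([k],[1])\to \D^1_n(\X[k]),\qquad ((Y,P,r),\alpha)\mapsto (Y_\alpha,P,r_\alpha),$$
where the cell with attaching map $\eta=r|_{\partial I^n\times\Delta^k}$ is reattached by $\eta_\alpha(s,t)=r(\alpha_*(t)s,t)$ with $\alpha_*:\Delta^k\to\Delta^1=I$. At $\alpha_*\equiv 1$ this is the identity and at $\alpha_*\equiv 0$ it is $j\circ f$, so $H$ is a homotopy of simplicial functors, which is what descends to a homotopy on realizations and on the $\S_\bullet$-constructions. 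Without this (or an equivalent device exploiting the simplicial direction), the proof is incomplete.
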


\begin{proof}  Let $f:\D^1_n(\X[k])\to \Q_0(\X[k])$ be the forgetful functor with 
	$$f(Y,P,r):=(P, \gamma),$$
where $\gamma:P\to \X[k]$ is given by the attachments of the basepoints of the cells that make $Y.$ Let $j:\Q_0(\X[k])\to \D^1_n(\X[k])$ be the functor that is given by 
	$$j(P,\gamma):=(Y,P,r),$$
where 
	$$Y=X\times \Delta^k \sqcup\bigsqcup_{p\in P}I^n\times \Delta^k/\sim$$
with all cells attached at their basepoint via $\gamma:P\to \X[k].$ The retraction $r$ is simply given by mapping the cells to their basepoint.

Clearly, the composition $f\circ j:\Q_0(\X[k])\to \Q_0(\X[k])$ is the identity. On the other hand, the composition $j\circ f:\D^1_n(\X)\to\D^1_n(\X)$ is given by contracting all attachment maps to attachments at the basepoints of the cells. A homotopy $j\circ f\sim \id$ comes from the functors
	$$H:\D^1_n(\X[k])\times \Delta([k],[1])\to \D^1_n(\X[k])$$
given by sending $((Y,P,r),\alpha)\mapsto (Y_\alpha,P,r_\alpha).$ Here $Y_\alpha$ has the same cells as $Y$ with different attachment maps: Let $\eta:\partial I^n\times \Delta^k\to X$ be the attachment map of a cell of $Y$ (Notice $\eta=r_{|\partial I^n}$), then the new attachment map is $\eta_\alpha:\partial I^n\times \Delta^k\to X$ with $\eta_\alpha(s,t)=r(\alpha_*(t)s,t),$ where $\alpha_*:\Delta^k\to \Delta^1=I$ is the induced map. The retraction $r_\alpha:I^n\times \Delta^k\to X$ is given by  $r_\alpha(s,t)=r(\alpha_*(t)s,t).$

\end{proof}

\begin{Proposition}\label{Prop5.3} We have 
	$$i\S_\bullet\D^1(\X)\simeq i\S_\bullet \D(\X)\simeq w\S_\bullet\D(\X).$$
\end{Proposition}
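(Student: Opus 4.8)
The plan is to derive both equivalences from the forgetful functor $f\colon\D(\X)\to\D^1(\X)$ that discards the partial order on the cell‑indexing poset while keeping the parametrized relative cell complex $Y$ and its retraction (this is legitimate because in an object of $\D(\X)$ the order only refines the skeletal filtration and places no further constraint on the attaching data), together with the inclusion $j\colon\D^1(\X)\into\D(\X)$ of the full subcategory of trivially ordered objects; note $f\circ j=\id$, so in each simplicial degree $\D^1(\X)$ is a retract of $\D(\X)$.

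First I would dispose of the two formal ingredients. \emph{(i)} For each object $(P,Y,r)$ the morphism $(P,Y,r)\to(P^0,Y,r)$ covered by $\id_Y$, where $P^0$ is $P$ with the trivial order, has bijective underlying poset map and is therefore a $w$-equivalence of $\D(\X)$; it is visibly natural in $(P,Y,r)$, so it assembles into a natural weak equivalence of exact functors $\id_{\D(\X)}\Rightarrow j\circ f$. By the standard fact that a natural weak equivalence of exact functors induces a homotopy on $w\S_\bullet$, this gives $w\S_\bullet(j\circ f)\simeq\id$, whence $w\S_\bullet j$ is a weak equivalence and $w\S_\bullet\D^1(\X)\simeq w\S_\bullet\D(\X)$. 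This is the exact analogue, for the expansion category, of the deformation retraction $\Q_I\to\Q_I^0$ recalled in Remark \ref{ordering}. \emph{(ii)} On $\D^1(\X)$ every $w$-equivalence is already an isomorphism: a bijection between trivially ordered graded sets is closed with closed inverse, and the covering cellular map is then a parametrized cellular homeomorphism whose inverse is again a morphism. Hence $i\S_\bullet\D^1(\X)=w\S_\bullet\D^1(\X)$.

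It then remains to prove the genuinely non‑formal assertion $i\S_\bullet\D^1(\X)\simeq i\S_\bullet\D(\X)$ — equivalently, that $i\S_\bullet\D(\X)\to w\S_\bullet\D(\X)$ is a weak equivalence; granting this and combining with \emph{(i)} and \emph{(ii)} one gets $i\S_\bullet\D^1(\X)\simeq i\S_\bullet\D(\X)$ and $i\S_\bullet\D^1(\X)\simeq w\S_\bullet\D^1(\X)\simeq w\S_\bullet\D(\X)$, so that all three spaces in the statement are weakly equivalent. The content is that a $w$-equivalence of $\D(\X)$ which is not an isomorphism differs from one only by a coarsening of the cell order — a morphism $(P,Y,r)\to(P',Y,r)$ over $\id_Y$ with $P'$ the same graded set as $P$ but with fewer order relations — and that such coarsenings do not see the homotopy type. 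I would establish this exactly as in the proof of the preceding Lemma: the partial orders refining a fixed skeletal filtration can be interpolated over the simplicial parameter, so that a chosen coarsening pushed across $\Delta^k$ produces functors $H\colon\D(\X[k])\times\Delta([k],[1])\to\D(\X[k])$ which cohere into a simplicial homotopy displaying $i\S_\bullet\D^1(\X)$ as a deformation retract of $w\S_\bullet\D(\X)$; a parallel application of Waldhausen's additivity theorem \cite{zbMATH03927168} along the skeletal filtration reduces the bookkeeping to a single cell dimension, where it is the content of the preceding Lemma.

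The hard part is precisely this last step. The easy argument in \emph{(i)} breaks for $i\S_\bullet$ because an order‑coarsening, although a $w$-equivalence, is not an isomorphism, so the natural transformation $\id\Rightarrow j\circ f$ does not lie in the subcategory of isomorphisms; one must therefore show — not merely hope — that remembering the cell order up to isomorphism carries no more homotopical information than remembering it up to $w$-equivalence. This rectification of the ordering is the technical heart of \cite{IgusaWaldhausen}; arranging the interpolations $H$ so that they are coherent in all simplicial degrees and compatible with the $\S_\bullet$-direction, and checking that the additivity bookkeeping along the skeletal filtration stays finite, is where the real work sits.
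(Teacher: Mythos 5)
There is a genuine gap, and it occurs already in step \emph{(i)}. The forgetful functor $f\colon\D(\X)\to\D^1(\X)$ that ``discards the partial order while keeping $Y$'' does not exist: an object of $\D(\X)$ must satisfy the condition that no cell is attached to a cell of equal or higher order, so if $P^0$ carries the trivial order then \emph{every} cell of $Y$ must be attached directly to $X\times\Delta^k$. Hence $(P^0,Y,r)$ is not an object of $\D^1(\X)$ unless $Y$ already had all cells attached to the base; the order is not a decoration refining a pre-existing skeletal filtration, it is exactly what licenses attaching cells to other cells. What the paper does instead is retract $w\S_\bullet\D(\X[k])$ onto the subcategory $\bar\D(\X[k])$ of objects carrying the \emph{minimal} order compatible with the given attaching data (keeping $Y$ unchanged), which yields $w\S_\bullet\D(\X[k])\simeq i\S_\bullet\bar\D(\X[k])\supset i\S_\bullet\D^1(\X[k])$. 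Your observation that $w$-equivalences in $\D(\X)$ are, up to isomorphism, order-coarsenings is correct and is what makes that retraction work; but it does not produce an equivalence with $\D^1$.

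The remaining assertion $i\S_\bullet\D^1(\X[k])\simeq i\S_\bullet\D(\X[k])$ — which you correctly identify as the heart of the matter — is not actually proved in your sketch, and the proposed reduction ``by additivity along the skeletal filtration to a single cell dimension, where it is the preceding Lemma'' misidentifies the difficulty. The preceding Lemma compares $\D^1_n$ with $\Q_0$; it says nothing about trading multi-layer attachments for base attachments. The paper's argument is an induction on the number of layers in which cells are attached ($\D^n\subset\D^{n+1}$), carried out via an auxiliary Waldhausen category $\Z$ of objects equipped with height functions $z\colon P\to\{0,\tfrac12,1\}$, a map of fibration sequences coming from the additivity theorem applied to the cofibration sequences $A\into B\onto B/A$ with $A\in\D^1$ and $B/A\in\D^n$, and Quillen's Theorem A to identify $w\S_\bullet\Z$ with $i\S_\bullet\D^{n+1}(\X[k])$. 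The modification of attaching maps via the retraction $r$ (your interpolation $H$) enters there only as the homotopy $jq\sim\id_\Z$; by itself it does not assemble into a deformation retraction of $i\S_\bullet\D(\X)$ onto $i\S_\bullet\D^1(\X)$, because pushing an attachment down to the base changes the object up to $h$-equivalence but not up to isomorphism or $w$-equivalence, so the comparison must be routed through K-theoretic fibration sequences rather than through a natural transformation.
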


\begin{proof} It will be enough to show $i\S_\bullet \D^1(\X[k])\simeq i\S_\bullet \D(\X[k])$ and that $w\S_\bullet \D(\X[k])$ is homotopy equivalent to a simplicial subcategory of $i\S_\bullet\D(\X[k])$ which contains $i\S_\bullet\D^1(\X[k]).$ We will show the latter first.

Let $\bar\D(\X[k])$ be the subcategory of $\D(\X[k])$ of objects $(Y,P,r)$ where the partial ordering on $P$ is as minimal as possible. While this is not a subcategory with cofibrations as it does not have all push-outs, it does have quotients and we can form $i\S_\bullet\bar\D(\X[k])\subset i\S_\bullet \D(\X[k])$ and this subcategory contains $i\S_\bullet \D^1(\X[k])$ as demanded. Let 
	$$g:w\S_\bullet\D(\X[k])\to i\S_\bullet\bar\D(\X[k])$$
be the functor given by sending $(Y,P,r)$ to $(Y,P',r)$ where $P=P'$ as sets and $P'$ has the minimally necessary ordering. Let 
	$$j:i\S_\bullet\bar\D(\X[k])\to w\S_\bullet\D(\X[k])$$
be given by the inclusion. Then we have $gj=\id$ and $jg\sim \id$ as weak equivalences in $w\D(\X[k])$ are given by set-bijections on the posets.

It now suffices to show that $i\S_\bullet\D^1(\X[k])\simeq i\S_\bullet\D(\X[k]).$ Let $\D^n(\X[k])$ be the full subcategory of $\D(\X[k])$ in which cells are attached in no more than $n$ layers. We will show inductively $i\S_\bullet \D^n(\X[k])\simeq i\S_\bullet \D^{n+1}(\X[k]).$

Let $\Z$ be the Waldhausen category with objects being pairs $((Y,P,r),z)$ where $(Y,P,r)\in \D^{n+1}(\X[k])$ and $z:P\to \{0,\frac 1 2 ,1\}$ is a ``height function'' with
	
	\begin{itemize}
		\item Every element of $z^{-1}\{0,\frac 1 2 \}$ is minimal and
		\item the poset $z^{-1}\{\frac 1 2 ,1\}$ does not contain any $(n+1)$-chains.
	\end{itemize}
  The morphisms are the morphisms $(f,\alpha):(Y,P,r)\to (Y',P',r')$ such that $\alpha$ takes $z^{-1}\{0\}$ into $(z'^{-1}\{0\})_+$ and  $z^{-1}\{1\}$ into $(z'^{-1}\{0,1\})_+.$ A cofibration is a height perserving cofibration in $\D(\X[k])$ and a weak equivalence $(f,\alpha):(Y,P,r)\to (Y',P',r')$ induces an isomorphism $Y\to Y'$ so that $\alpha$ sends $z^{-1}\{1\}$ into $z'^{-1}\{1\}.$
  
Let $\Z_0$ be be the full Waldhausen subcategory of $\Z$ with objects from $\D^1(\X[k])$ and let $\E$ be the full Waldhausen subcategory of $\Z$ given by $z^{-1}\{\frac 1 2 \}=\emptyset$ and let $\E_0=\E\cap \Z_0.$ Then $\E$ is exactly equivalent to the category of cofibrations $A\into B\onto B/A$ with $A\in \D^1(\X[k]),$ $B\in \D^{n+1}(\X[k]),$ and $B/A\in \D^n(\X[k])$ and $\E_0$ is exactly the category of cofibration sequences in $\D^1(\X[k]).$ By  the additivity theorem \cite{zbMATH03927168} and induction we have
	$$i\S_\bullet \E\simeq i\S_\bullet(\D^1(\X[k])\times \D^n(\X[k]))\simeq i\S_\bullet (\D^1(\X[k])\times \D^1(\X[k]))\simeq i\S_\bullet \E_0.$$
	
We will consider the following map of fibration sequences
	$$\xymatrix{
		i\S_\bullet\E_0\ar[r]\ar[d]	&	w\S_\bullet \Z_0\ar[d]\ar[r]	&	w\S_\bullet\S_\bullet(\E_0\to\Z_0)\ar[d]\\
		i\S_\bullet\E\ar[r]	&	w\S_\bullet \Z\ar[r]	&	w\S_\bullet\S_\bullet(\E\to\Z).
		}
	$$
The left hand map is a weak equivalence. If we can show that the right hand map also is one, we will get $w\S_\bullet \Z_0\simeq w\S_\bullet \Z$ which in light of Lemma \ref{Lemma5.4} proves the proposition. We have $w\S_\bullet\S_\bullet(\E\to\Z)\simeq w\S_\bullet \Z k\Z$ and $w\S_\bullet\S_\bullet(\E_0\to\Z_0)\simeq w\S_\bullet \Z_0 k\Z_0.$

Let $j:\Z_0\to \Z$ be the inclusion functor and $q:\Z\to\Z_0$ be the functor that changes the attachment maps $\psi:\partial I^n\times \Delta^k\to X\times \Delta^k$ to $r\circ \psi$ where $r:Y\to X$ is the retraction, thereby attaching cells to the base directly. Clearly, we have $qj=\id_{\Z_0}.$ Consider the functor $h:\Z\to \Z_0$ given by sending $(Y,P,r)$ to only its minimal cells. There clearly are natural transformations given by inclusions
	$$\id_\Z\ot jh\to jq.$$
While the functor $h$ is not exact, it still gives a morphism of bicategories
	$$h:\S_n\Z k\Z\to w\S_n \Z_0 k\Z_0$$
and the tranformations above give homotopies between $jq$ and $\id$ considered as functors $h:\S_n\Z k\Z\to w\S_n \Z k\Z.$

\end{proof}

\begin{Lemma}\label{Lemma5.4} The forgetful functors $\Z\to \D^{n+1}(\X[k])$ and $\Z_0\to \D^1(\X[k])$ induce weak equivalences $w\S_\bullet \Z\simeq i\S_\bullet\D^{n+1}(\X[k])$ and $w\S_\bullet \Z_0\simeq i\S_\bullet\D^{1}(\X[k]).$
\end{Lemma}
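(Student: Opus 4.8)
The plan is to exhibit, for each of the two forgetful functors, an explicit homotopy inverse together with natural transformations realizing the homotopies, exactly as in the analogous retraction arguments used earlier (Remark \ref{ordering} and the proof of Proposition \ref{Prop5.3}). Concretely, write $u:\Z\to\D^{n+1}(\X[k])$ for the forgetful functor that discards the height function $z$, and let $s:\D^{n+1}(\X[k])\to\Z$ be the section that equips an object $(Y,P,r)$ with the ``extremal'' height function $z$ defined by $z(p)=0$ if $p$ is minimal in $P$ and $z(p)=1$ otherwise (one checks this $z$ meets the two constraints in the definition of $\Z$: every element of $z^{-1}\{0\}$ is minimal by construction, $z^{-1}\{\tfrac12\}=\emptyset$, and $z^{-1}\{\tfrac12,1\}=z^{-1}\{1\}$ is a sub-poset of $P$ which, being an object of $\D^{n+1}$, contains no $(n{+}1)$-chain). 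First I would verify that $s$ is exact — cofibrations and weak equivalences in $\D^{n+1}$ are sent to height-preserving ones essentially by definition of the height function on a monomorphism/bijection — and that $u\circ s=\id$ on the nose. Then it remains to produce a natural weak equivalence $s\circ u\simeq\id_\Z$; since $s\circ u$ only alters the height function and not the underlying triple $(Y,P,r)$, the identity on the underlying object is a morphism $(Y,P,r,z)\to(Y,P,r,z^{\mathrm{ext}})$ once we know $z\le z^{\mathrm{ext}}$ pointwise in the ordering $0<\tfrac12<1$ — which fails in general because $z$ may take the value $\tfrac12$ where $z^{\mathrm{ext}}$ takes $0$.

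To repair this I would interpolate through an intermediate height function, or rather factor the comparison as $z\rightsquigarrow z'\rightsquigarrow z^{\mathrm{ext}}$ where $z'$ replaces every value $\tfrac12$ by $1$: the map raising $\tfrac12$ to $1$ is a morphism in $\Z$ (it takes $z^{-1}\{0\}$ into $(z'^{-1}\{0\})_+$ and $z^{-1}\{1\}$ into $(z'^{-1}\{0,1\})_+$ trivially), and then lowering the remaining $1$'s on minimal elements to $0$ is also a morphism since its target $z^{\mathrm{ext}}$ has $z^{\mathrm{ext},-1}\{0\}$ exactly the minimal elements. Both steps are identities on underlying objects and hence weak equivalences in $\Z$ (their underlying map $Y\to Y$ is the identity, a fortiori an isomorphism sending $z^{-1}\{1\}$ into the target's $1$-locus after the first step; the second step needs a moment's care because it lowers a $1$ to $0$, so I would instead arrange the zig-zag as $z\to z'\leftarrow z^{\mathrm{ext}}$, using that $s\circ u$ is more naturally compared to the identity via $z'$ from both sides). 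This produces a natural zig-zag of weak equivalences between $s\circ u$ and $\id_\Z$, hence $w\S_\bullet u$ is a homotopy equivalence with inverse $w\S_\bullet s$, using that $w\S_\bullet$ inverts weak equivalences and sends natural weak equivalences to homotopies.

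The statement for $\Z_0\to\D^1(\X[k])$ is the restriction of the same construction: an object of $\D^1$ has only trivial ordering, so every non-base cell is minimal, the section $s$ assigns the height function identically $0$ on all of $P$ (which automatically lies in $\Z_0$), $u\circ s=\id$, and $s\circ u$ differs from the identity only by possibly lowering heights, giving the same zig-zag argument verbatim. No new input is needed beyond the $\D^{n+1}$ case.

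I expect the main obstacle to be the bookkeeping around the asymmetry of the allowed morphisms in $\Z$: the defining condition ``$\alpha$ takes $z^{-1}\{0\}$ into $(z'^{-1}\{0\})_+$ and $z^{-1}\{1\}$ into $(z'^{-1}\{0,1\})_+$'' permits raising a height from $0$ to $1$ along a morphism but not lowering from $1$ to $0$, so the comparison $s\circ u\simeq\id$ cannot be a single natural transformation in one direction and must be threaded through the auxiliary height function $z'$ (all values in $\{0,1\}$, $\tfrac12$'s promoted); checking that each leg of the resulting zig-zag genuinely lands in $\Z$ (respectively $\Z_0$) and is a weak equivalence there — i.e. that the underlying $Y\to Y$ is the identity and the height-compatibility clause of ``weak equivalence'' is met — is the one place where one has to be genuinely careful rather than formal. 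Everything else (exactness of $s$, $u s=\id$, passage to $w\S_\bullet$) is routine.
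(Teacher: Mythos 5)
Your strategy (a global section $s$ of the forgetful functor $u$ plus a natural zig-zag of weak equivalences between $s\circ u$ and $\id_{\Z}$) is not the paper's, which instead applies Quillen's Theorem A to $w\S_m\Z\to i\S_m\D^{n+1}(X^{\Delta^k})$ and exhibits an \emph{initial object} in each comma category $f_m/P$. More importantly, your version has a genuine gap, and it stems from a misreading of the morphism condition in $\Z$: the clause ``$\alpha$ takes $z^{-1}\{0\}$ into $(z'^{-1}\{0\})_+$ and $z^{-1}\{1\}$ into $(z'^{-1}\{0,1\})_+$'' forbids \emph{raising} an element from height $0$ (and forbids sending a $1$ to $\tfrac12$), while it \emph{permits} lowering a $1$ to a $0$; only elements at height $\tfrac12$ are unconstrained. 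You state the asymmetry exactly backwards, and this breaks your zig-zag: with $z'$ the function obtained from $z$ by promoting all $\tfrac12$'s to $1$, the leg $z\to z'$ is indeed a weak equivalence, but the second leg connecting $z'$ and $z^{\mathrm{ext}}$ fails in both directions whenever $z^{-1}\{\tfrac12\}\neq\emptyset$ (or $z^{-1}\{1\}$ contains minimal elements): the identity $(Y,P,r,z^{\mathrm{ext}})\to(Y,P,r,z')$ is not even a morphism, since it raises the minimal elements of $z^{-1}\{\tfrac12\}$ from $0$ to $1$; and the identity $(Y,P,r,z')\to(Y,P,r,z^{\mathrm{ext}})$ is a morphism but not a weak equivalence, since it sends those same elements from $z'^{-1}\{1\}$ into $z^{\mathrm{ext},-1}\{0\}$, violating the requirement that $\alpha(z'^{-1}\{1\})\subset z^{\mathrm{ext},-1}\{1\}$. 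The same failure occurs already for $\Z_0\to\D^1$, where your section is the constant function $0$ but a general object may have heights $\tfrac12$ and $1$ on its (all minimal) elements.

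The missing idea is that the correct ``canonical'' height function is not the extremal one but the one that keeps as many elements as possible at the undecided value $\tfrac12$: namely $z_0(x)=1$ if $x$ is not minimal, $z_0(x)=0$ if $x$ is minimal and lies on an $(n{+}1)$-chain, and $z_0(x)=\tfrac12$ otherwise. One checks that for \emph{every} admissible $z$ on $(Y,P,r)$ the identity is a weak equivalence $(Y,P,r,z_0)\to(Y,P,r,z)$ (non-minimal elements are forced into $z^{-1}\{1\}$, and minimal elements on $(n{+}1)$-chains are forced into $z^{-1}\{0\}$ by the no-$(n{+}1)$-chain condition on $z^{-1}\{\tfrac12,1\}$), so $z_0$ is initial among height functions over a fixed underlying object. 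This is precisely why the paper works comma-category-wise via Theorem A rather than with a global section: it is not clear that $x\mapsto z_0$ is natural for arbitrary morphisms of $\D^{n+1}(X^{\Delta^k})$ (a closed poset map need not preserve the property of lying on an $(n{+}1)$-chain), whereas initiality in each $f_m/P$ requires no such naturality. If you want to keep your retraction-style argument you would have to either verify this naturality or replace it with the comma-category argument; as written, the comparison of $s\circ u$ with the identity does not go through.
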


\begin{proof} We will use Qillen's Theorem A \cite{MR2655184} to show that $f_m:w\S_m\Z\to i\D^{n+1}(\X[k])$ is a weak equivalence by showing that $f_m/P$ is contractible for every $P=((Y_1,P_1,r_1)\into\ldots\into (Y_m,P_m,r_m)).$ We will do so by providing an initial object, first in the case $m=1.$ In this case $P$ is a single complex with poset $P_1.$ We can give a height function $z_0:P_1\to \{0,\frac 1 2 ,1\}$ by
	$$z_0(x):=\left\{ \begin{array}{ll}
			1 & \textnormal{if $x$ is not minimal}\\
			0 & \textnormal{if $x$ is minimal and belongs to a chain of length $n+1$}\\
			\frac 1 2  & \textnormal{else}
			\end{array}
			\right.
	$$
This provides the initial object. The case $m>1$ and the case of $Z_0$ are similar.
\end{proof}

Finally we can prove:

\begin{Theorem} The simplicial forgetful functor $\E(\X)\to \Q(\X)$ induces a weak equivalence
	$$Kx\E(\X)\simeq Kw\Q(\X)\simeq \Omega^\infty\Sigma^\infty X_+.$$
\end{Theorem}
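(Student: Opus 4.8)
The plan is to assemble the results already established in the appendix. Since the Corollary in the main text gives $Kw\Q(\X)\simeq\Omega^\infty\Sigma^\infty X_+$, it suffices to show that the forgetful functor $\E(\X)\to\Q(\X)$ — which sends $x$-equivalences to $w$-equivalences and cofibrations to cofibrations — induces a weak equivalence $Kx\E(\X)\to Kw\Q(\X)$. I would factor this forgetful functor through the expansion-pair-free cousin $\D(\X)$ and exploit Proposition \ref{Prop5.3} together with the mechanism of Remark \ref{ordering}.

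First I would pass from $\E$ to $\D$. Exactly as $\Q$ is obtained from Igusa's $\Q_I$ by the nerve-along-elementary-expansions construction $e_\bullet$ of Remark \ref{ordering}, the category $\E$ is built from $\D$ by the same device: the bicategory comparison in the Observation following Remark \ref{ordering} identifies $|x\E(\X)|$ — the realization of the bicategory whose objects are those of $\D(\X)$, whose vertical morphisms are expansions, and whose horizontal morphisms are the weak equivalences of $\D(\X)$ — with $|we_\bullet\D(\X)|$. The relative cell-complex data $Y$ rides along automatically, since a morphism of $\D$ is determined by its underlying map of graded posets and an expansion attaches a canonical pair of cancelling cells directly to $X\times\Delta^k$. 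Running this levelwise in the $\S_\bullet$-direction yields $Kx\E(\X)\simeq Kwe_\bullet\D(\X)$, and does so compatibly with the cell-forgetting functors down to $\Q$ and $\Q_I$.

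Next I would compute $Kwe_\bullet\D(\X)$. Proposition \ref{Prop5.3} reduces $\D$ to the one-layer model, $Kw\D(\X)\simeq Ki\D^1(\X)$, and $\D^1(\X)$ splits along the cell dimension into a restricted product of the pieces $\D^1_n(\X)$; the Lemma identifies each $\D^1_n(\X)$, via the cell-forgetting functor, with the category $\Q_0(\X)$ of finite sets over $\X$ (equivalently with a single graded summand $\Q^0_{I,n}(\X)$), and Segal's theorem \cite{MR0331377} gives $Kw\Q_0(\X)\simeq\Omega^\infty\Sigma^\infty X_+$ on each summand. Applying $e_\bullet$ to the expansion pairs mends these summands together exactly as in Remark \ref{ordering}: an expansion pair joins a cell in dimension $d$ to one in dimension $d+1$, so passing to $we_\bullet$ identifies the degree-$n$ pieces, whence $Kwe_\bullet\D(\X)\simeq Kwe_\bullet\Q_I(\X)\simeq Kw\Q(\X)\simeq\Omega^\infty\Sigma^\infty X_+$, the last two equivalences being those of Remark \ref{ordering} and its Observation. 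Chaining these identifications, the forgetful functor $\E(\X)\to\Q(\X)$ induces $Kx\E(\X)\simeq Kw\Q(\X)\simeq\Omega^\infty\Sigma^\infty X_+$, as asserted.

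The step I expect to require the most care is the identification $Kx\E(\X)\simeq Kwe_\bullet\D(\X)$ and its compatibility with the $\S_\bullet$-construction: one must check that the contractible spaces of choices involved in equipping a fixed graded poset with a cell structure (the homotopy used in the proof of the Lemma, which collapses attaching maps to basepoints) are natural enough to survive both the expansion morphisms and the passage to $\S_\bullet\D$, so that the bicategory argument of Remark \ref{ordering}'s Observation applies verbatim in the presence of the spaces $Y$. Everything else is a bookkeeping application of Proposition \ref{Prop5.3}, Lemma \ref{Lemma5.4}, the additivity theorem \cite{zbMATH03927168}, the Lemma, and Segal's theorem.
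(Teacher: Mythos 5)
Your route is genuinely different from the paper's, and it has a gap at its crux, so let me first record the difference and then the missing step. The paper does not pass through $e_\bullet\D(\X)$ at all. It filters $x\E(\X)$ by cell degree, writing $x\S_\bullet\E(\X)\simeq\colim_j x\S_\bullet\E_0^j(\X)$, observes that the single-degree piece $\E_i^i(\X)\cong\D_i(\X)$ carries no expansion pairs (so its $x$-equivalences are bijections, and Proposition \ref{Prop5.3} together with the Lemma identify it with $\Q_0(\X)$ and hence with $Q(X_+)$), and then proves that the inclusion $x\S_\bullet\E_i^i(\X)\into x\S_\bullet\E_i^j(\X)$ is an equivalence. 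That last step carries essentially all the content of the theorem, and the paper proves it by a localization argument: the subcategory $\B$ of objects consisting only of expansion pairs, the auxiliary classes of $u$- and $v$-equivalences, the fibration sequence $i\S_\bullet\B\to u\S_\bullet\E_i^j(\X[k])\to uv\S_\bullet\E_i^j(\X[k])$, the additivity-theorem identification of $i\S_\bullet\B$ with $i\S_\bullet(\D^1)_{i+1}^j(\X[k])$, and the splitting $x\S_n\E_i^j\simeq uv\S_n\E_i^j$.

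Your first step, replacing $Kx\E(\X)$ by $Kwe_\bullet\D(\X)$ via the bicategory comparison of the Observation after Remark \ref{ordering} (with the cells riding along because expansion pairs carry a canonical cancelling cell structure), is plausible, provided it is run inside each $\S_n$. The gap is in the second step. The claim that ``applying $e_\bullet$ mends these summands together exactly as in Remark \ref{ordering}'' cannot simply be imported: Remark \ref{ordering} (Igusa's 5.6.5) is a statement about the bare poset category $\Q_I(\X)$, whereas you need it for $\D(\X)$, equivalently that the cell-forgetting functor induces an equivalence $Kwe_k\D(\X)\to Kwe_k\Q_I(\X)$ for every $k$, not only for $k=0$. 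The $k=0$ case is Proposition \ref{Prop5.3} plus the Lemma; for $k\geq 1$ you would have to rerun those arguments for objects equipped with a flag of expansion-pair sets, or exhibit some other naturality of the equivalence $Kw\D(\X)\simeq\prod_n K\Q_0(\X)$ with respect to the expansion nerve, and nothing in your write-up does this. As it stands, the quoted sentence restates exactly what is to be proved --- that in the presence of cell data the degree-graded summands really do get identified after collapsing expansion pairs --- and this is precisely the work the paper's $\B$/$u$/$v$ localization sequence performs.
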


\begin{proof} For $i\leq j$ let $\E^j_i(\X)$ be the subcategory of cell complexes with cells only with degrees between $i$ and $j.$ Since expansion pairs require cells in different dimensions we have $\E^i_i(\X)\cong \D_i(\X),$ where $\D_i(\X)$ only  contains cells in dimension $i.$ From the discussion above we have
	$$x\S_\bullet\E^i_i(\X)\simeq w\S_\bullet\D_i(\X)\simeq i\S_\bullet\D^1_i(\X)\simeq i\S_\bullet \Q_0(\X)\simeq w\S\Q(\X).$$

We also have
	$$x\S_\bullet \E(\X)\simeq \colim_j x\S_\bullet \E_0^j(\X).$$
So it suffices to show the following lemma:

\begin{Lemma} The inclusion induces a homotopy equivalence
	$$x\S_\bullet\E^i_i(\X)\simeq x\S_\bullet\E^j_i(\X)$$
for all $0\leq i\leq j.$
\end{Lemma}

\begin{proof} Let $\B\subset \E_i^j(\X[k])$ be the subcategory of all $(Y,P,r)$ such that $P$ consists only of expansion pairs. Furthermore let $k\E_i^j(\X[k])$ be the subcategory of all cofibrations in $\E^i_j(\X[k])$ with quotients in $\B.$ Let $v\E_i^j(\X[k])$ be the subcategory of $x\E_i^j(\X[k])$ of all collapsing maps $(f,\alpha)$ such that $\ker\alpha$ is a union of expansion pairs and $\alpha$ induces an isomorphism of graded posets when restricted to $\textnormal{coim}\, \alpha.$ Notice that the $v$-weak equivalences are canonical left-inverses for the $k$-weak equivalences. This can be used to show that the $v$-equivalences do in fact form a category of generalized equivalences. 

Let $u\E_i^j(\X[k])$ be the subcategory of $x\E_i^j(\X[k])$ of all $(f,\alpha)$ where $\alpha$ is a bijection. This again is a category of weak equivalences. We conclude that there is a homotopy fiber sequence
	$$i\S_\bullet \B\to u\S_\bullet \E_i^j(\X[k])\to uv\S_\bullet\E_i^j(\X[k]),$$
where the latter is the simplicial bicategory given by $u\S_n\E_i^j(\X[k])v\S_n\E_i^j(\X[k])$ in degree $n.$

We continue to identify the terms of this sequence. First of all, we see that $\B$ is equivalent as a category with cofibrations to $(\D^1)_{i+1}^j(\X[k]).$ Consequently we have
	$$i\S_\bullet\B\simeq i\S_\bullet(\D^1)_{i+1}^j(\X[k]).$$
	
Let $\varepsilon:\E_i^j(\X[k])\to \D_i^j(\X[k])$ be the functor that unpairs all expansion pairs and $j$ the inclusion. We get $\varepsilon\circ  j=\id$ and there is a natural $u$-equivalence $j\varepsilon \simeq \id,$ so overall we learn
	$$u\S_\bullet\E_i^j(\X[k])\simeq w\S_\bullet \D_i^j(\X[k]).$$
Furthermore by Proposition \ref{Prop5.3} above we have
	$$w\S_\bullet\D_i^j(\X[k])\simeq i\S_\bullet(\D^1)_i^j(\X[k])\simeq  i\S_\bullet(\D^1)_{i+1}^j(\X[k])\times  i\S_\bullet(\D^1)_i^i(\X[k]).$$

From the homotopy fiber sequence above we can conclude
	$$uv\S_\bullet \E_i^j(\X[k])\simeq i\S_\bullet (D^1)_i^i(\X[k])\simeq x\S_\bullet \E_i^i(\X[k]).$$
So it remains to show
	$$uv\S_\bullet \E_i^j(\X[k])\simeq x\S_\bullet \E_i^j(\X[k]).$$

Notice that every $x$-equivalence in $\E_i^j(\X[k])$ splits naturally as the composition of a $u$- and $v$-equivalence. This carries through to give a splitting
	$$x\S_n\E_i^j(\X[k])\simeq u\S_n\E_i^j(\X[k])v\S_n\E_i^j(\X[k])\simeq uv\S_n\E_i^j(\X[k]).$$

\end{proof}
	
\end{proof}

\subsection{The Homotopy Type of $\K h\E(X^{\Delta^\bullet})$ }
\label{homotopyofA}

We move on to show
	$$Kh\E(\X)\simeq A(X).$$
We still follow \cite{IgusaWaldhausen}. Again, we will use a supplemental category. However, before defining it, we need to lay some ground work defining mapping cylinders in various categories as we will be using Waldhausen's approximation theorem \cite{zbMATH03927168}.

\begin{Definition} Given a morphism $\alpha:P\to P'$ in $\Q(\X[k])$ we define its mapping cylinder $T(\alpha):=P\vee P'\vee \Sigma\delta P$ where $\delta P$ deletes all expansion pairs and $\Sigma$ increases every degree by 1. Furthermore we have $z\leq \sigma x$ for $z\in P',$ $\sigma x\in \Sigma\delta P$ iff $z\leq \alpha(x).$ There are obvious maps $P\vee P'\into T(\alpha)$ and $T(\alpha)\to P'.$
\end{Definition}

\begin{Definition} Let $(f,\alpha):(Y,P,r)\to (Y',P',r')$ be a morphism in $\E(\X[k]),$ then its mapping cylinder is given by $(T(f), T(\alpha),r'')$ where $T(f)$ is the  topological reduced mapping cylinder  and $T(\alpha)$ is the mapping cylinder from above. The retraction $r''$ is given canoincally.
\end{Definition}

One can verify that these define proper cylinder functors on the Waldhausen categories $\Q(\X[k])$ and $\E(\X[k]).$

\begin{Definition} Let $X$ be a space, we define the Waldhausen category $\M(\X[k])$ in the following way:
	\begin{itemize}
		\item The objects of $\M(\X[k])$ are $(Y,P,r)$ -- the same as for $\E(X[k]).$
		\item A morphism is a pair $(f,\alpha):(Y,P,r)\to (Y',P',r')$ where 
			\begin{itemize}
				\item $\alpha:\Lambda P\to \Lambda P'$ is a $\vee$-preserving map and $\Lambda P$ is the set of closed subsets of $P$
				\item $f:Y\to Y'$ is a continuous map fixing $X\times \Delta^k$ and commuting with the retraction such that
				\item $f$ maps $Y^A$ into $Y^{\alpha(A)}$
			\end{itemize}
		\item A map $(f,\alpha)$ is a cofibration if 
			\begin{itemize}
				\item $\alpha$ is induced via the inclusion $P\to \Lambda P$ ($x\mapsto \bar x:= \{y|y\leq x\}$) by a cofibration $P\to P'$ in $\Q(\X[k])$
				\item $f$ is a homeomorphism of $Y^A$ onto $Y^{\alpha (A)}$ for all $A\in \Lambda P.$
			\end{itemize}
		\item A weak equivalence $(f,\alpha)$ is an $h$-equivalence meaning that $f$ is a homotopy equivalence.
	\end{itemize}
\end{Definition}

\begin{Remark} The two main differences between $\E(\X[k])$ and $\M(\X[k])$ are that the latter has no expansion pairs and more morphisms, as every morphism $(f,\alpha)$ in $\E(\X[k])$ is completely determined -- if existent -- by $\alpha.$
\end{Remark}

\begin{Definition} Let $(f,\alpha):(Y,P,r)\to (Y',P',r')$ be a morphism in $\M(\X[k]).$ It's mapping cylinder is given by $(T(f),T(\alpha),r'')$ where $T(\alpha)=P\vee P'\vee \Sigma P.$ The projection $T(\alpha)\to P'$ is given by sending a closed subset $A\vee B\vee \Sigma C$ to $\alpha(A)\cup B.$ Again, $T(f)$ is the mapping cylinder.
\end{Definition}

This defines a cylinder functor on $\M(\X[k]).$

\begin{Lemma} We have
	$$h\S_\bullet\M(\X[0])\simeq h\S_\bullet \R_f(X)$$
and hence $Kh\M(\X[0])\simeq A(X).$
\end{Lemma}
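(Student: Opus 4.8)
The plan is to compare $\M(\X[0])$ with the standard Waldhausen category $\R_f(X)$ of finite retractive CW complexes over $X$ via the evident forgetful functor, apply Waldhausen's approximation theorem \cite{zbMATH03927168} (following \cite{IgusaWaldhausen}), and then invoke the classical fact that finite retractive complexes compute $A(X)$. Concretely, let $U\colon \M(\X[0])\to\R_f(X)$ send $(Y,P,r)$ to the underlying retractive space $(X\to Y\xrightarrow{r} X)$, forgetting the graded poset. Since cofibrations in $\M(\X[0])$ are by definition subcomplex inclusions induced by cofibrations $P\to P'$ in $\Q(\X[0])$, the initial object $X$ is preserved, and a morphism is a weak equivalence in either category exactly when its underlying continuous map is a homotopy equivalence, $U$ is an exact functor of Waldhausen categories.

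The first hypothesis of the approximation theorem --- a morphism of $\M(\X[0])$ is a weak equivalence iff its image under $U$ is --- is immediate from that last remark. For the second, I would use the mapping cylinder functor on $\M(\X[0])$ constructed just above the lemma. Given $g\colon U(Y,P,r)\to Z$ in $\R_f(X)$, equip $Z$ with a finite relative CW structure and an associated graded poset $P_Z$ (every finite relative CW complex admits one: grade by cell dimension and order $q<q'$ whenever $\dim c_q<\dim c_{q'}$ and $c_q$ meets the attaching image of $c_{q'}$), so that $\tilde Z=(Z,P_Z,r_Z)$ lies in $\M(\X[0])$. Lift $g$ to a morphism $(g,\alpha)\colon(Y,P,r)\to\tilde Z$ by sending a closed set $A\subset P$ to the smallest closed $\alpha(A)\subset P_Z$ whose realization contains $g(Y^A)$; this $\alpha$ is $\vee$-preserving because $g(Y^{A\cup B})=g(Y^A)\cup g(Y^B)$. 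The mapping cylinder $T(g,\alpha)=(T(g), P\vee P_Z\vee\Sigma P, r'')$ then carries a cofibration $(Y,P,r)\cof T(g,\alpha)$ (the front inclusion) and a weak equivalence $T(g,\alpha)\we\tilde Z$ (the cylinder projection) whose composite is $(g,\alpha)$; applying $U$ gives the desired factorization $Y\cof T(g)\we Z$ of $g$. The approximation theorem then yields $h\S_\bullet\M(\X[0])\simeq h\S_\bullet\R_f(X)$, hence $Kh\M(\X[0])=\Omega|h\S_\bullet\M(\X[0])|\simeq\Omega|h\S_\bullet\R_f(X)|\simeq A(X)$, the last equivalence being Waldhausen's theorem that $\R_f(X)\into\R^{hf}(X)$ induces a $K$-theory equivalence \cite{zbMATH03927168}.

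The hard part will be making the second step honest: one must verify that the topological reduced mapping cylinder $T(g)$ of a general continuous retractive map really carries a relative cell structure indexed by $P\vee P_Z\vee\Sigma P$ in which no cell is attached to a cell of equal or higher order. This is precisely why $\M$ is introduced with its enlarged morphism set ($\alpha$ ranging over $\vee$-preserving maps of lattices of closed subsets) and its particular cylinder functor rather than working with $\E$ directly, but it still demands care: a cellular approximation of $g$, or a subdivision of $Z$, is needed to keep the cylinder cell over a cell of $Y$ from being attached across higher-dimensional cells of $Z$, all while keeping the factorization strictly (not just homotopy-) commutative, as the theorem requires. I would also need to check the bookkeeping that $\M(\X[0])$ is a genuine category with cofibrations (the required pushouts along subcomplex inclusions exist in $\M$) and that its cylinder functor satisfies Waldhausen's cylinder axiom, so that the approximation theorem applies verbatim.
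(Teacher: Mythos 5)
Your argument is correct and rests on the same two pillars as the paper's --- Waldhausen's approximation theorem and the mapping cylinder functor on $\M(\X[0])$ --- but it is organized differently. The paper does not compare $\M(\X[0])$ and $\R_f(X)$ directly: it interposes the subcategory $\M(\X[0])^{CW}$ of objects whose cells are attached in order of degree, and runs the approximation theorem twice, once for the inclusion $\M(\X[0])^{CW}\to\R_f(X)$ (declared straightforward) and once for $\M(\X[0])^{CW}\to\M(\X[0])$, where CW-approximation of the target and cellular approximation of the map are used before forming the cylinder. You instead apply the approximation theorem once, to the forgetful functor $\M(\X[0])\to\R_f(X)$, choosing a cell structure and poset on the target $Z$ and lifting $g$ to a morphism of $\M$ so that its $\M$-cylinder provides the factorization. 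This is a legitimate shortcut, and it buys something real: because objects of $\M$ only require cells to be attached below themselves in the \emph{poset} order (not in order of dimension), the cylinder cells $\Sigma P$ sit above everything they touch once $\alpha$ is chosen so that $g(Y^{\bar c})\subset Z^{\alpha(\bar c)}$, so the cellular-approximation step you worry about in your last paragraph is actually unnecessary --- that flexibility is exactly what distinguishes $\M$ from $\M^{CW}$, and it is the reason the paper's second, more delicate approximation argument can be folded into your single one. Two small points to tighten: the partial order you propose on $P_Z$ (requiring $\dim c_q<\dim c_{q'}$) only works if $Z$ is given an honest CW structure; for a general finite relative cell complex you should take the order generated by ``$c_q$ meets the attaching image of $c_{q'}$'' using the order of attachment. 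And, as you note, one must still record that $\M(\X[0])$ satisfies the cylinder and saturation axioms so the approximation theorem applies; the paper asserts this just before the lemma.
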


\begin{proof} Let $\M(\X[0])^{CW}$ be the subcategory of $\M(\X[0]),$ where all cells are attached in order of degree (hence all objects are CW-complexes). This becomes a Waldhausen subcategory with cylinder functor.

We will use Waldhausen's approximation theorem to show that both inclusions $h\S_\bullet\M(\X[0])^{CW}\to h\S_\bullet\R_f(X)$   and $h\S_\bullet\M(\X[0])^{CW}\to h\S_\bullet\M(\X[0])$ are weak equivalences.  For the former this is straight forward. 

For the latter let $(f,\alpha):(Y,P,r)\to (Y',P',r')$ be any morphism in $\M(\X[0])$ such that $Y$ is a CW-complex. By CW-approximation there is a weak equivalence $f':(Y'',r'')\to (Y',r')$ where $Y''$ is a CW complex. Let $P$ be the graded poset of cells of $Y''.$ We define a morphism $(f',\alpha):(Y'',P'',r'')\to (Y',P',r')$ by setting $\alpha(A)=P'$ for all $A\in \Lambda P''.$ 

Cellular approximation gives a homotopy approximation $f_h:(Y,r)\to (Y'',r'')$ to $f:Y\to Y'$ and we enrich it to $(f_h,\alpha_h):(Y,P,r)\to (Y'',P'',r'')$ by setting $\alpha_h(\bar x):=\{y|\deg y\leq \deg x\}.$ Now one can see that $(f,\alpha)$ factors as
	$$(Y,P,r)\into (T(f_h),T(\alpha_h),r)\to (Y',P',r').$$
\end{proof}

\begin{Lemma} The degenerate inclusion induces a weak equivalence
	$$h\S_\bullet \M(\X[0])\to h\S_\bullet\M(\X).$$
\end{Lemma}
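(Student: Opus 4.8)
After realizing the $h$- and $\S_\bullet$-directions, the degenerate inclusion is the realization in the $\Delta^\bullet$-direction of the canonical map of simplicial spaces from the constant simplicial space $c\bigl(h\S_\bullet\M(\X[0])\bigr)$ to $[k]\mapsto h\S_\bullet\M(\X[k])$; its component in simplicial degree $k$ is induced by the thickening functor
$$\mathrm{thick}_k\colon\M(\X[0])\longrightarrow\M(\X[k]),\qquad (Y,P,r)\longmapsto(Y\times\Delta^k,\,P,\,r\times\id),$$
that is, by pullback of $\Delta^0$-families of cell complexes along the collapse $\Delta^k\to\Delta^0$. By the realization lemma for simplicial spaces (all spaces in sight are realizations of nerves, hence good), it is enough to show that $\mathrm{thick}_k$ induces a weak equivalence on $h\S_\bullet$ for every $k$.

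I would fix a vertex $v\in\Delta^k$ and consider the functor $\mathrm{res}_v\colon\M(\X[k])\to\M(\X[0])$ restricting a $\Delta^k$-family to its fiber over $v$. For any object $(Y,P,r)$ the projection $Y\to\Delta^k$ is built from $X\times\Delta^k$ by fiberwise attachment of the cells $I^{\deg p}\times\Delta^k$, hence is a fibration over the contractible base $\Delta^k$; Dold's theorem then shows both that $\mathrm{res}_v$ preserves $h$-equivalences (so $\mathrm{res}_v$ is exact, is compatible with the mapping cylinder functors, and satisfies $\mathrm{res}_v\circ\mathrm{thick}_k=\id$) and that it reflects $h$-equivalences, a morphism of $\M(\X[k])$ being a homotopy equivalence precisely when its restriction over $v$ is. It therefore suffices, via Waldhausen's approximation theorem \cite{zbMATH03927168}, to check the approximation property for $\mathrm{res}_v$: given $(Y,P,r)$ and a morphism $g\colon Y_v\to Z_0$ in $\M(\X[0])$, factor $g$ as $Y_v\cof T(g)\we Z_0$ through its mapping cylinder, and then attach the cells of $T(g)$ not already in $Y_v$ to all of $Y$ over $\Delta^k$, using a fiber homotopy trivialization of $Y\to\Delta^k$ to spread the attaching maps coherently over $\Delta^k$; this yields a cofibration $(Y,P,r)\cof(Y',P',r')$ in $\M(\X[k])$ whose restriction over $v$ is $Y_v\cof T(g)$, providing the required factorization of $g$ through $\mathrm{res}_v(Y')=T(g)\we Z_0$.

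The step I expect to be the real obstacle is this last construction: one must verify that the coherent thickening of the mapping-cylinder cells is realized by honest morphisms of $\M(\X[k])$, i.e. that the new attaching maps are compatible with the $\Lambda P$-structure and with the retraction to $X\times\Delta^k$ and not merely up to homotopy, which is exactly the point at which the contractibility of $\Delta^k$ (in the guise of the fiber homotopy triviality of every $Y\to\Delta^k$) enters. A variant would replace the appeal to the approximation theorem by constructing directly, from the linear contraction of $\Delta^k$ onto $v$, a natural zig-zag of $h$-equivalences between $\mathrm{thick}_k\circ\mathrm{res}_v$ and the identity; but the trivialization needed is not natural in $(Y,P,r)$, so this route is no easier. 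Combined with the preceding lemma, the statement gives $Kh\M(\X)\simeq A(X)$, as needed to continue the identification of $Kh\E(\X)$.
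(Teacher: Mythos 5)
Your argument is essentially the paper's: reduce to each fixed simplicial degree $k$ and then apply Waldhausen's approximation theorem, with the geometric input in both cases being that the projections $Y\to\Delta^k$ (and $Z^B\to\Delta^k$) are fibrations over a contractible base, so that mapping-cylinder cells constructed over one vertex can be spread coherently over all of $\Delta^k$. The only tactical difference is that the paper checks the approximation property for the degeneracy functor $S\colon\M(\X[0])\to\M(\X[k])$ itself (restrict a morphism out of a degenerate object to the first vertex, form its mapping cylinder, and cellularly expand back over $\Delta^k$), whereas you check it for the vertex-restriction functor $\mathrm{res}_v$ and conclude because the degeneracy is a section of it; the delicate step you flag --- realizing the thickened cylinder cells by honest morphisms compatible with the $\Lambda P$-structure and the retraction --- is exactly the ``cellular expansion'' the paper performs.
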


\begin{proof} It is enough to show 
	$$h\S_\bullet \M(\X[0])\to h\S_\bullet\M(\X[k])$$
for all $k.$ Call the degeneracy operator $S:\M(\X[0])\to \M(\X[k]).$ We will use the approximation theorem again and show that $S$ satisfies the approximation property.

Let $(f,\alpha):(Y,P,r)\to (Z,Q,s)$ be a morphism in $\M(\X[k])$ where $(Y,P,r)$ is degenerate. Denote the restriction to the first vertex by $(f_0,\alpha_0).$ Let $(T(f_0), T(\alpha_0))$ be the mapping cone of $(f_0,\alpha_0).$ Then $(f,\alpha)$ factors as 
	$$(Y_0,P_0,r_0)\into (T(f_0), T(\alpha_0),r)\stackrel{\simeq}{\longrightarrow} (Z_0,Q_0,s_0).$$
Recall that $Y=S(Y_0)=Y_0\times \Delta^k$ and $P=P_0.$ Now we can cellularly expand the construction above to get a homotopy equivalence 
	$$h:T(f\times \Delta^k)\to Z.$$
This is rooted in the fact that any $Z^B\to \Delta^k$ is a Serre fibration. Further notice that $(T(f\times \Delta^k),T(\alpha),r)\in S(\M(\X[k]))$ and this concludes the proof that $S$ has the approximation property. 
\end{proof}

\begin{Lemma} The simplicial forgetful functor induces a homotopy equivalence
	$$h\S_\bullet \E(\X[k])\to h\S_\bullet\M(\X[k]).$$
\end{Lemma}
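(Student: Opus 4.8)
The plan is to deduce this from Waldhausen's approximation theorem \cite{zbMATH03927168} applied to the forgetful functor $F\colon\E(\X[k])\to\M(\X[k])$, following the pattern of the preceding lemmas in this appendix. First I would verify that $F$ is exact. On objects it merely discards the choice of identified expansion pairs; a morphism $(\alpha,f)$ of $\E(\X[k])$ is sent to the $\M(\X[k])$-morphism whose poset part is the $\vee$-preserving map $\Lambda P\to\Lambda P'$ induced by $\alpha$ and whose space part is the same $f$. In this description $F$ visibly carries cofibrations to cofibrations, preserves pushouts along cofibrations and the zero object, and preserves weak equivalences. Both categories carry cylinder functors satisfying the cylinder axiom (constructed above); the two differ only in the expansion-pair bookkeeping ($\Sigma\delta P$ versus $\Sigma P$), which is harmless here since the approximation theorem only needs these cylinder functors to exist. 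Moreover $F$ reflects weak equivalences essentially by definition: a morphism of $\E(\X[k])$ is an $h$-equivalence precisely when its underlying map of spaces is a homotopy equivalence, which is exactly the condition for its image in $\M(\X[k])$ to be a weak equivalence, and the saturation axiom for $h$-equivalences on both sides is clear.

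The real work is the approximation property: given $(Y,P,r)\in\E(\X[k])$ and a morphism $(f,\alpha)\colon(Y,P,r)\to(Z,Q,s)$ in $\M(\X[k])$, I must factor it as a cofibration $(Y,P,r)\cof(Y',P',r')$ in $\E(\X[k])$ followed by a weak equivalence $(Y',P',r')\we(Z,Q,s)$ in $\M(\X[k])$. I would first reduce to the case that $(Z,Q,s)$ is an honest cubical relative cell complex: arguing as in the CW-approximation lemma above, $Z$ may be replaced inside $\M(\X[k])$ by a cellular model without changing its weak-equivalence type. Then $\Delta^k$-parametrized cellular approximation --- legitimate because each $Z^B\to\Delta^k$ is a Serre fibration, as was already exploited above --- lets me homotope $f$ rel $X\times\Delta^k$, through maps respecting the retraction and the filtration by closed subsets, to a map $f_c$ carrying each cell of $Y$ into a cell of $Z$, hence inducing a genuine order-preserving poset map $\alpha_c\colon P\to Q$. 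I then take $(Y',P',r')$ to be the $\E$-mapping cylinder of $(f_c,\alpha_c)$: the space $Y'=T(f_c)$ is the reduced topological mapping cylinder, which carries a canonical cubical relative cell structure with cell poset $T(\alpha_c)=P\vee Q\vee\Sigma\delta P$ (a prism cell $\sigma\times I$ acquiring degree $\deg\sigma+1$), and $r'$ is the induced retraction. The front-face inclusion $(Y,P,r)\cof(Y',P',r')$ is an order-preserving monomorphism covered by the inclusion of a parametrized subcomplex, hence a cofibration in $\E(\X[k])$, while the mapping-cylinder projection $(Y',P',r')\to(Z,Q,s)$ is a fiberwise deformation retraction, hence an $h$-equivalence, i.e.\ a weak equivalence in $\M(\X[k])$; the triangle commutes up to the chosen homotopy $f\simeq f_c$. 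Waldhausen's approximation theorem then yields $h\S_\bullet\E(\X[k])\simeq h\S_\bullet\M(\X[k])$.

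The hard part will be this last step, and within it the requirement that the cellular approximation be simultaneously parametrized over $\Delta^k$, compatible with $r$ and with the closed-subset filtration, and \emph{strongly} cellular --- meaning it induces an actual poset map $P\to Q$, not merely a $\vee$-preserving map $\Lambda P\to\Lambda Q$ --- since the latter is what makes the $\E$-mapping cylinder well defined. The cleanest remedy is subdivision: after a cubical subdivision of $Z$ (which does not alter its image in $\M(\X[k])$) one may assume $f_c$ sends each cell of $Y$ onto a subcomplex of $Z$, replacing $Q$ by the subdivided poset. One then still has to check that the degree condition ``no cell attached to a cell of equal or higher degree'' and the expansion-pair structure of $P$ survive the mapping-cylinder and subdivision operations, but this is routine bookkeeping once the cubical structures are fixed.
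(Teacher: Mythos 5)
Your overall frame --- apply Waldhausen's approximation theorem to the forgetful functor and verify the approximation property by factoring a morphism $(f,\alpha)\colon\epsilon_k(Y,P,r)\to(Z,Q,s)$ as an $\E$-cofibration followed by an $h$-equivalence --- is the same as the paper's. But at the key step you take a wrong turn, and the place you yourself flag as ``the hard part'' is a genuine gap. You assume that the intermediate object must be the \emph{$\E$-mapping cylinder} of $(f,\alpha)$, which forces you to first make $f$ cellular; this sends you into parametrized cellular approximation, compatibility with the retraction and the closed-subset filtration, and cubical subdivision, none of which you actually carry out and none of which is routine here. In particular, objects of $\M(\X[k])$ are cell complexes attached according to a \emph{poset} order, not in order of degree, so the classical cellular approximation theorem does not apply to them directly; and subdividing $Z$ changes the poset $Q$, hence changes the object of $\M(\X[k])$ (the poset is part of the data), so the claim that subdivision ``does not alter its image'' is false as stated.

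The missing idea is that no cellularity of $f$ is needed at all, because the factorization only requires the \emph{cofibration} to lift to $\E(\X[k])$; the weak equivalence may live entirely in $\M(\X[k])$, whose mapping cylinder $(T(f),T(\alpha),r'')$ with $T(\alpha)=P\vee Q\vee\Sigma P$ is defined for an arbitrary morphism precisely because $\M$-morphisms only track $\vee$-preserving maps of closed subsets. The paper forms this $\M$-cylinder, lets $A\subset T(\alpha)$ be the closed subset obtained by deleting the suspended expansion pairs (so $A=P\vee Q\vee\Sigma\delta P$), and observes that $T(f)^A\simeq T(f)\simeq Z$ while the front-face inclusion $\epsilon_k(Y,P,r)\into(T(f)^A,A,r)$ is a subcomplex inclusion over an order-preserving monomorphism of posets and therefore lifts to $\E(\X[k])$. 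That two-line construction replaces your entire cellular-approximation and subdivision apparatus; without it, your argument does not close.
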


\begin{proof} Denote the forgetful functor by $\epsilon_k.$ We will again show that it has the approximation property. Let $\epsilon_k:(Y,P,r)\to (Z,Q,s)$ be a morphism in $\M(\X[k]).$ Let $A$ be the closed subset of $T(\alpha)$ given by deleting all expansion pairs from $\Sigma P.$ Then $T(f)^A\simeq T(f)\simeq Z$ and the cofibration 
	$$\epsilon_k(Y,P,r)\into (T(f)^A,A,r)$$
lifts to $\E(\X[k]).$
\end{proof}

Altogether we have shown

\begin{Theorem} There is a weak equivalence
	$$Kh\E(\X)\simeq A(X).$$
\end{Theorem}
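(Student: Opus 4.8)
The plan is to concatenate the three lemmas of this subsection. Since $Kh\E(\X)$ denotes $\Omega\,|h\S_\bullet\E(X^{\Delta^\bullet})|$, where the outer realization runs over the external simplicial coordinate $\bullet$, it suffices to produce, compatibly in that coordinate, the chain of weak equivalences
\[
	h\S_\bullet\E(\X[k])\ \we\ h\S_\bullet\M(\X[k])\ \xleftarrow{\,\sim\,}\ h\S_\bullet\M(\X[0])\ \simeq\ h\S_\bullet\R_f(X),
\]
then realize over $\bullet$, pass to loop spaces, and identify the last term with $A(X)$ as in the first lemma. All three equivalences are instances of Waldhausen's approximation theorem \cite{zbMATH03927168}, applied to categories equipped with the cylinder functors constructed above, so the work is in checking the approximation property in each case and observing that the verifications are natural in $k$.

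First I would establish the left-hand equivalence. The forgetful functor $\epsilon_k:\E(\X[k])\to\M(\X[k])$ is exact, preserves cylinder functors, and reflects $h$-equivalences (an $h$-equivalence on either side is just a homotopy equivalence of the underlying spaces), so by the approximation theorem it is enough to verify the lifting property. Given a morphism $\epsilon_k(Y,P,r)\to(Z,Q,s)$ in $\M(\X[k])$, one forms its mapping cylinder $(T(f),T(\alpha),r'')$ in $\M$, lets $A\subset T(\alpha)$ be the closed subset obtained from $\Sigma P$ by deleting all expansion pairs, notes that $T(f)^A\into T(f)$ is a deformation retract so $T(f)^A\simeq Z$, and observes that the cofibration $\epsilon_k(Y,P,r)\into(T(f)^A,A,r'')$ lifts to $\E(\X[k])$ because its cells are attached in order of degree. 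This is exactly the last lemma, and the construction is manifestly functorial in $k$.

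Next I would treat the degeneracy map. The operator $S:\M(\X[0])\to\M(\X[k])$ is exact and reflects $h$-equivalences (smashing with the contractible factor $\Delta^k$ preserves homotopy equivalences in both directions), so again the content is the lifting property. Given $(f,\alpha):S(Y_0,P_0,r_0)\to(Z,Q,s)$ in $\M(\X[k])$, restrict to the initial vertex to obtain $(f_0,\alpha_0)$, form its mapping cylinder in $\M(\X[0])$, and then expand the construction cellwise over $\Delta^k$; this is legitimate because every $Z^B\to\Delta^k$ is a Serre fibration, which yields a homotopy equivalence $h:T(f\times\Delta^k)\to Z$ and places $(T(f\times\Delta^k),T(\alpha),r'')$ in the image of $S$. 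The resulting factorization
\[
	S(Y_0,P_0,r_0)\ \into\ (T(f\times\Delta^k),T(\alpha),r'')\ \we\ (Z,Q,s)
\]
witnesses the approximation property. The right-hand equivalence is the first lemma: one introduces the subcategory $\M(\X[0])^{CW}$ of CW objects and checks, once more with the approximation theorem, that $\M(\X[0])^{CW}\to\R_f(X)$ and $\M(\X[0])^{CW}\to\M(\X[0])$ are both weak equivalences, the latter by using CW- and cellular approximation to factor an arbitrary morphism through a mapping cylinder of CW objects.

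Concatenating the three steps gives $Kh\E(\X)\simeq Kh\M(\X)\simeq Kh\M(\X[0])\simeq A(X)$. The hard part, and the reason the three lemmas are phrased exactly as they are, is arranging the approximation-theorem verifications so that they are simultaneously valid and natural in the external simplicial variable $k$; concretely, the cellwise-over-$\Delta^k$ mapping cylinder in the middle step must be performed compatibly with all face and degeneracy maps, which is precisely what the Serre fibration observation guarantees. Everything else is routine bookkeeping — verifying that the mapping cylinders defined above satisfy Waldhausen's cylinder axiom in each of $\E$, $\M$, and $\M^{CW}$, and that the forgetful and degeneracy functors are exact.
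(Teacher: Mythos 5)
Your proposal is correct and follows the paper's own argument: the theorem is obtained by concatenating the three lemmas of this subsection, each established via Waldhausen's approximation theorem using the mapping cylinder functors on $\M(\X[k])$ and $\E(\X[k])$, exactly as you describe. The chain $h\S_\bullet\E(\X[k])\we h\S_\bullet\M(\X[k])\xleftarrow{\sim}h\S_\bullet\M(\X[0])\simeq h\S_\bullet\R_f(X)$ and the role of the Serre fibration observation in the degeneracy step match the paper's proof.
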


\bibliography{mybib}{}
\bibliographystyle{plain}

\end{document}